\documentclass[12pt]{amsart}
\usepackage{amssymb}
\usepackage{amsthm}
\usepackage{amsmath}
\usepackage{enumerate}
\usepackage{graphicx}
\usepackage{mathrsfs}
\usepackage{color}

\usepackage[latin1]{inputenc}

\newcommand{\D}{\mathcal D}

\newcommand{\Map}{\mathrm{Map}}
\newcommand{\Out}{\mathrm{Out}}
\newcommand{\Aut}{\mathrm{Aut}}

\theoremstyle{plain}
\newtheorem{theorem}{Theorem}[section]
\newtheorem{theorem*}{Theorem}
\newtheorem{proposition}[theorem*]{Proposition}

\newtheorem{lemma}[theorem]{Lemma}

\newtheorem{prop}[theorem]{Proposition}

\newtheorem{cor}[theorem]{Corollary}

\theoremstyle{definition}
\newtheorem{defi}[theorem]{Definition}

\theoremstyle{remark}
\newtheorem{remark}[theorem]{Remark}

\addtolength{\textwidth}{4cm}
\addtolength{\oddsidemargin}{-2cm}
\addtolength{\evensidemargin}{-2cm}

\begin{document}
\title{Spheres and projections for $\mathrm{Out}(F_n)$}
\author{Ursula Hamenst\"adt and Sebastian Hensel}
\date{April 18, 2014}
\thanks{AMS subject classification: 20F65, 57M07.\\  
  Both authors are partially
  supported by the Hausdorff Center Bonn
  and ERC grant 10160104}
\begin{abstract}
  The outer automorphism group $\mathrm{Out}(F_{2g})$ of a free group
  on $2g$ generators naturally contains the mapping class group of a
  punctured genus $g$ surface $S_{g,1}$ as a subgroup. 
  We define a ``subsurface projection'' of the sphere complex 
  of the connected sum of $n$ copies of $S^1\times S^2$ into 
  the arc complex of $S_{g,1}$. Using this, we
  show that $\Map(S_{g,1})$ is a Lipschitz retract of $\mathrm{Out}(F_{2g})$.
  We use another ``subsurface projection'' to give a simple proof 
  of a result of Handel and Mosher \cite{HM10}
  stating that 
  stabilizers of conjugacy classes of free splittings and corank
  $1$ free factors in a free group $F_n$ are Lipschitz retracts of
  $\mathrm{Out}(F_n)$. 
\end{abstract}
\address{\hskip-\parindent
  Ursula Hamenst\"adt\\ Mathematisches Institut der Universit\"at Bonn \\
  Endenicher Allee 60\\
  D-53115 Bonn, Germany}
\email{ursula@math.uni-bonn.de}
\address{\hskip-\parindent
  Sebastian Hensel\\
  The University of Chicago\\
  Department of Mathematics\\
  5734 South University Avenue\\
  Chicago, Illinois 60637-1546\\
  USA}
\email{hensel@math.uchicago.edu}
\maketitle

\section{Introduction}
The \emph{mapping class group} $\Map(S_{g,m})$ of a closed surface 
$S_{g,m}$ of genus $g$ with $m\geq 0$ punctures
is the quotient of the group of 
homeomorphisms of $S_{g,m}$ by the connected component of the
identity. The classical Dehn-Nielsen-Baer
theorem identifies $\Map(S_{g,m})$ with a subgroup of 
the outer automorphism group $\mathrm{Out}(\pi_1(S_{g,m},p))$ of
the fundamental group of $S_{g,m}$ (compare e.g. \cite[Theorem~8.8]{FM11}).
This subgroup consists of all elements which preserve the set of
conjugacy classes of the puncture parallel curves. 

The mapping class group 
acts properly and cocompactly on the
so-called \emph{marking complex} of $S_{g,m}$ 
\cite{MM00} whose vertices are certain finite collections of simple
closed curves. 
In particular, $\Map(S_{g,m})$ is finitely generated, and 
it admits a family of left invariant metrics so that the orbit map for the 
action on the marking complex is a quasi-isometry.

Given an essential subsurface $F$ of $S_{g,m}$, there is 
a simple way to project a simple closed 
curve or a marking of $S_{g,m}$ to a simple closed curve  
or a marking of $F$. This construction determines
a natural coarse Lipschitz retraction
of the marking complex of $S_{g,m}$ onto the 
marking complex of $F$ which is equivariant with respect to 
the action of $\Map(S_{g,m})$ and the stabilizer of $F$ in 
$\Map(S_{g,m})$. A \emph{coarse Lipschitz retraction} of a metric
space $X$ onto a subspace $Y$ is a self-map $p:X\to Y$ which is coarsely
Lipschitz and which restricts to the identity map of $Y$.

Viewing marking complexes as geometric models for the mapping class groups
yields among others a simple construction of 
a Lipschitz retraction of the mapping class group $\Map(S_g)$ onto the
mapping class group $\Map(S_{g-1}^1)$ of a surface with a boundary
component (namely, the subgroup consisting of all elements
which fix a subtorus with connected boundary pointwise up to isotopy
\cite{MM00}). Algebraically, this means that one has a Lipschitz
retraction of the \emph{outer} automorphism group of the fundamental group of a
surface of genus $g\geq 2$ onto the automorphism group of 
the fundamental group of a surface of genus $g-1$.
Here, and in the sequel, we interpret finitely generated groups as
metric spaces by choosing generating sets and equipping them with word norms.

\medskip
The outer automorphism group ${\rm Out}(F_n)$ of the 
free group with $n\geq 2$ generators also admits a simple 
topological model. 
Namely, let $W_n$ be the connected sum of $n$ copies
of $S^1\times S^2$. By a theorem of Laudenbach \cite{L74}, 
${\rm Out}(F_n)$ is a cofinite quotient of the group 
of all isotopy classes of orientation preserving
homeomorphisms of $W_n$. 

Define a \emph{simple sphere system}
in $W_n$ to consist of a collection of essential embedded
spheres which decompose $W_n$ into a union of balls. 
The sphere system graph is the locally finite graph whose 
vertices are simple sphere systems up to isotopy and where two such
sphere systems are connected by an edge if they
can be realized disjointly \cite{HV96,AS09}. One analog of a "subsurface"
is a subset of $W_n$ which is a component $H$ of 
the complement of a collection of disjointly embedded essential spheres. We now
can attempt to define a subsurface projection 
by a surgery
procedure paralleling the construction for surfaces.

However, unlike in the case of simple closed curves on 
a surface, there does not seem to be a canonical way to define
such a projection for sphere systems, and the best we can do is defining 
a projection into the manifold $H_0$ obtained from $H$ 
by filling the boundary spheres with balls. 
This construction results among others in 
a coarsely equivariant projection of ${\rm Out}(F_n)$ into 
the outer automorphism group of the fundamental group of $H_0$,
however we do not obtain a projection  
into the automorphism group
(see \cite{BF12,SS12} for a recent account on such a construction).

If we work with the automorphism group 
${\rm Aut}(F_n)$ of $F_n$ instead then
this problem does not arise (compare \cite{HM10}). 
This observation can be used to 
give a topological proof of a result of Handel and
Mosher \cite{HM10}. For its formulation, note that for
every $n\geq 2$ the subgroup of ${\rm Aut}(F_n)$ of all 
elements which preserve a free splitting 
$F_n=\mathbb{Z}*F_{n-1}$ is naturally isomorphic to 
${\rm Aut}(F_{n-1})$.

\begin{theorem*}\label{aut}
There is a coarsely equivariant 
Lipschitz retraction \[{\rm Aut}(F_n)\to 
{\rm Aut}(F_{n-1}).\]
\end{theorem*}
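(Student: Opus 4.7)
The plan is to use the topological model from the introduction: fix a basepoint (equivalently, a marked ball) in $W_n$, so that the basepoint-preserving mapping class group of $W_n$ is, up to a finite kernel, the group $\Aut(F_n)$, and acts properly cocompactly on a pointed sphere system graph quasi-isometric to $\Aut(F_n)$. Realize the splitting $F_n=\ZZ*F_{n-1}$ by a non-separating embedded sphere $\Sigma$ in $W_n$ disjoint from the basepoint. Cutting $W_n$ along $\Sigma$ and filling in with a ball the one boundary sphere not containing the basepoint produces a manifold $H_0$ diffeomorphic to $W_{n-1}$ which inherits the basepoint. The basepoint-preserving mapping classes stabilizing $\Sigma$ restrict to basepoint-preserving mapping classes of $H_0$, and this restriction descends to an isomorphism (up to finite kernel) between the stabilizer in $\Aut(F_n)$ of the splitting and $\Aut(F_{n-1})$.

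Next I would define the projection $\pi_\Sigma$. Given a sphere system $S$ in $W_n$, isotope it to meet $\Sigma$ minimally, then surger the components of $S$ that cross $\Sigma$ along the innermost disks cut out on $\Sigma$. This replaces each crossing component by a collection of spheres in the two sides of $\Sigma$; discard the side not containing the basepoint and push the remaining pieces into $H_0$ by the canonical capping ball. The output is a sphere system in $H_0$. Since the basepoint is preserved and the capping ball is fixed once and for all, the inner-automorphism ambiguity that appears in the $\Out$ case (as noted in the introduction) is eliminated: the construction lands in the sphere system graph of pointed $H_0$, which is quasi-isometric to $\Aut(F_{n-1})$. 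Equivariance under the stabilizer of $\Sigma$ is immediate, as such mapping classes commute with every step of the surgery.

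It remains to show that $\pi_\Sigma$ is coarsely Lipschitz, and this is where the main work lies. The key step is to verify that if $S$ and $S'$ are sphere systems in $W_n$ that can be realized disjointly, then one can simultaneously isotope $S\cup S'$ into minimal position with $\Sigma$ while keeping $S$ and $S'$ disjoint, so the surgeries produce sphere systems in $H_0$ that are again disjoint (possibly after coarsely bounded further surgery). Combined with bounded geometry of the sphere system graph, this makes $\pi_\Sigma$ send adjacent vertices to uniformly close vertices, and restricting $\pi_\Sigma$ to the orbit of a fixed pointed sphere system gives the desired coarsely equivariant Lipschitz retraction $\Aut(F_n)\to\Aut(F_{n-1})$.

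The main obstacle is the coarse well-definedness and Lipschitz estimate for $\pi_\Sigma$: one must argue that different minimal-position representatives of a sphere system produce surgery images that differ by a bounded amount in the sphere system graph of $H_0$, and that the surgery of a disjoint pair can be performed compatibly. The crucial feature that makes this work for $\Aut$ and not for $\Out$ is the marked basepoint, which rigidifies the capping-ball construction and removes the ambiguity by inner automorphisms of $\pi_1(H_0)$ that otherwise obstructs lifting the projection from $\Out(F_{n-1})$ to $\Aut(F_{n-1})$.
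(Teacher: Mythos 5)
Your high-level plan --- model $\Aut(F_n)$ on a basepointed sphere-system graph in $W_n$, then project a sphere system into the stabilizer of a fixed sphere by a canonical surgery whose choices are rigidified by the basepoint --- is the same as the paper's. But the way you introduce the basepoint is where the argument breaks. You place the basepoint $p$ in the complement of a \emph{non-separating} sphere $\Sigma$. Cutting $W_n$ along a non-separating sphere yields a single connected manifold $N$ with two boundary $2$-spheres, and $p$ lies in its interior; so there is no well-defined ``boundary sphere not containing the basepoint'', and filling one boundary sphere of $N$ gives $W_{n-1}$ with a ball removed rather than $W_{n-1}$. More seriously, your projection $\pi_\Sigma$ needs, for each intersection circle $\alpha\subset\Sigma\cap S$, a canonical choice of which of the two disks $\alpha$ bounds on $\Sigma$ to use for capping, and with $p$ off $\Sigma$ nothing in your setup singles one out. ``Discard the side not containing the basepoint'' therefore does not determine the surgery, so equivariance and well-definedness of $\pi_\Sigma$ are not established.

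The paper resolves exactly this point by placing the basepoint $p$ \emph{on} the sphere $\sigma$: then for each intersection circle $\alpha\subset\sigma$ exactly one of the two disks it bounds on $\sigma$ misses $p$, and this distinguished ``closing disk'' is used to cap every boundary component of every sphere piece. With the basepoint on $\sigma$ one gets the crucial nesting lemma (any two closing disks are disjoint or properly nested), which is what lets one realize all the capped spheres disjointly, and one sees that the assignment of closing disks depends only on the individual sphere carrying $\alpha$, not on the whole system --- so $\Sigma\subset\Sigma'$ gives $\pi_\sigma(\Sigma)\subset\pi_\sigma(\Sigma')$, yielding an exactly $1$-Lipschitz equivariant retraction with no coarse slack. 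Your ``possibly after coarsely bounded further surgery'' fudge is a symptom of the missing canonicity; to repair the proof you need to move the basepoint onto the sphere (matching $\mathcal{S}(W_{n,1},\sigma)$ in the paper) and rebuild the surgery using closing disks and their nesting.
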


If we give up on the idea of an \emph{equivariant} 
retraction 
of ${\rm Out}(F_n)$ onto the stabilizer of the conjugacy class of 
a free splitting of $F_n$ then 
we can make consistent choices of basepoints
and use this to 
give a simple topological proof of the following result of Handel and Mosher
\cite{HM10}.

\begin{theorem*}\label{out}
  \begin{enumerate}[i)]
  \item The stabilizer of the conjugacy class of a free splitting $F_n
    = G * H$ is a coarse Lipschitz retract of $\mathrm{Out}(F_n)$.
  \item Let $G < F_n$ be a free factor of corank $1$. Then the
    stabilizer of the conjugacy class of $G$ is a coarse Lipschitz
    retract of 
    $\mathrm{Out}(F_n)$.
  \end{enumerate}
\end{theorem*}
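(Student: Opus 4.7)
The plan is to work entirely in Laudenbach's topological model for $\mathrm{Out}(F_n)$, using the locally finite, $\mathrm{Out}(F_n)$-cocompact graph of simple sphere systems in $W_n$ as a quasi-isometric model for the group. A free splitting $F_n = G * H$ corresponds to an essential sphere $\Sigma \subset W_n$, and a corank one free factor corresponds to such a sphere together with the choice of a complementary component. In both cases, the stabilizer of the conjugacy class of the splitting or factor coincides, up to finite index, with the stabilizer $\mathrm{Stab}(\Sigma)$ of the isotopy class of $\Sigma$. It therefore suffices to build a coarse Lipschitz retraction of the sphere graph of $W_n$ onto the $\mathrm{Stab}(\Sigma)$-invariant subgraph $\mathcal{S}_\Sigma$ consisting of simple sphere systems that contain $\Sigma$, since $\mathrm{Stab}(\Sigma)$ acts on $\mathcal{S}_\Sigma$ cocompactly with finite point stabilizers and the orbit map is a quasi-isometry.

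The geometric input is a subsurface-style projection $\pi_\Sigma$ defined by surgery, in the spirit of the arc-complex projection from the sphere graph that is the main technical tool of the paper. For a simple sphere system $\sigma$ put in normal position with respect to $\Sigma$, I would perform disk-swap surgeries along innermost disks of intersection to obtain a sphere system $\pi_\Sigma(\sigma)$ disjoint from $\Sigma$, and then extend $\Sigma \cup \pi_\Sigma(\sigma)$ to a simple sphere system in $\mathcal{S}_\Sigma$. This construction is $\mathrm{Stab}(\Sigma)$-equivariant by naturality of the surgery, is coarsely the identity on $\mathcal{S}_\Sigma$ because elements of $\mathcal{S}_\Sigma$ are already disjoint from $\Sigma$, and lands in the correct submanifold model because capping off the two sides of $\Sigma$ with balls identifies $\pi_\Sigma(\sigma)$ with a sphere system in the complementary $W_{n-1}$ (non-separating case) or $W_k \sqcup W_{n-k}$ (separating case).

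With the projection in hand, I would promote it to a group-level retraction via the basepoint trick advertised in the introduction. Fix a basepoint $\sigma_0 \in \mathcal{S}_\Sigma$, send $\phi \in \mathrm{Out}(F_n)$ to $\phi \sigma_0$ in the sphere graph, apply $\pi_\Sigma$, and then choose $r(\phi) \in \mathrm{Stab}(\Sigma)$ so that $r(\phi)\sigma_0$ is within uniformly bounded distance of $\pi_\Sigma(\phi\sigma_0)$ in $\mathcal{S}_\Sigma$. Such a choice exists and is coarsely well defined because $\mathrm{Stab}(\Sigma)$ acts cocompactly with finite point stabilizers on $\mathcal{S}_\Sigma$. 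Equivariance of $\pi_\Sigma$ under $\mathrm{Stab}(\Sigma)$ then forces $r$ to be coarsely Lipschitz and coarsely the identity on $\mathrm{Stab}(\Sigma)$, with the failure of full $\mathrm{Out}(F_n)$-equivariance absorbed into the bounded error introduced by the basepoint choice. Part (ii) differs from part (i) only by refining $\mathrm{Stab}(\Sigma)$ to a finite-index subgroup that preserves a side of $\Sigma$, and the same construction applies verbatim.

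The main obstacle is proving that the surgery projection $\pi_\Sigma$ is coarsely Lipschitz, i.e.\ that if two simple sphere systems $\sigma, \sigma'$ differ by swapping a single sphere then $\pi_\Sigma(\sigma)$ and $\pi_\Sigma(\sigma')$ are a uniformly bounded distance apart in $\mathcal{S}_\Sigma$. Two spheres of $W_n$ in minimal position can intersect in many circles, and the innermost-disk surgery depends on auxiliary choices; one must show that the output is well defined up to bounded ambiguity and that a single sphere replacement in $\sigma$ can only alter $\pi_\Sigma(\sigma)$ boundedly. I would tackle this by a disk-exchange argument modelled on the one used earlier in the paper for the arc complex projection, applied inside the handlebody obtained by cutting $W_n$ along a maximal simple sphere system extending $\Sigma$, where only finitely many local surgery patterns appear and so the ambiguity is uniformly controlled.
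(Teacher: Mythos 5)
Your proposal identifies the correct high-level strategy (model $\mathrm{Out}(F_n)$ by the sphere system graph of $W_n$, identify the stabilizer with the stabilizer of a sphere $\sigma$ via Lemma~\ref{lem:factor-stabilizers-are-sphere-stabilizers}-style reasoning, and produce a surgery projection), but it has a real gap at the heart of the argument, and it is precisely the gap that the paper spends most of its effort circumventing. You assert that the surgery projection $\pi_\Sigma$ obtained by capping or disk-swapping along innermost intersections is ``$\mathrm{Stab}(\Sigma)$-equivariant by naturality of the surgery'' and then, acknowledging that the surgery involves auxiliary choices, claim the resulting ambiguity is ``uniformly controlled'' by looking at local surgery patterns inside the handlebody obtained by cutting along a maximal system. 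This last step is not justified and, as the introduction of the paper emphasizes, is expected to be false: each intersection circle of a sphere piece with $\sigma$ bounds \emph{two} disks on $\sigma$, the number of such circles is unbounded, and the global nesting of disk choices can produce homotopically wildly different spheres. There is no canonical, or even boundedly ambiguous, choice of closing disks in $W_n$; this is exactly why the authors write that ``there does not seem to be a canonical way to define such a projection'' and why they explicitly give up on an equivariant retraction at the $\mathrm{Out}(F_n)$ level.

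The paper's resolution is structurally different from anything in your outline. They pass to the basepointed manifold $W_{n,1}$, where the base point picks out, for every intersection circle, the unique closing disk on $\sigma$ not containing the base point. This makes the capping surgery canonical (Lemmas on nested closing disks and Lemma~\ref{lem:spheres-admissible-implies-embedded}), yields an honest $1$-Lipschitz equivariant retraction onto $\mathcal{S}(W_{n,1},\sigma)$ (Theorem~\ref{thm:basept-sphere-retract}), and corresponds group-theoretically to working in $\Aut(F_n)$ rather than $\Out(F_n)$. To get back down, they invoke Mosher's theorem that $\Out(F_n)\to\Aut(F_n)$ admits a quasi-isometric section $s$ (this is where hyperbolicity of $F_n$ enters), precompose with a cleanup map $\mathcal{R}$ that removes intersections which are superfluous only in $W_n$ but not in $W_{n,1}$ (Lemmas~\ref{lem:superfluous-contains-p}--\ref{lem:disjoint-gets-disjoint}), and set $r = f\circ\pi_\sigma\circ\mathcal{R}\circ s$. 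The use of $s$ is why the resulting retraction is \emph{not} equivariant, only coarsely Lipschitz, matching the statement. Your ``basepoint trick'' is a different (and standard) device --- choosing a base vertex $\sigma_0$ in the model --- and does not substitute for the genuine 3-manifold basepoint in $W_{n,1}$ nor for Mosher's section. Without those two ingredients, the crucial well-definedness of the projection is unproven, and I do not see how your proposed disk-exchange argument would supply it.
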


There also are stabilizers of ``subsurfaces'' 
for which we can construct equivariant Lipschitz retractions
in complete analogy to the case of surfaces. Namely,  
for $g\geq 1$ the fundamental group of a 
surface $S_{g,1}$ of genus $g\geq 1$ with 
one puncture is the free group $F_{2g}$. In particular, 
the mapping class group $\Map(S_{g,1})$ 
is a subgroup of ${\rm Out}(F_{2g})$. We define a natural 
subsurface projection of spheres in $W_{2g}$ onto arcs in $S_{g,1}$ 
and use this fact to construct an equivariant 
Lipschitz retraction 
of the sphere system graph of $W_{2g}$ to 
the marking graph of $S_{2g,1}$. This leads to the following

\begin{theorem*}\label{map}
  $\Map(S_{g,1})$ is a coarse Lipschitz retract of $\Out(F_{2g})$.
\end{theorem*}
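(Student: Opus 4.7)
The strategy is to reduce Theorem~\ref{map} to the construction of a coarsely Lipschitz, $\Map(S_{g,1})$-equivariant map $\pi$ from the sphere system graph $\mathcal{SG}(W_{2g})$ (a geometric model for $\Out(F_{2g})$) to the Masur--Minsky marking graph $\mathcal{MG}(S_{g,1})$ (a geometric model for $\Map(S_{g,1})$). Equivariance alone forces $\pi$ to coincide with the canonical orbit map on any $\Map(S_{g,1})$-orbit: if $\mu_0 := \pi(\mathcal{S}_0)$ for a basepoint sphere system $\mathcal{S}_0$, then $\pi(\phi \cdot \mathcal{S}_0) = \phi \cdot \mu_0$ for every $\phi \in \Map(S_{g,1})$, which under the quasi-isometry $\Map(S_{g,1}) \simeq \mathcal{MG}(S_{g,1})$ is uniformly close to $\phi$. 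Thus the retraction property is automatic once such a $\pi$ is built.

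The construction proceeds in two stages. Proposition~\ref{prop:arcgraph} supplies the first stage: a coarsely Lipschitz, $\Map(S_{g,1})$-equivariant sphere-to-arc projection $\pi_{\mathrm{arc}}: \mathcal{SG}(W_{2g}) \to \mathcal{A}(S_{g,1})$. Equivariance rests on a preferred embedding $S_{g,1} \hookrightarrow W_{2g}$; one concrete choice is to realize $W_{2g}$ as the double of the handlebody $V_{2g} = S_{g,1} \times [-1,1]$ (which is a handlebody of genus $2g$ since $\chi(S_{g,1} \times I) = 1 - 2g$) and take $S_{g,1} \times \{0\}$, in which case $\Map(S_{g,1}) < \Out(F_{2g})$ is realized by doubling homeomorphisms of the form $\phi \times \id$, preserving the isotopy class of the embedded surface. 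For the second stage, compose $\pi_{\mathrm{arc}}$ with the classical surface subsurface projections $\pi_Y^{\mathrm{surf}}: \mathcal{A}(S_{g,1}) \to \mathcal{C}(Y)$ over all proper essential subsurfaces $Y \subset S_{g,1}$. Because every coordinate factors through the single arc system $\pi_{\mathrm{arc}}(\mathcal{S})$ in $S_{g,1}$, the Behrstock consistency inequalities are inherited from surface theory, and the standard marking-realization theorem produces a marking $\pi(\mathcal{S}) \in \mathcal{MG}(S_{g,1})$, well defined up to bounded distance, whose subsurface projections match $\{\pi_Y^{\mathrm{surf}}(\pi_{\mathrm{arc}}(\mathcal{S}))\}_Y$. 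Coarse Lipschitz control is then inherited from $\pi_{\mathrm{arc}}$ and from surface subsurface projection via the Masur--Minsky distance formula.

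\textbf{Main obstacle.} The delicate point is the passage from the arc projection to a full marking in a coarsely canonical, coarsely Lipschitz way. A cleaner direct alternative that bypasses marking-realization machinery is to extend $\pi_{\mathrm{arc}}(\mathcal{S})$ to a marking by a deterministic recipe: close up arcs via boundaries of regular neighborhoods to obtain a filling curve system, extend to a pants decomposition, and pick canonical transversals. One then verifies that an elementary move between adjacent vertices of $\mathcal{SG}(W_{2g})$ perturbs the resulting marking by a bounded number of elementary moves in $\mathcal{MG}(S_{g,1})$; this ultimately reduces to the bounded change in $\pi_{\mathrm{arc}}$ provided by Proposition~\ref{prop:arcgraph}, together with the standard coarse Lipschitz behavior of the arc-to-marking completion.
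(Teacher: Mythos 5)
Your structural framing is correct and matches the paper's: choose compatible geometric models (the sphere system graph for $\Out(F_{2g})$, an arc-based graph for $\Map(S_{g,1})$), construct a $\Map(S_{g,1})$-equivariant coarse Lipschitz projection, and observe that equivariance together with a compatible basepoint choice makes the retraction property automatic. The doubling embedding $S_g^1 \hookrightarrow W_{2g}$ you describe is also precisely what the paper uses.

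However, there is a genuine gap, and it is exactly where the paper spends all of its effort. You treat Proposition~\ref{prop:arcgraph} as an available black box that supplies the sphere-to-arc projection $\pi_{\mathrm{arc}}$, but in the paper that proposition is a \emph{corollary} of the very machinery that proves Theorem~\ref{map}: both rest on putting the embedded surface $\varphi(S_g^1)$ into a canonical \emph{tight minimal position} with respect to a sphere system $\Sigma$, and proving that the induced arc system $\varphi^{-1}(\Sigma)$ is well-defined up to isotopy (Lemmas~\ref{lem:keeping-minimal}, \ref{lem:existence-tight-pos}, \ref{lem:uniqueness-tight-position}). Invoking the proposition therefore gives no logical reduction; the central difficulty — defining a coarsely canonical intersection of $\varphi(S_g^1)$ with a sphere system, and showing it depends only on the homotopy classes — is entirely unaddressed in your plan. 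Even taking $\pi_{\mathrm{arc}}$ for granted, you would still need to show that the arcs $a(\sigma)$, $\sigma \in \Sigma$, assemble into a \emph{binding} (filling) arc system, which again requires the tight-minimal-position construction rather than the sphere-graph statement alone.

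A secondary point: the detour through the Masur--Minsky marking graph, Behrstock consistency, and the marking realization theorem is unnecessary machinery. The paper works directly with the graph $\mathcal{BL}(S_{g,1})$ of binding arc systems as a geometric model for $\Map(S_{g,1})$, and the map $\Sigma \mapsto \varphi^{-1}(\Sigma)$ (for $\varphi$ in tight minimal position) already lands there and is $1$-Lipschitz by a one-line argument once uniqueness of tight position is in hand. Your "cleaner direct alternative" (close up arcs, extend to a pants decomposition, choose transversals) is closer in spirit, but it still presupposes the canonical arc projection whose construction is the real content of the theorem.
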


%More generally,
%for any number $m\geq 1$, the mapping class group
%of a surface $S$ of genus $g\geq 0$ with $m$ punctures
%and fundamental group $F_n$ embeds onto a subgroup of ${\rm
%  Out}(F_n)$. Using straightforward modifications, the methods developed in
%this article show the analog of Theorem~\ref{map} also for such
%surfaces, but we restrict to the case of one boundary component here
%for ease of exposition. 

There is an analog of Theorem \ref{map} for graphs 
which admit cofinite actions of ${\rm Mod}(S_{g,1})$ and 
${\rm Out}(F_{2g})$, respectively. Namely, 
let $\mathcal{AG}(S_{g,1})$ be the arc graph of $S_{g,1}$.
The vertex set of $\mathcal{AG}(S_{g,1})$ is the set of isotopy classes of
essential embedded 
arcs connecting the puncture of $S_{g,1}$ to itself. Two such
vertices are connected by an edge if the corresponding arcs are
disjoint up to homotopy. The mapping class group $\Map(S_{g,1})$ of a
once-punctured surface acts on $\mathcal{AG}(S_{g,1})$.

Define the sphere graph $\mathcal{SG}(W_{2g})$ of
$W_{2g}$ as the graph whose vertex set is the set of isotopy classes
of embedded essential spheres in $W_{2g}$. Two such vertices are
connected by an edge if the corresponding spheres are disjoint up to
homotopy. As for the curve graph of a surface \cite{MM99} or 
the disc graph of a handlebody \cite{MS10}, 
the sphere graph is hyperbolic of infinite diameter \cite{HM11,HiHo12}. 
The tools developed for the proof of Theorem \ref{map}
yield the following analog of a property of the disc graph \cite{MS10,H11}. 
\begin{proposition}\label{prop:arcgraphs}
  There is a $\Map(S_{g,1})$--equivariant embedding of
  the arc graph $\mathcal{AG}(S_{g,1})$ into the sphere graph
  $\mathcal{SG}(W_{2g})$ whose image is
  an equivariant 
  $1$-Lipschitz retract of $\mathcal{SG}(W_{2g})$.
\end{proposition}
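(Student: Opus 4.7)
The plan is to explicitly define the embedding $\iota : \mathcal{AG}(S_{g,1}) \hookrightarrow \mathcal{SG}(W_{2g})$ using an arc-to-sphere surgery construction, and then to use the sphere-to-arc subsurface projection built elsewhere in the paper to produce the retraction. Fix once and for all an embedding of the compactification $\overline{S}_{g,1}$ into $W_{2g}$ as a two-sided, properly embedded surface whose boundary circle $c$ lies on a fixed non-separating sphere $\Sigma_0$ and divides $\Sigma_0$ into two discs, chosen so that the double of $\overline{S}_{g,1}$ along $c$ is a Heegaard surface for $W_{2g}$. Given an arc $\alpha \in \mathcal{AG}(S_{g,1})$, represent it as a properly embedded arc in $\overline{S}_{g,1}$ with endpoints on $c$, take a band neighborhood $B_\alpha \subset \overline{S}_{g,1}$, and cap off its two boundary arcs lying on $c$ by the unique pair of disjoint sub-discs of $\Sigma_0$ that they cut off; the union is an embedded sphere $\iota(\alpha)$ in $W_{2g}$. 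Routine checks show the construction descends to isotopy classes, is $\Map(S_{g,1})$-equivariant, and produces essential spheres (since an inessential $\iota(\alpha)$ would force $\alpha$ to be homotopically trivial in $\overline{S}_{g,1}$).

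To verify that $\iota$ is $1$-Lipschitz, I take two disjoint arcs $\alpha_1, \alpha_2$ and simultaneously isotope their bands in $\overline{S}_{g,1}$ to be disjoint. The four short sides then form four disjoint arcs on $c$ appearing in a cyclic pattern, and the corresponding pairs of capping sub-discs on $\Sigma_0$ can be chosen nested and hence disjoint, which yields disjoint spheres. For the retraction I use the sphere-to-arc projection $\rho$ already set up in the proof of Theorem~\ref{map}: put a sphere $\sigma$ in minimal position with $\overline{S}_{g,1}$, examine the essential arc components of the intersection $\sigma \cap \overline{S}_{g,1}$, and define $\rho(\sigma)$ to be the isotopy class of a canonically chosen such arc. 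The retraction identity $\rho \circ \iota = \id$ follows from the direct observation that $\iota(\alpha) \cap \overline{S}_{g,1}$ consists of the two long sides of $B_\alpha$, each properly isotopic to $\alpha$.

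The main obstacle is to upgrade the coarse Lipschitz estimate coming out of the proof of Theorem~\ref{map} to an actual $1$-Lipschitz bound for $\rho$. Given two disjoint essential spheres $\sigma_1, \sigma_2$ put simultaneously in minimal position with $\overline{S}_{g,1}$, the intersection loci $\sigma_i \cap \overline{S}_{g,1}$ are disjoint multicurves and multiarcs on the surface; however, $\rho(\sigma_i)$ depends on a choice of essential arc component, and a priori the selected representatives could fail to be disjoint. The heart of the argument is to show that these choices can be made coherently: an essential arc component of $\sigma_1 \cap \overline{S}_{g,1}$ may always be selected disjointly from an essential arc component of $\sigma_2 \cap \overline{S}_{g,1}$, which I expect to reduce to an innermost-disc, outermost-arc surgery on the combined intersection pattern together with the observation that the surgery preserves essentiality. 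Combined with equivariance of both $\iota$ and $\rho$, this will complete the proof.
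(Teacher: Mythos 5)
Your embedding construction is in the same spirit as the paper's (the paper takes the disc $D(a)=a\times[0,1]$ in the handlebody $V=S_g^1\times[0,1]$ and doubles it along $\partial V$ to obtain $\sigma(a)$), although your band-neighborhood description is imprecise: a band $B_\alpha\subset\overline{S}_{g,1}$ is two-dimensional, and capping its two short sides on $c$ by sub-discs of $\Sigma_0$ still leaves a surface with boundary rather than a closed sphere. The intended object is the double of the interval-bundle disc over $\alpha$, and once phrased that way equivariance and essentiality are immediate.

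The genuine gap is in the retraction. You propose to put a single sphere $\sigma$ in minimal position with $\overline{S}_{g,1}$, look at the essential arc components of the intersection, and then ``canonically'' select one; you correctly flag that coherence of this choice across disjoint spheres is the heart of the matter, but you do not resolve it, and the innermost-disc heuristic you sketch does not by itself eliminate closed components of $\sigma\cap\overline{S}_{g,1}$ or single out a well-defined arc. The paper sidesteps all of this by a different maneuver: it never works with a single sphere at all. It \emph{extends} $\sigma$ to a simple sphere system $\Sigma$, applies the tight-minimal-position machinery of Section~\ref{sec:ribb-minim-posit} (which is built for simple sphere systems and produces ribbon position, hence no closed intersection curves), and takes $a(\sigma)$ to be the part of $P(\Sigma)$ lying over $\sigma$ --- a nonempty collection of disjoint essential arcs. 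Well-definedness is then a consequence of the connectivity of $\mathcal{S}(W_n,\sigma)$ from Section~\ref{sec:spheres}: two extensions of $\sigma$ differ by elementary moves that do not affect the preimage of $\sigma$. The $1$-Lipschitz bound is free: two disjoint spheres $\sigma_1,\sigma_2$ extend to a \emph{common} simple sphere system $\Sigma$, so $a(\sigma_1)$ and $a(\sigma_2)$ both sit inside the single arc system $P(\Sigma)$ and are therefore disjoint. This ``extend, project the whole system, restrict'' idea is exactly what is missing from your argument, and without it the $1$-Lipschitz claim and even the well-definedness of $\rho$ remain unproved.
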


\medskip
The article is organized as follows. In Section~\ref{sec:prelim} we
recall the preliminaries on the topological models for $\Out(F_n)$
which we use in the sequel. In Section~\ref{sec:spheres} we prove
Theorem~\ref{aut} and Theorem~\ref{out}. 
Finally, Section~\ref{sec:bundles} contains the proof of the main
Theorem~\ref{map} and Proposition~\ref{prop:arcgraphs}.

\medskip
\textbf{Acknowledgments}: The authors would like to thank the anonymous
referee for numerous suggestions on how to improve the article.
\section{Preliminaries}
\label{sec:prelim}
In this section we set up the basic notation for the rest of the
article and recall some important facts and theorems about $\Out(F_n)$
and its topological models.

Let $F_n$ be the free group of rank $n$. By $\mathrm{Out}(F_n)$ we
denote the outer automorphism group of $F_n$. Explicitly,
$\mathrm{Out}(F_n)$ is the quotient of the automorphism group
$\mathrm{Aut}(F_n)$ by the subgroup of conjugations.

A \emph{free splitting} of $F_n$ consists of two
subgroups $G,H<F_n$ such that $F_n = G*H$.
By this we mean the following: the inclusions of $G$ and $H$ into
$F_n$ induce a natural homomorphism $G*H\to F_n$, where $*$ denotes the
free product of groups. By stating that
$F_n=G*H$ we require that this homomorphism is an isomorphism.
It is possible also to define free splittings using
actions of $F_n$ on trees (see \cite[Section~1.4]{HM10}) but
for our purposes the former point of view is more convenient.
We say that an automorphism $f$ of $F_n$ preserves the free
splitting $F_n=G*H$, if $f$ preserves the groups $G$ and $H$ setwise.

A \emph{corank $1$ free factor} is a subgroup $G$ of $F_n$ of rank
$n-1$ such that there exists a cyclic subgroup $H$ of $F_n$ with $F_n
= G*H$. 
We say that an automorphism $f$ of $F_n$ preserves this
corank $1$ free factor, if $f$ preserves the group $G$. We
emphasize that $f$ is not required to preserve the cyclic group
$H$, and that the group $H$ is not uniquely determined by $G$.

An element $\varphi\in\mathrm{Out}(F_n)$ is said to preserve the
conjugacy class of the free splitting $G*H$ (or corank $1$ free factor
$G$), if there is a representative $f$ of $\varphi$ which
preserves the free splitting $G*H$ (or the corank $1$ free factor
$G$).

\medskip
We will prove our results on the geometry of $\Out(F_n)$ using the topology of the
connected sum $W_n$ of $n$ copies of $S^2\times S^1$ (where $S^k$ denotes
the $k$--sphere). Alternatively, $W_n$ can be obtained by doubling a
handlebody $U_n$ of genus $n$ along its boundary. 
Since $\pi_1(W_n) = F_n$, there
is a natural homomorphism from the group $\mathrm{Diff}^+(W_n)$ of
orientation preserving diffeomorphisms of $W_n$ to $\mathrm{Out}(F_n)$. This
homomorphism factors through the mapping class group
$\Map(W_n)=\mathrm{Diff}^+(W_n)/\mathrm{Diff}_0(W_n)$ of $W_n$, where
$\mathrm{Diff}_0(W_n)$ is the connected component of the identity in
$\mathrm{Diff}^+(W_n)$. In fact, 
Laudenbach \cite[{Th\'{e}or\`{e}me 4.3, Remarque 1)}]{L74} showed that
the following stronger statement is true.
\begin{theorem}\label{thm:laudenbach-unpointed}
  There is a short exact sequence
  $$1\to K\to
  \mathrm{Diffeo}^+(W_n)/\mathrm{Diffeo}_0(W_n)\to\mathrm{Out}(F_n)\to
  1$$
  where $K$ is a finite group, and the map
  $\mathrm{Diffeo}^+(W_n)/\mathrm{Diffeo}_0(W_n)\to\mathrm{Out}(F_n)$ is
  induced by the action on the fundamental group.
\end{theorem}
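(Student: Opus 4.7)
The plan is to prove the theorem in two stages, corresponding to the two nontrivial claims it makes: that the natural action map from $\pi_0(\mathrm{Diff}^+(W_n))$ to $\mathrm{Out}(F_n)$ is surjective, and that its kernel $K$ is finite. Throughout, I would exploit the topological flexibility of $W_n$: it can be built from a $3$--ball by gluing $n$ pairs of boundary $2$--disks, and dual to each such pair sits an essential sphere, giving a standard simple sphere system $\Sigma = \{S_1,\ldots,S_n\}$ that decomposes $W_n$ into a single ball.

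For surjectivity I would exhibit diffeomorphisms realizing a generating set of $\mathrm{Out}(F_n)$. With respect to the basis of $F_n=\pi_1(W_n)$ dual to the spheres of $\Sigma$, Nielsen's classical generating set consists of permutations of basis elements, inversions of a single basis element, and transvections $x_i\mapsto x_ix_j$. Permutations are realized by diffeomorphisms permuting sphere pairs of $\Sigma$; inversions by rotations swapping the two sides of a single $S_i$; transvections by \emph{slide} diffeomorphisms that push one of the boundary disks associated to $S_i$ once around a loop dual to $S_j$. Checking that each move induces the claimed automorphism of $\pi_1$ is a direct computation with a basepoint in the complementary ball.

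For the kernel, let $f \in \mathrm{Diff}^+(W_n)$ act trivially on $\pi_1$. The strategy is to isotope $f$ until it is supported in a tubular neighborhood of $\Sigma$, and to analyze what remains. First one shows, using surgery on intersection circles, that $f$ can be isotoped to preserve $\Sigma$ setwise. Because $f$ acts trivially on $\pi_1$, it cannot permute the $S_i$ or swap their sides, so after further isotopy $f$ restricts to the identity on $\Sigma$ and on the complementary ball; for this last step one invokes Cerf's theorem $\pi_0(\mathrm{Diff}(D^3,\partial D^3)) = 1$ and Smale's theorem $\mathrm{Diff}^+(S^2) \simeq SO(3)$. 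What remains is a diffeomorphism supported in disjoint collars of the $S_i$, parameterized by a loop in $SO(3)$ per sphere; since $\pi_1(SO(3)) = \mathbb{Z}/2$, each sphere contributes at most a $\mathbb{Z}/2$ ``sphere twist,'' so $K$ injects into $(\mathbb{Z}/2)^n$ and is finite.

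The main obstacle will be the first step of the kernel analysis, namely showing that any $\pi_1$--trivial diffeomorphism may be isotoped to preserve the standard sphere system $\Sigma$. This requires a careful study of the intersection pattern of $f(\Sigma)$ with $\Sigma$, using innermost-disk surgery in a $3$--manifold with nontrivial $\pi_2$, together with the fact that $\pi_2(W_n)$, as a $\mathbb{Z}[F_n]$--module, has a distinguished generating set given by the components of $\Sigma$ which $f$ must respect up to the $F_n$--action and reflections. This is essentially the hard half of Laudenbach's paper, and it is here that the connected-sum structure of $W_n$ and the Kneser--Milnor prime decomposition for orientable $3$--manifolds play the central role.
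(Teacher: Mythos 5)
The paper does not prove this theorem: it is quoted verbatim from Laudenbach's monograph \cite{L74} (Th\'eor\`eme~4.3 and Remarque~1), with a reference, so there is no internal argument to compare yours against. Your sketch is nevertheless a faithful high-level outline of how Laudenbach's proof actually goes: surjectivity by realizing a Nielsen generating set of $\mathrm{Out}(F_n)$ through permutations of sphere pairs, side-swaps, and slide diffeomorphisms; and finiteness of the kernel by reducing a $\pi_1$-trivial diffeomorphism to one supported in collar neighborhoods of the spheres, then invoking Cerf's theorem on $\mathrm{Diff}(D^3,\partial D^3)$ and Smale's theorem $\mathrm{Diff}^+(S^2)\simeq SO(3)$ to see that each sphere contributes at most a $\ZZ/2$ twist, giving $K\hookrightarrow(\ZZ/2)^n$. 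You also correctly identify the genuine difficulty, namely isotoping a $\pi_1$-trivial diffeomorphism to preserve a fixed simple sphere system; this rests on Laudenbach's homotopy-implies-isotopy theorem for embedded $2$-spheres, which is indeed where the Kneser--Milnor machinery enters.

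One small slip in the setup: gluing $n$ pairs of boundary $2$-disks of a $3$-ball produces a genus-$n$ handlebody, which is irreducible and is not $W_n$. One obtains $W_n$ either by doubling that handlebody along its boundary (as the paper notes in Section~\ref{sec:prelim}) or by gluing the $2n$ boundary $2$-spheres of a punctured $S^3$ in pairs; the spheres dual to the gluings then give the simple sphere system $\Sigma$ you use. This does not affect the rest of the outline, since everything downstream only uses the existence of a simple sphere system whose complement is a single ball.
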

By \cite[{Th\'{e}or\`{e}me 4.3, part 2)}]{L74}, we can replace
diffeomorphisms by homeomorphisms in 
the definition of the mapping class group of $W_n$. 

An embedded $2$-sphere in $W_n$ is called
\emph{essential}, if it defines a nontrivial element in
$\pi_2(W_n)$. Equivalently, an embedded $2$-sphere is essential if it
does not bound a ball in $W_n$. Throughout
the article we assume that $2$-spheres are smoothly embedded and
essential, unless explicitly stated otherwise.
 
A \emph{sphere system} is a set
$\{\sigma_1,\ldots,\sigma_m\}$ of essential spheres in $W_n$ no two of
which are homotopic. A sphere system is called \emph{simple} if its
complementary components in $W_n$ are simply connected.

\medskip
There are several ways of organizing spheres and sphere systems into
graphs. The \emph{sphere graph} $\mathcal{SG}(W_n)$ is the graph whose
vertex set is the set of isotopy classes of essential spheres in
$W_n$. Two distinct vertices are joined by an edge if the
corresponding isotopy classes of spheres have representatives which
are disjoint. This graph is naturally isomorphic to the free splitting
graph of the free group $F_n$ (compare \cite{AS09}).

\medskip
The \emph{simple sphere system graph} $\mathcal{S}(W_n)$ is the graph whose
vertex set is the set of homotopy classes of simple sphere
systems. Two distinct such vertices are joined by an edge of length $1$ if the 
corresponding sphere systems are disjoint up to homotopy
(compare \cite{Ha95} and \cite{HV96}). We will usually not distinguish
between the homotopy class of a sphere system and the vertex in
$\mathcal{S}(W_n)$ it defines.
We note that if $\Sigma \subset \Sigma'$ are two sphere systems which
are contained in each other, then the corresponding vertices in
$\mathcal{S}(W_n)$ are joined by an edge (one can homotope $\Sigma$
slightly off itself to make it disjoint). In fact, there is a coarse
converse:
\begin{lemma}\label{lem:inclusion-exclusion}
  There is a number $K>0$ with the following property. If
  $\Sigma,\Sigma'$ are disjoint sphere systems, then $\Sigma'$ can be
  obtained from $\Sigma$ by at most $K$ moves, each of which removes a
  sphere or adds a disjoint one. 
\end{lemma}
\begin{proof}
If $\Sigma$ and $\Sigma'$ are disjoint, then they are contained in a
common maximal sphere system. Every maximal sphere system in $W_n$ has
exactly $3n-3$ spheres (compare \cite{Ha95}), and thus $K=3n-3$
satisfies the requirement in the lemma.
\end{proof}

As a consequence of Lemma~\ref{lem:inclusion-exclusion}, the graph in
which edges correspond to  
inclusions (see e.g. \cite{HiHo12}) is quasi-isometric to
$\mathcal{S}(W_n)$, and therefore the 
arguments of this article would also apply to that graph. 

The surgery procedure described in Section~3 of \cite{HV96} shows that
$\mathcal{S}(W_n)$ is connected. Furthermore,
the mapping class group of $W_n$ acts on $\mathcal{S}(W_n)$ properly 
discontinuously and cocompactly (see e.g. the proof of Corollary~4.4
of \cite{HV96} for details on this). 
The finite subgroup $K$ of ${\rm Map}(W_n)$ occurring in the statement of 
Theorem~\ref{thm:laudenbach-unpointed}  acts trivially on isotopy
classes of spheres and hence ${\rm Out}(F_n)$ acts on
$\mathcal{S}(W_n)$ simplicially as well. 
The \v{S}varc-Milnor lemma then applies and yields the following
\begin{lemma}
  The sphere system graph $\mathcal{S}(W_n)$ is quasi-isometric to
  ${\rm Out}(F_n)$. 
\end{lemma}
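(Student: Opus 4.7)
The plan is a direct application of the Svarc--Milnor lemma, which states that if a group $\Gamma$ acts properly discontinuously and cocompactly by isometries on a proper geodesic metric space $X$, then $\Gamma$ is finitely generated and any orbit map $\Gamma \to X$ is a quasi-isometry. So the task reduces to verifying the four hypotheses for the action of $\mathrm{Out}(F_n)$ on $\mathcal{S}(W_n)$.

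First I would equip $\mathcal{S}(W_n)$ with the simplicial metric in which every edge has length $1$; since we already noted that $\mathcal{S}(W_n)$ is connected, this makes it a geodesic metric space, and $\mathrm{Out}(F_n)$ clearly acts on it by isometries because its action is simplicial. Next I would check that $\mathcal{S}(W_n)$ is proper, which for a simplicial graph amounts to local finiteness. This follows because if $\sigma$ is a simple sphere system then any vertex $\tau$ adjacent to $\sigma$ is a simple sphere system realized disjointly from $\sigma$; each component sphere of $\tau$ then lies in one of the finitely many ball components of $W_n \setminus \sigma$, and up to homotopy there are only finitely many essential spheres in a ball with a prescribed (finite) collection of boundary spheres removed, since such a sphere is determined by the partition of the boundary spheres it induces.

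For cocompactness and proper discontinuity of the $\mathrm{Out}(F_n)$--action, I would use Theorem~\ref{thm:laudenbach-unpointed} together with the stated fact that $\mathrm{Map}(W_n)$ acts on $\mathcal{S}(W_n)$ properly discontinuously and cocompactly. Cocompactness passes immediately to the quotient $\mathrm{Out}(F_n) = \mathrm{Map}(W_n)/K$, since a fundamental domain for $\mathrm{Map}(W_n)$ is also a fundamental domain for the $\mathrm{Out}(F_n)$--action. For proper discontinuity, observe that for any compact set $C \subset \mathcal{S}(W_n)$ the set $\{\varphi \in \mathrm{Map}(W_n) : \varphi C \cap C \neq \emptyset\}$ is finite; since $K$ is finite, its image in $\mathrm{Out}(F_n)$ is also finite, and this is exactly the corresponding set for the $\mathrm{Out}(F_n)$--action.

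The main subtlety, and in some sense the only non-formal step, is local finiteness of $\mathcal{S}(W_n)$; once that is in place, everything else is a mechanical unpacking of the Svarc--Milnor lemma combined with the fact that the kernel of $\mathrm{Map}(W_n) \to \mathrm{Out}(F_n)$ is finite. The conclusion is then that an orbit map $\mathrm{Out}(F_n) \to \mathcal{S}(W_n)$ is a quasi-isometry, which is the statement of the lemma.
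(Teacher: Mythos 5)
Your proposal is correct and takes essentially the same route as the paper: the paper likewise invokes the Svarc--Milnor lemma, relying on the facts (from Hatcher--Vogtmann) that $\mathrm{Map}(W_n)$ acts properly discontinuously and cocompactly on the connected graph $\mathcal{S}(W_n)$ and (from Laudenbach's theorem) that the finite kernel $K$ acts trivially on isotopy classes of spheres, so the action descends to $\mathrm{Out}(F_n)$. The paper leaves the verification of Svarc--Milnor's hypotheses implicit, whereas you spell them out, including the local-finiteness check; that is a reasonable elaboration rather than a different argument.
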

In particular, we warn the reader that $\mathcal{S}(W_n)$ is not
quasi-isometric to the sphere graph $\mathcal{SG}(W_n)$. 

We also need a variant $W_{n,1}$ of the manifold $W_n$ with a fixed basepoint.  
All isotopies and homeomorphisms of $W_{n,1}$ are required to fix the
basepoint.  In particular, isotopies of spheres in $W_{n,1}$ are not allowed to
move the sphere across the basepoint.

Most of the discussion above is equally valid for $W_{n,1}$.
Since $W_{n,1}$ has a designated basepoint, every homeomorphism of $W_{n,1}$
induces an actual automorphism of $\pi_1(W_{n,1})$. In fact, we have
\begin{theorem}\label{thm:laudenbach-pointed}
  There is a short exact sequence
  $$1\to K\to
  \mathrm{Diffeo}^+(W_{n,1})/\mathrm{Diffeo}_0(W_{n,1})\to\mathrm{Aut}(F_n)\to
  1$$
  where $K$ is a finite group, and the map
  $\mathrm{Diffeo}^+(W_{n,1})/\mathrm{Diffeo}_0(W_{n,1})\to\mathrm{Aut}(F_n)$ is
  induced by the action on the fundamental group.
\end{theorem}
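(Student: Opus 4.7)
The plan is to deduce the pointed version from Theorem~\ref{thm:laudenbach-unpointed} by analysing the comparison map $\Map(W_{n,1})\to\Map(W_n)$ via point-pushing. Evaluation at the basepoint $p$ yields a Serre fibration
$$\mathrm{Diffeo}^+(W_{n,1})\hookrightarrow\mathrm{Diffeo}^+(W_n)\xrightarrow{\mathrm{ev}_p} W_n,$$
whose tail of the long exact sequence in homotopy groups gives the point-pushing sequence
$$\pi_1(W_n,p)\xrightarrow{\mathrm{push}}\Map(W_{n,1})\xrightarrow{\pi}\Map(W_n)\to 1,$$
also exact at the right by ambient isotopy extension. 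Because the basepoint is preserved, there is a natural homomorphism $\Phi\colon\Map(W_{n,1})\to\Aut(F_n)$ induced by the action on $\pi_1(W_n,p)=F_n$. The composition $\Phi\circ\mathrm{push}$ is the inclusion $F_n\hookrightarrow\Aut(F_n)$ as inner automorphisms, which is injective since $F_n$ has trivial centre. These maps fit into a commutative diagram whose right-hand column is the exact sequence of Theorem~\ref{thm:laudenbach-unpointed} and whose bottom row is the standard sequence $1\to F_n\to\Aut(F_n)\to\Out(F_n)\to 1$.

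Surjectivity of $\Phi$ then follows from a short diagram chase. Given $\phi\in\Aut(F_n)$, first realize its image in $\Out(F_n)$ by some $[f]\in\Map(W_n)$ via Theorem~\ref{thm:laudenbach-unpointed}, and then lift $[f]$ to $[\tilde f]\in\Map(W_{n,1})$ using the point-pushing sequence. By construction $\Phi([\tilde f])$ and $\phi$ have the same image in $\Out(F_n)$, so they differ by an inner automorphism $c_g$. Composing $[\tilde f]$ with $\mathrm{push}(g)$ (with the appropriate sign) corrects the discrepancy and produces a preimage of $\phi$.

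For finiteness of the kernel, set $K=\ker\Phi$ and consider $\pi|_K\colon K\to\Map(W_n)$. Its image lies in the kernel $K_0$ of $\Map(W_n)\to\Out(F_n)$, which is finite by the unpointed theorem. If $[\tilde f]\in K$ satisfies $\pi([\tilde f])=1$, exactness of the point-pushing sequence yields $[\tilde f]=\mathrm{push}(g)$ for some $g\in F_n$, and then $\id=\Phi([\tilde f])=c_g$ forces $g=1$ since $F_n$ is centreless, so $[\tilde f]$ is trivial. Hence $\pi|_K$ is injective, $K$ embeds in $K_0$, and the desired short exact sequence follows.

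The main point requiring care is the construction of $\mathrm{push}$ together with the identification $\Phi\circ\mathrm{push}=(g\mapsto c_g)$. Concretely, given a loop $\gamma$ at $p$, ambient isotopy extension produces an isotopy of $W_n$ from the identity to a diffeomorphism whose trace at $p$ is $\gamma$; its time-one map fixes $p$ and represents $\mathrm{push}([\gamma])$, well-defined in $\Map(W_{n,1})$ because two such isotopies differ by a loop in $\mathrm{Diffeo}^+(W_n)$ that can be deformed rel endpoints to fix $p$. That this element acts on $\pi_1(W_n,p)$ as conjugation by $[\gamma]$ is verified by transporting an arbitrary based loop along the isotopy. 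Apart from this essentially standard verification, the proof is a formal diagram chase.
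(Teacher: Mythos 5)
The paper states Theorem~\ref{thm:laudenbach-pointed} without giving a proof; it is introduced with ``In fact, we have'' immediately after the Laudenbach-attributed Theorem~\ref{thm:laudenbach-unpointed}, evidently treated as a standard companion statement. Your proof fills this gap, and it does so correctly via the standard route: the evaluation fibration and the resulting Birman (point-pushing) exact sequence $\pi_1(W_n,p)\to\Map(W_{n,1})\to\Map(W_n)\to1$, combined with a diagram chase against $1\to F_n\to\Aut(F_n)\to\Out(F_n)\to1$. The two places requiring care -- that the point-push factors through conjugation under $\Phi$, and that centrelessness of $F_n$ gives injectivity of $\pi|_K$ -- are both handled. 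This is exactly the argument one would expect to sit behind the paper's unproved assertion, so there is no genuine divergence in approach; you have simply supplied the argument the paper omits. One small stylistic remark: you do not strictly need $\mathrm{push}$ to be a group homomorphism -- exactness of the sequence plus the identity $\Phi\circ\mathrm{push}(g)=c_g$ on elements is all that is actually used in the surjectivity and kernel arguments -- so you could streamline the final paragraph by dropping the Whitney-extension discussion of well-definedness and just invoking the connecting map of the fibration.
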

We call a sphere in $W_{n,1}$ essential, if it does not bound a ball
(which may contain the basepoint). In this way, every essential 
sphere in $W_{n,1}$ defines an essential sphere in $W_n$ by 
``forgetting the basepoint''. 
We define $\mathcal{S}(W_{n,1})$ to be the graph of
simple sphere systems in $W_{n,1}$. Edges in $\mathcal{S}(W_{n,1})$
again correspond to disjointness up to homotopy. In this setting, the
\v{S}varc-Milnor lemma implies 
\begin{lemma}
  The sphere system graph $\mathcal{S}(W_{n,1})$ is quasi-isometric to
  ${\rm Aut}(F_n)$.   
\end{lemma}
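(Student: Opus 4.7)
The plan is to invoke the \v{S}varc--Milnor lemma for the action of the pointed mapping class group $\mathrm{Diffeo}^+(W_{n,1})/\mathrm{Diffeo}_0(W_{n,1})$ on the graph $\mathcal{S}(W_{n,1})$, in exact analogy with the argument used for the preceding unpointed lemma. By Theorem~\ref{thm:laudenbach-pointed} this mapping class group is an extension of $\mathrm{Aut}(F_n)$ by a finite kernel $K$, so it suffices to exhibit a properly discontinuous, cocompact simplicial action on a connected, locally finite graph, and to verify that $K$ acts trivially on pointed isotopy classes of spheres so that the action descends to $\mathrm{Aut}(F_n)$.

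Three of the four hypotheses are essentially immediate. For \emph{local finiteness}: a simple sphere system cuts $W_{n,1}$ into finitely many simply connected pieces with boundary spheres, and only finitely many pointed isotopy classes of essential spheres are disjoint from these pieces. For \emph{cocompactness}: the combinatorial type of the decomposition of $W_{n,1}$ cut along a simple sphere system, enriched with the datum of which complementary piece contains the basepoint, takes only finitely many values, and two simple sphere systems with the same enriched type are related by a basepoint-preserving homeomorphism. For \emph{proper discontinuity}: an element of the stabilizer of a simple sphere system $\Sigma$ is determined up to finite ambiguity by its combinatorial action on the complementary balls and their boundary spheres, so the stabilizer is finite.

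The substantive step, and the main obstacle, is \emph{connectedness} of $\mathcal{S}(W_{n,1})$. Here I would adapt the Hatcher--Vogtmann surgery procedure from \cite{HV96} to the pointed setting. Given two simple sphere systems $\Sigma_1,\Sigma_2$ put in minimal position, an innermost disk $D$ on a sphere $S\in\Sigma_2$ bounded by a curve of $\Sigma_1\cap\Sigma_2$ produces two surgered spheres; replacing $S$ with one of them yields a simple sphere system $\Sigma_2'$ disjoint from $\Sigma_2$ and with strictly fewer intersections with $\Sigma_1$. The key pointed modification is that $D$ lies in a single complementary component of $\Sigma_1$, and an isotopy supported in that component allows us to assume $D$ avoids the basepoint, so that the entire surgery takes place away from the basepoint and $\Sigma_2'$ is a legitimate vertex of $\mathcal{S}(W_{n,1})$ adjacent to $\Sigma_2$. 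Iterating produces a path from $\Sigma_2$ to a sphere system disjoint from $\Sigma_1$, hence a path in $\mathcal{S}(W_{n,1})$ from $\Sigma_2$ to $\Sigma_1$. Finally, the triviality of the $K$-action follows from Laudenbach's description of $K$ as generated by sphere twists, each supported in a small regular neighborhood of an embedded sphere that can be chosen to miss the basepoint and hence is isotopic through pointed diffeomorphisms to the identity on the complement of its support.
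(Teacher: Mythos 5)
Your proposal is correct and follows the same route as the paper. The paper disposes of this lemma with a single sentence (``Again, the Svarc--Milnor lemma implies''), implicitly asserting that the verification given for the unpointed case --- proper discontinuity and cocompactness of the mapping class group action, connectedness via the Hatcher--Vogtmann surgery procedure, and triviality of the finite kernel $K$ on isotopy classes of spheres --- carries over to $W_{n,1}$ verbatim. You make explicit exactly what needs re-checking in the presence of a basepoint: you enrich the cocompactness count with the datum of which complementary piece contains $p$, you observe that innermost-disk surgery can be performed away from the basepoint so that the Hatcher--Vogtmann path stays inside $\mathcal{S}(W_{n,1})$, and you note that Laudenbach's sphere twists generating $K$ can be supported off the basepoint. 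This is the argument the paper is alluding to but does not write out, so your proof is a legitimate (and more careful) rendering of it rather than a genuinely different approach.
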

There is a natural forgetful map $\mathcal{S}(W_{n,1}) \to
\mathcal{S}(W_n)$ which forgets the basepoint and identifies parallel
spheres. This map is equivariant with respect to the actions of
$\Aut(F_n)$ and $\Out(F_n)$ on spheres.
\section{Stabilizers of spheres}
\label{sec:spheres}

The purpose of this section is to give a topological proof of
the following theorem of Handel and Mosher \cite{HM10}.
\begin{theorem}\label{thm:undistorted-handle-mosher}
  \begin{enumerate}[i)]
  \item The stabilizer of the conjugacy class of a free splitting $F_n
    = G * H$ is a Lipschitz retract of $\mathrm{Out}(F_n)$.
  \item Let $G < F_n$ be a free factor of corank $1$. Then the
    stabilizer of the conjugacy class of $G$ is a Lipschitz retract of
    $\mathrm{Out}(F_n)$.
  \end{enumerate}
\end{theorem}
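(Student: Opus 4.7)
The proof uses the topological models for $\Out(F_n)$ and $\Aut(F_n)$: the sphere graphs $\mathcal{S}(W_n)$ and $\mathcal{S}(W_{n,1})$, related by the natural forget-basepoint map $q:\mathcal{S}(W_{n,1}) \to \mathcal{S}(W_n)$. A free splitting $F_n = G*H$ is represented by an essential sphere $\sigma \subset W_n$ --- separating if both factors are nontrivial, non-separating if one is cyclic --- while a corank $1$ free factor $G$ is represented by a non-separating sphere $\sigma$ with $\pi_1(W_n \setminus \sigma) \cong G$ (and different spheres may give conjugate factors). In the pointed setting, the $\Aut(F_n)$-stabilizer of a pair $(\sigma, p)$ with $p \notin \sigma$ is exactly the pointwise stabilizer of the splitting (separating case) or of the subgroup $G$ (non-separating case).

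The methods of Theorem~\ref{aut} yield, more generally, a coarsely equivariant Lipschitz retraction $\rho:\mathcal{S}(W_{n,1}) \to \mathcal{S}(W_{n,1};\sigma,p)$ onto the subgraph of pointed simple sphere systems containing $\sigma$. To descend to $\Out(F_n)$, define a section $s:\mathcal{S}(W_n) \to \mathcal{S}(W_{n,1})$ by choosing for each $\Sigma \in \mathcal{S}(W_n)$ a basepoint in a distinguished complementary ball, in a way that is coarsely Lipschitz in $\Sigma$ (for example, by transporting the basepoint along a canonical path in $W_n$ determined by a shortest edge-path from a fixed base vertex). The composition
\[
\mathcal{S}(W_n) \xrightarrow{\ s\ } \mathcal{S}(W_{n,1}) \xrightarrow{\ \rho\ } \mathcal{S}(W_{n,1};\sigma,p) \xrightarrow{\ q\ } \mathcal{S}(W_n;\sigma)
\]
is then a coarse Lipschitz retraction. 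For part (i) its image coincides coarsely with the stabilizer of the free splitting conjugacy class. For part (ii) the $\Out(F_n)$-stabilizer of the conjugacy class of $G$ is a finite-index overgroup of the stabilizer of $\sigma$, reflecting the ambiguity in choosing $\sigma$ from $G$; this finite ambiguity is absorbed into the Lipschitz constant.

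The principal obstacle is that the section $s$ is intrinsically non-equivariant, so one must verify that its composition with the equivariant $\rho$ and the forgetful $q$ still yields a coarsely well-defined retraction on $\mathcal{S}(W_n)$. If two sphere systems $\Sigma, \Sigma'$ are adjacent in $\mathcal{S}(W_n)$, the corresponding lifts $s(\Sigma), s(\Sigma')$ may differ by a bounded basepoint motion; one then needs that such a motion costs bounded distance in $\mathcal{S}(W_{n,1})$, which amounts to showing that the fibers of $q$ have locally bounded geometry relative to neighboring fibers. A secondary technical point is controlling how the coarse (rather than strict) equivariance of $\rho$ propagates bounded distortion from $s$: each step adds only an additive constant, and one must keep careful track of these constants to conclude that the full composition is Lipschitz.
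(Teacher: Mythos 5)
Your overall architecture matches the paper's: pass to the pointed model $\mathcal{S}(W_{n,1})$, project there, then forget the basepoint. However, there are two genuine gaps and one conceptual slip. First, the section $s:\mathcal{S}(W_n)\to\mathcal{S}(W_{n,1})$ cannot be conjured by ``transporting the basepoint along a shortest edge-path from a fixed base vertex'' --- shortest paths are neither unique nor stable under small perturbations, and you correctly identify this as your principal obstacle without resolving it. The paper instead invokes Mosher's theorem \cite{Mo96}: since $F_n$ is a nonelementary word-hyperbolic group, the non-split extension $1\to F_n\to\Aut(F_n)\to\Out(F_n)\to 1$ admits a quasi-isometric section, and this transfers to the sphere-graph models via the orbit maps. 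This is a substantive external input, not a bookkeeping step.

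Second, and more seriously, you omit the cleanup map $\mathcal{R}$ entirely, and without it the composition $q\circ\rho\circ s$ is not a retraction, even coarsely. If $\Sigma'$ is already disjoint from $\hat\sigma$ in $W_n$, the lift $s(\Sigma')$ may still intersect the basepointed sphere $\sigma$ arbitrarily much in $W_{n,1}$ --- these intersections cannot be removed by a basepoint-preserving isotopy, yet each such ``superfluous'' circle bounds a disk containing the basepoint (the paper's Lemma~\ref{lem:superfluous-contains-p}). The paper's $\mathcal{R}$ surgers away precisely these artifacts before applying $\pi_\sigma$; Lemma~\ref{lem:disjoint-gets-disjoint} then gives $\mathcal{R}(s(\Sigma'))$ disjoint from $\sigma$, forcing $\pi_\sigma$ to act as the identity and yielding $r(\Sigma')=\Sigma'$. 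Without $\mathcal{R}$, the surgeries performed by $\rho=\pi_\sigma$ on the superfluous intersections can carry $s(\Sigma')$ unboundedly far from anything projecting back to $\Sigma'$. Finally, your dictionary between splittings and spheres is off: in the paper's conventions a free splitting $F_n=G*H$ (with both $G,H$ recorded) always corresponds to a \emph{separating} sphere, even when $H$ is cyclic; the \emph{non-separating} case corresponds exactly to the stabilizer of the conjugacy class of a corank $1$ free factor $G$ with no choice of complementary $\ZZ$ remembered, and by Lemma~\ref{lem:factor-stabilizers-are-sphere-stabilizers} the sphere stabilizer projects \emph{onto} this conjugacy-class stabilizer rather than being a finite-index subgroup of it.
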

The following lemma describes the stabilizers occurring in
Theorem~\ref{thm:undistorted-handle-mosher} in the topological terms
discussed in Section~\ref{sec:prelim}.   
The statement is an immediate consequence
of Corollary~21 of \cite{HM10} and a standard topological
argument which is for example presented in \cite{AS09}. 
\begin{lemma}\label{lem:factor-stabilizers-are-sphere-stabilizers}
  \begin{enumerate}[i)]
  \item Let $\sigma$ be an essential separating sphere in $W_n$. Then the
    stabilizer of $\sigma$ in $\Map(W_n)$ projects onto the stabilizer of
    the conjugacy class of a free splitting in
    $\mathrm{Out}(F_n)$. Furthermore, every stabilizer of a conjugacy
    class of a free splitting arises in this way.
  \item Let $\sigma$ be a nonseparating sphere in $W_n$. Then the
    stabilizer of $\sigma$ in $\Map(W_n)$ projects onto the stabilizer of
    the conjugacy class of a corank $1$ free factor in
    $\mathrm{Out}(F_n)$. Furthermore, every stabilizer of a conjugacy
    class of a corank $1$ free factor arises in this way. 
  \end{enumerate}
\end{lemma}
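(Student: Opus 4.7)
The plan is to verify each part of the lemma in two directions: first, that the stabilizer in $\Map(W_n)$ of a sphere $\sigma$ maps into the corresponding stabilizer in $\Out(F_n)$; and second, that every such stabilizer in $\Out(F_n)$ arises from some sphere in this way. I treat case (i); case (ii) is parallel.

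For the ``into'' direction, let $\sigma \subset W_n$ be an essential separating sphere, and let $V_1, V_2$ be the closures of the two components of $W_n \setminus \sigma$. Each $V_i$ is a punctured connected sum of copies of $S^1 \times S^2$, hence has free fundamental group. Choosing a basepoint on $\sigma$ and applying van Kampen (using $\pi_1(\sigma) = 0$), I would identify $F_n = G * H$ with $G = \pi_1(V_1)$ and $H = \pi_1(V_2)$. Any $\phi \in \Map(W_n)$ fixing $\sigma$ setwise preserves the unordered pair $\{V_1, V_2\}$ and hence the unordered pair of conjugacy classes $\{[G], [H]\}$, so its image in $\Out(F_n)$ stabilizes the conjugacy class of the splitting. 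For (ii), $W_n \setminus \sigma$ is connected with fundamental group $F_{n-1}$ included as a corank $1$ free factor, and any $\phi$ preserving $\sigma$ preserves this factor up to conjugation.

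For the converse direction, I would invoke Corollary~21 of \cite{HM10} to realize any given conjugacy class of free splittings (respectively corank $1$ free factors) by an essential separating (respectively non-separating) sphere $\sigma$. Given $\varphi \in \Out(F_n)$ preserving the conjugacy class of the splitting, Theorem~\ref{thm:laudenbach-unpointed} produces some $\phi \in \Map(W_n)$ inducing $\varphi$. Then $\phi(\sigma)$ is an essential separating sphere inducing the same conjugacy class of splittings as $\sigma$, and the standard uniqueness result recalled in \cite{AS09} shows that $\phi(\sigma)$ is isotopic to $\sigma$. Composing $\phi$ with an ambient isotopy carrying $\phi(\sigma)$ back to $\sigma$ yields an element of the $\sigma$-stabilizer in $\Map(W_n)$ whose image in $\Out(F_n)$ is $\varphi$, establishing surjectivity.

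The main obstacle is precisely this uniqueness statement: two essential spheres in $W_n$ inducing the same conjugacy class of free splittings (or corank $1$ factors) must be isotopic. This is the content of the ``standard topological argument'' alluded to in the statement and rests on Laudenbach's normal-form theorem for sphere systems: two spheres can be placed in a position of minimal intersection, and the matching of the induced $\pi_1$--decompositions forces this intersection to be empty; an identification of the complementary manifolds then shows that the two spheres cobound a product region and are therefore isotopic. Once this input from \cite{AS09} and Theorem~\ref{thm:laudenbach-unpointed} are in hand, the rest of the proof reduces to the van Kampen calculation and basepoint bookkeeping described above.
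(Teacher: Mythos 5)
Your proof is the natural one and plausibly reconstructs what the paper has in mind when it defers to Corollary~21 of \cite{HM10} and the ``standard topological argument'' in \cite{AS09}. Van Kampen gives the ``into'' direction (a separating sphere induces a free splitting, a nonseparating sphere induces a corank~$1$ free factor, and a homeomorphism preserving $\sigma$ preserves these up to conjugacy), Theorem~\ref{thm:laudenbach-unpointed} lifts an outer automorphism to a homeomorphism, and the remaining technical input --- two essential spheres inducing the same conjugacy class of splitting (respectively corank~$1$ factor) are isotopic --- yields surjectivity onto the sphere stabilizer. Your sketch of this uniqueness (Laudenbach's theorem that homotopic embedded essential spheres in $W_n$ are isotopic, combined with a minimal-position/innermost-disk argument to promote agreement of $\pi_1$-splittings to homotopy) is correct in outline and is indeed the content one would import from \cite{AS09}.

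Two remarks. First, when the two complementary components of a separating $\sigma$ have the same rank, a homeomorphism preserving $\sigma$ can swap the two sides and hence swap the conjugacy classes $[G]$ and $[H]$. Under the paper's definition --- which requires a representative automorphism to preserve $G$ and $H$ individually --- the image of the sphere stabilizer is then, strictly speaking, a degree-two overgroup of the splitting stabilizer. This is a finite-index discrepancy that is harmless for every Lipschitz-retraction statement in the paper, but it is worth recording explicitly rather than eliding it under ``unordered pair.'' Second, you invoke Corollary~21 of \cite{HM10} only to realize a given splitting or corank~$1$ factor by a sphere; that realization is elementary (build $W_n$ as the appropriate connected sum and take $\sigma$ to be the connecting sphere), so the corollary is likely doing different work in the authors' intended argument --- most plausibly supplying the group-theoretic description of the stabilizer being matched. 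This mismatch in attribution is cosmetic; the substance of your argument is sound and fills in exactly the details the paper's one-line citation asks the reader to supply.
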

To study stabilizers of essential spheres in $W_n$ we use the
following geometric model for stabilizers of spheres.

For an essential sphere $\sigma$, let $\mathcal{S}(W_n,\sigma)$ be the
complete subgraph of $\mathcal{S}(W_n)$ whose vertex set is the set of
homotopy classes of simple sphere systems which are disjoint from
$\sigma$ (but which may contain $\sigma$).
The surgery procedure described in \cite{HV96} shows that the graph
$\mathcal{S}(W_n,\sigma)$ is connected.
The mapping class group of $W_n-\sigma$ acts with finitely many orbits
on spheres in $W_n-\sigma$ (since there are only finitely many
homeomorphism types of complements). Thus, by Laudenbach's theorem,
the stabilizer of $\sigma$ in ${\rm Out}(F_n)$ acts cocompactly on
$\mathcal{S}(W_n,\sigma)$.
Thus the \v{S}varc-Milnor lemma immediately implies the following.
\begin{lemma}\label{lem:qi-model-for-out}
  The graph $\mathcal{S}(W_n,\sigma)$ is equivariantly
  quasi-isometric to the 
  stabilizer of $\sigma$ in ${\rm Out}(F_n)$.
\end{lemma}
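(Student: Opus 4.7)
The plan is to verify the hypotheses of the Svarc-Milnor lemma for the action of the stabilizer of $\sigma$ in $\mathrm{Out}(F_n)$, which I denote by $\mathrm{Stab}(\sigma)$, on the graph $\mathcal{S}(W_n,\sigma)$ equipped with its simplicial path metric, and then to conclude via the standard fact that the orbit map at any base vertex is an equivariant quasi-isometry.

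First I would record that $\mathcal{S}(W_n,\sigma)$ is a proper geodesic metric space. Geodesicity follows from connectedness, which is stated in the paragraph preceding the lemma as a consequence of the Hatcher-Vogtmann surgery procedure applied relative to $\sigma$. Local finiteness (and hence properness, since the graph is simplicial) is inherited from $\mathcal{S}(W_n)$: since $\mathrm{Out}(F_n)$ acts properly discontinuously and cocompactly on $\mathcal{S}(W_n)$, each vertex has only finitely many neighbours there, and $\mathcal{S}(W_n,\sigma)$ is a full subgraph.

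Next I would check that $\mathrm{Stab}(\sigma)$ acts on $\mathcal{S}(W_n,\sigma)$ by isometries, properly discontinuously, and cocompactly. Isometry is tautological from the simplicial structure, and cocompactness is stated just before the lemma. Properness amounts to finiteness of the vertex stabilizers: for $v \in \mathcal{S}(W_n,\sigma)$, the stabilizer of $v$ in $\mathrm{Stab}(\sigma)$ is contained in the stabilizer of $v$ in $\mathrm{Out}(F_n)$, which is finite because the ambient action on $\mathcal{S}(W_n)$ is properly discontinuous; combined with local finiteness, this yields proper discontinuity in the metric sense.

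With these four ingredients in hand, the Svarc-Milnor lemma produces the desired equivariant quasi-isometry. I do not expect any genuine obstacle here; the only point worth attention is a conceptual one, namely to distinguish the intrinsic path metric on $\mathcal{S}(W_n,\sigma)$ from the (a priori larger) metric induced by inclusion into $\mathcal{S}(W_n)$. Since the lemma uses the intrinsic metric, and since it is this metric in which cocompactness is asserted, no comparison between the two is needed for the statement at hand.
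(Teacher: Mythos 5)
Your proposal follows the paper's approach exactly: the paper notes connectedness (via the Hatcher--Vogtmann surgery procedure) and cocompactness of the stabilizer action, then invokes the Svarc--Milnor lemma. Your version is somewhat more careful in explicitly spelling out local finiteness, proper discontinuity, and the choice of intrinsic path metric, all of which are left implicit in the paper.
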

Combining Lemma~\ref{lem:factor-stabilizers-are-sphere-stabilizers}
and Lemma~\ref{lem:qi-model-for-out}, 
Theorem~\ref{thm:undistorted-handle-mosher} thus reduces to the
following.
\begin{theorem}\label{thm:sphere-retract}
  The subgraph $\mathcal{S}(W_n,\sigma)$ is a Lipschitz retract of 
  $\mathcal{S}(W_n)$.
\end{theorem}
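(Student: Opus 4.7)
The plan is to construct an explicit projection $\pi\colon\mathcal{S}(W_n)\to\mathcal{S}(W_n,\sigma)$ via the innermost disc surgery procedure of Hatcher \cite{Ha95}, and then to verify the retract and Lipschitz properties directly from the construction. The construction is modeled on the subsurface projection of markings on a surface, but simplified by the fact that surgery in this setting produces sphere systems that can be taken to contain $\sigma$ directly, obviating any separate extension step.

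Given a simple sphere system $S$, the first step is to isotope $S$ into minimal position with $\sigma$, so that $S\cap\sigma$ is a collection of circles each essential on both surfaces. If $S\cap\sigma$ is empty, set $\pi(S)=S$ in case $\sigma$ is isotopic to a sphere of $S$, and $\pi(S)=S\cup\{\sigma\}$ otherwise. If $S\cap\sigma\neq\emptyset$, pick a circle $c\subset S\cap\sigma$ that is innermost on $\sigma$, so that $c$ bounds a disc $D\subset\sigma$ disjoint from $S$; the circle $c$ cuts the containing sphere $s\in S$ into two discs $D_1,D_2$, and I replace $s$ by the (essential components of the) two spheres $D\cup D_1$ and $D\cup D_2$, using small parallel push-offs of $D$ to keep everything disjoint. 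Since $s$ is essential, a standard argument comparing ball fillings shows that at least one of the two new spheres remains essential. Iterating finitely many times yields a sphere system $S^*$ disjoint from $\sigma$, and I define $\pi(S)=S^*\cup\{\sigma\}$. A van Kampen argument shows that $\pi(S)$ is simple: every complementary component of $\pi(S)$ is obtained from a component of $W_n\setminus S$ by further cutting along subspheres of $\sigma$, so simple connectivity is preserved. The standard analysis in \cite{Ha95} shows that the isotopy class of $\pi(S)$ is independent of all choices made.

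The retract property on $\mathcal{S}(W_n,\sigma)$ is now immediate, since whenever $S$ is already disjoint from $\sigma$ no surgery is required, and $\pi(S)$ differs from $S$ only by possibly adjoining $\sigma$, so it lies at distance at most $1$ from $S$. For the Lipschitz property, suppose $S$ and $S'$ are adjacent vertices of $\mathcal{S}(W_n)$, realized as a disjoint sphere system $S\cup S'$. I would isotope $S\cup S'$ simultaneously into minimal position with $\sigma$ and then perform surgery on $S$ and on $S'$ in parallel, using innermost discs of $\sigma$ which by construction are disjoint from both systems in their interiors. Because the surgeries are local near $\sigma$, the outputs $S^*$ and $(S')^*$ remain disjoint from one another as well as from $\sigma$. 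Hence $\pi(S)$ and $\pi(S')$ are adjacent (or equal) vertices of $\mathcal{S}(W_n,\sigma)$, and $\pi$ is $1$-Lipschitz on edges of $\mathcal{S}(W_n)$.

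The main technical obstacle is the local analysis near $\sigma$: one must verify that the iterated surgery procedure preserves both essentiality and simple connectivity of complementary components, and that a compatible order of innermost disc surgeries can always be chosen for the combined system $S\cup S'$ (so that surgering for $S$ does not obstruct later surgery for $S'$ or vice versa). Both points are consequences of the fact that $\sigma$ is a sphere and $W_n$ is an orientable $3$-manifold with simply connected pieces after cutting, and they are the analogues in this setting of the standard compatibility of Hatcher's surgery flow; once they are in place, the theorem follows formally.
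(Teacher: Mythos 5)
Your proposal attempts a direct innermost-disk surgery in $W_n$ and claims that the resulting projection is canonical and Lipschitz, but this runs into exactly the obstruction that drives the structure of the paper's proof. The problem is that in $W_n$ (without a basepoint) an intersection circle $c$ of $S$ with $\sigma$ bounds \emph{two} disks on $\sigma$, and when both are disjoint from $S$ (which happens whenever $c$ is an innermost \emph{and} outermost circle, in particular whenever $c$ is the last remaining intersection circle) there is no preferred one. Choosing the other disk generally produces spheres in different homology classes, hence non-homotopic sphere systems. Your appeal to ``the standard analysis in \cite{Ha95}'' to conclude that $\pi(S)$ is independent of all choices is not supported by that reference: Hatcher's normal-form uniqueness concerns the position of a sphere relative to a fixed maximal system, not the well-definedness of an iterated surgery output. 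The paper flags this exact difficulty in the introduction --- ``there does not seem to be a canonical way to define such a projection'' --- and this is the reason Theorem~\ref{thm:sphere-retract} is proved in two steps: first in the based manifold $W_{n,1}$, where each circle has a \emph{canonical} closing disk (the one not containing the basepoint), making the projection $\pi_\sigma$ genuinely well defined and $1$-Lipschitz; and then by transferring back to $W_n$ via Mosher's quasi-isometric section $\Out(F_n)\to\Aut(F_n)$ together with the cleanup map $\mathcal R$ that removes ``superfluous'' intersections.

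Your Lipschitz argument inherits the same defect. You propose to surger $S$ and $S'$ ``in parallel'' inside the disjoint union $S\cup S'$, using innermost disks for the \emph{combined} system; but $\pi(S)$, as you defined it, uses innermost disks for $S$ \emph{alone}, and these can be different (a disk innermost for $S$ can contain circles of $S'$). Showing that the two outcomes are uniformly close is precisely the content you would need to prove, and the ambiguity in disk choice can propagate through a number of surgery steps that is unbounded in terms of the geometry, so there is no a priori bound on the drift. The paper's construction avoids this entirely: the closing disk assigned to a circle depends only on $\sigma$, the basepoint, and that circle --- not on the rest of the sphere system --- so the map is manifestly $1$-Lipschitz on $\mathcal{S}(W_{n,1})$. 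In short, the idea of producing the retraction by surgery near $\sigma$ is the right starting point, but the canonicity and Lipschitz claims need the basepoint mechanism (or some replacement for it), and as written the proposal has a genuine gap at both of these steps.
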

The construction of this Lipschitz retract has two main
steps. First, we will show a version of Theorem~\ref{thm:sphere-retract} for the
manifold $W_{n,1}$ with a basepoint. Then, in a second step, we
will reduce Theorem~\ref{thm:sphere-retract} to the basepointed case.

\subsection{Stabilizers with basepoint}
\label{sec:basepoint-stab}
In this section we are concerned with the basepointed manifold
$W_{n,1}$. We fix throughout two essential spheres $\sigma^-$ and $\sigma^+$
which bound a region homeomorphic to $S^2\times[0,1]$ containing the
basepoint of $W_{n,1}$. 
We call the sphere system $\sigma^\pm=\{\sigma^+,\sigma^-\}$ a \emph{basepoint
  sphere pair}. 
In particular, when ignoring the basepoint, $\sigma^-$ and $\sigma^+$
are isotopic in $W_n$.

We let $\mathcal{S}(W_{n,1},\sigma^\pm)$ be the
complete subgraph of $\mathcal{S}(W_{n,1})$ whose vertex set is the set of
homotopy classes of simple sphere systems which do not intersect
$\sigma^+$ and $\sigma^-$ (i.e. they contain $\sigma^\pm$ or are disjoint
from $\sigma^+$ and $\sigma^-$). We call such systems
\emph{compatible with $\sigma^\pm$}. 
In this section we prove the following.
\begin{theorem}\label{thm:basept-sphere-retract}
  Let $\sigma^\pm$ be a basepoint sphere pair.
  Then the subgraph $\mathcal{S}(W_{n,1},\sigma^\pm)$ is a
  Lipschitz retract of $\mathcal{S}(W_{n,1})$.
\end{theorem}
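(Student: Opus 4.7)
The plan is to construct the retraction $r\colon\mathcal{S}(W_{n,1})\to\mathcal{S}(W_{n,1},\sigma)$ via a canonical surgery along $\sigma$, using the transverse orientation. Given a simple sphere system $T$, isotope each sphere of $T$ either to coincide with $\sigma$, to be disjoint from $\sigma$, or to meet $\sigma$ transversely in a minimal collection of circles in its isotopy class. While $T\cap\sigma\neq\emptyset$, pick a circle $c\subset T\cap\sigma$ that is innermost \emph{in $\sigma$}; then $c$ bounds a disk $D\subset\sigma$ with $D^{\circ}$ disjoint from $T$, and $c$ splits the sphere of $T$ on which it lies into two disks $E_1,E_2$. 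Replace this sphere by the two spheres $E_1\cup D$ and $E_2\cup D$, each pushed slightly into the positive side of $\sigma$. Iterate until nothing of $T$ crosses $\sigma$, and let $r(T)$ be the resulting collection of spheres, together with $\sigma$ adjoined if necessary to obtain a simple sphere system.

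Standard arguments then establish that the surgery terminates (each step strictly decreases $|T\cap\sigma|$); that $r(T)$ is a simple sphere system compatible with $\sigma$, since each elementary surgery merely refines the complementary decomposition of $T\cup\{\sigma\}$; that $r(T)=T$ whenever $T\in\mathcal{S}(W_{n,1},\sigma)$, since then no surgery is performed; and that $r(T)$ is well-defined on isotopy classes. The last point uses uniqueness of minimal position representatives in the style of Laudenbach~\cite{L74}, together with the observation that the basepoint $p\in\sigma$ pins the ``positive side'' down through all admissible isotopies. Without the basepoint, one could isotope $\sigma$ to itself reversing its transverse orientation, and the construction would fail to be canonical; this is exactly why the theorem is proved in $W_{n,1}$ rather than in $W_n$.

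The main content is the $1$-Lipschitz property. Suppose $T,T'$ are simple sphere systems joined by an edge, and choose disjoint representatives. Place $T\cup T'$ in joint minimal position with $\sigma$, so that $(T\cup T')\cap\sigma$ is a disjoint union of circles in $\sigma$. Pick a circle innermost in $\sigma$ among these; it lies on a sphere of, say, $T$, and the disk $D\subset\sigma$ it bounds has interior disjoint from both $T$ and $T'$. Perform the surgery on this sphere along $D$, pushing the two new spheres into a positive collar of $\sigma$ so thin that it is disjoint from $T'$ away from $\sigma$. The new $T$ remains disjoint from $T'$, while $|(T\cup T')\cap\sigma|$ has strictly decreased. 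Iterating on both systems produces simultaneous disjoint representatives of $r(T)$ and $r(T')$, giving $d(r(T),r(T'))\le 1$. I expect the main obstacle to be precisely this joint surgery: the crucial conventions are to always choose disks innermost \emph{in $\sigma$} rather than on the individual spheres, and to push into a common positive collar of $\sigma$, so that surgeries applied to $T$ never force $T'$ to acquire new intersections.
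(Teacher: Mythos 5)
The proposal misses the key idea that makes the construction canonical and $1$-Lipschitz: the basepoint $p$ lies \emph{on} $\sigma$, so for \emph{every} intersection circle $\alpha$ of $T$ with $\sigma$ exactly one of the two disks of $\sigma$ bounded by $\alpha$ contains $p$. The paper's surgery caps each sphere piece along the disk \emph{not} containing $p$, a choice that depends only on the individual circle (and on $\sigma$, $p$), never on the rest of the system. You instead choose disks by an innermost-in-$\sigma$ criterion, which is a \emph{global} condition: whether a circle $c$ is innermost, and which of its two disks is ``the'' innermost one, depends on the full system $T$. This breaks both well-definedness (when $T\cap\sigma$ is a single circle, both disks are innermost, and the two possible surgeries produce genuinely different spheres) and the $1$-Lipschitz argument: in your joint surgery on $T\cup T'$, the disk used for a $T$-circle $c$ is the one with interior disjoint from $T\cup T'$, which may be a different disk --- even the complementary one --- from the disk used in computing $r(T)$ from $T$ alone (e.g.\ if a $T'$-circle sits inside the disk you would have used). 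The joint surgery therefore need not produce representatives of $r(T)$ and $r(T')$, and the inequality $d(r(T),r(T'))\le 1$ does not follow. By contrast, with the paper's basepoint-determined closing disks, $\Sigma\subset\Sigma'$ immediately gives $\pi_\sigma(\Sigma)\subset\pi_\sigma(\Sigma')$, which is the whole point.

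There are also two local problems. First, ``replace this sphere by the two spheres $E_1\cup D$ and $E_2\cup D$, each pushed slightly into the positive side of $\sigma$'' is not topologically coherent: near $c$ one of $E_1,E_2$ lies on the positive side of $\sigma$ and the other on the negative side, so $E_i\cup D$ must be pushed to the side $E_i$ already occupies near $c$; pushing the other piece to the positive side would reintroduce an intersection circle near $c$. Second, your explanation of the role of the basepoint (``pins the positive side down'') is off: a transverse orientation can be fixed once and for all regardless of basepoints, and this is not what distinguishes $W_{n,1}$ from $W_n$. What $p\in\sigma$ actually buys is the canonical local choice of closing disk for every boundary circle of every sphere piece, and it is precisely this locality --- not the orientation --- that makes the retraction both well-defined and $1$-Lipschitz.
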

As an immediate corollary one then obtains Theorem~1 from the
Introduction, by choosing a basepoint sphere pair $\sigma^\pm$ one of
whose complementary components is homeomorphic to $S^1\times S^2$
minus a ball.

\medskip
The main tool used in the proof of Theorem~\ref{thm:basept-sphere-retract} is a surgery procedure that
makes a given simple sphere system in $W_{n,1}$ disjoint from $\sigma^\pm$.
On the one hand, this surgery procedure is inspired by the construction used in
\cite{HV96} to show that the sphere system complex is contractible. 
On the other hand, it is motivated by the subsurface
projection methods of \cite{MM00}.

\medskip
By definition of a basepoint sphere pair, the connected component
$U^o$ of $W_{n,1}-\sigma^\pm$ which contains the basepoint is 
homeomorphic to $S^2\times(0,1)$. We
call $U^o$ the \emph{open product region associated to $\sigma^\pm$}. We further
define $N = W_{n,1} - U^o$ and call it \emph{the complement of
  $\sigma^\pm$}.
If $\sigma^+$ (or, equivalently, $\sigma^-$) is nonseparating, $N$ has
one connected component, and two otherwise. In any case, the boundary
of $N$ consists of $\sigma^+ \cup \sigma^-$.
We let $U = U^o \cup \sigma^\pm$ be the \emph{(closed) product region}
defined by $\sigma^\pm$.

\medskip
Consider now a simple sphere system $\Sigma$ in $W_{n,1}$. By applying a 
homotopy, we may assume that all intersections between 
$\Sigma$ and $\sigma^\pm$ (viewed as a sphere system) are transverse.
We say that $\Sigma$ and $\sigma^\pm$ \emph{intersect minimally} if 
the number of connected components of $\Sigma \cap \sigma^\pm$ is minimal
among all sphere systems homotopic to $\Sigma$ which intersect $\sigma^\pm$
transversely. 

Every simple sphere system $\Sigma$ can be changed by a homotopy to
intersect $\sigma^\pm$ minimally (for details, compare the discussion of
normal position in \cite{Ha95}). 
Unless stated otherwise, we will assume from now on that all spheres and
sphere systems intersect minimally.
Let $\Sigma' \supset \Sigma$ be a simple sphere system and suppose
that $\Sigma$ intersects $\sigma^\pm$ minimally. Then $\Sigma'$ can be
homotoped relative to $\Sigma$ to intersect $\sigma^\pm$ minimally.
In addition, the isotopy class of $\Sigma$ determines the isotopy
class of the intersection $\Sigma\cap\sigma^\pm$ and the isotopy classes
of the sphere pieces of $\Sigma$ defined below (this uniqueness is
also proved in \cite{Ha95}). 
 
The intersection of the spheres in $\Sigma$ with $N$ is a disjoint
union of properly embedded surfaces $C_1,\ldots, C_m$, possibly with boundary.
Each $C_i$ is a subsurface of a sphere in $\Sigma$, and thus it is 
a bordered sphere. If $\Sigma$ contains spheres disjoint from
$\sigma^\pm$ then some of the $C_i$ may be spheres without boundary
components.
We call the $C_i$ the \textit{sphere pieces in $N$} defined by $\Sigma$.

Similarly, the intersection of $\Sigma$ with $U$ is also a disjoint
union of properly embedded surfaces which we call the \emph{sphere
  pieces in $U$} (Figure~\ref{fig:basepoint-pair}). 
By minimal intersection, each such surface is
either an annulus $A$ joining the two boundary spheres of $U$, or a disk $D$
which separates the basepoint in $U$ from the boundary component that
does not intersect $D$ (see Figure~\ref{fig:basepoint-pair}).
  \begin{figure}[h]
    \centering
    \def\svgwidth{0.5\textwidth}
    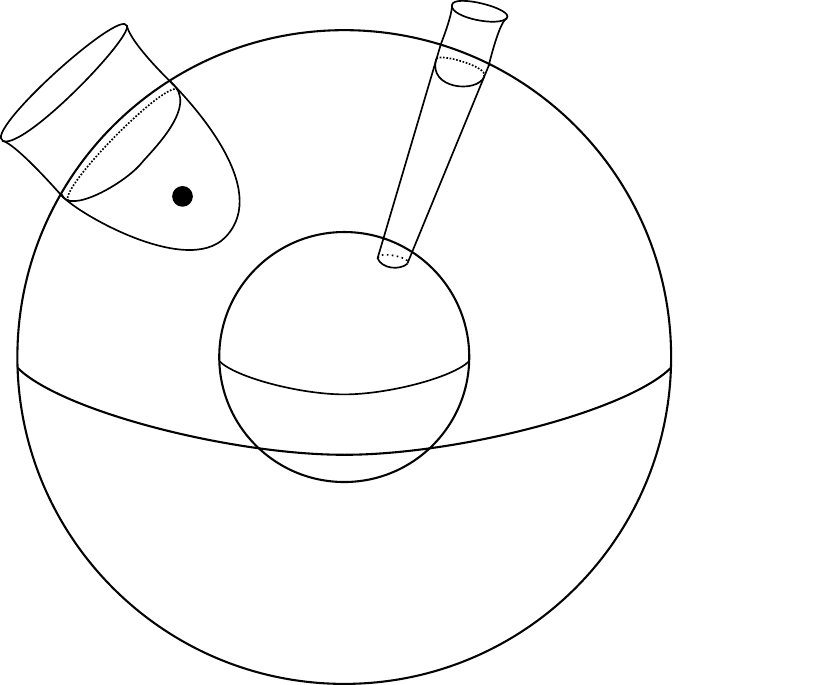  
    \caption{A basepoint pair and two sphere pieces in $U$. The
      basepoint is depicted as the thick black dot. $A$ is contained
      in the inner side of $D$.}
    \label{fig:basepoint-pair}
  \end{figure}
In particular, every connected component of $\Sigma\cap U$ separates
$U$, and exactly one of the complementary components contains the
basepoint. We call this component the \emph{outer side} (motivated by the
intuition that the basepoint is a ``boundary at infinity'').
An intersection circle $\alpha \subset \Sigma \cap \sigma^\pm$ is the
boundary circle of a sphere piece in $U$, and therefore inherits an
outer and inner side on the sphere of $\sigma^\pm$ containing it.

In particular, we will speak about the \emph{inner disk} that a
boundary circle of $C_i$ bounds and mean 
the disk on $\sigma^\pm$ which is disjoint from the outer side of the
corresponding sphere piece in $U$.

\medskip
\begin{lemma}\label{lem:properly-nested}
  Let $D_1,D_2$ be two inner disks for boundary components
  $\alpha_1,\alpha_2$ of possibly different sphere pieces $C_1,C_2$ in
  $N$.

  Then either $D_1$ and $D_2$ are disjoint, or one is properly
  contained in the other.
\end{lemma}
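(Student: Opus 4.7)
The plan is to reduce the statement to a purely planar observation about two disjoint circles on the $2$--sphere $\sigma$, and then use the defining property of closing disks (that they avoid the basepoint) to eliminate one of the four a priori possible configurations.

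First I would observe that the boundary curves $\alpha_1 = \partial D_1$ and $\alpha_2 = \partial D_2$ are disjoint simple closed curves on $\sigma$. Indeed, the sphere pieces $C_1, C_2$ are pieces of (possibly distinct) spheres of the simple sphere system $\Sigma$; since spheres in $\Sigma$ are pairwise disjoint and each intersects $\sigma$ transversely, the collection of all boundary circles of all sphere pieces on $\sigma$ is a disjoint union of embedded circles. In particular $\alpha_1 \cap \alpha_2 = \emptyset$, even if $C_1 = C_2$.

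Next I would classify the configurations of two disks on $S^2 \cong \sigma$ with disjoint boundary circles. Each $\alpha_i$ separates $\sigma$ into two closed disks, and disjointness of $\alpha_1$ and $\alpha_2$ forces $\alpha_2$ to lie entirely in one of the two disks bounded by $\alpha_1$ (and symmetrically). A straightforward enumeration then shows that exactly four topological configurations of the ordered pair $(D_1, D_2)$ are possible:
\begin{enumerate}[(i)]
\item $D_1 \cap D_2 = \emptyset$,
\item $D_1 \subsetneq D_2$,
\item $D_2 \subsetneq D_1$,
\item $D_1 \cup D_2 = \sigma$, in which case $D_1 \cap D_2$ is the closed annulus cobounded by $\alpha_1$ and $\alpha_2$.
\end{enumerate}

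The decisive step is to rule out configuration (iv) using the basepoint $p$. If $D_1 \cup D_2 = \sigma$, then $p \in \sigma$ forces $p \in D_1$ or $p \in D_2$; but by the very definition of the closing disk, $p$ lies in neither $D_1$ nor $D_2$, a contradiction. The remaining three configurations are exactly the conclusion of the lemma. There is no real obstacle here; the only subtlety is the clean case analysis, and the essential content is the elementary fact that among all disk/disk configurations on $S^2$ with disjoint boundaries, only the one forbidden by the basepoint condition produces a non-nested, non-disjoint pair.
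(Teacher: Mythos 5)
Your proof is correct and takes essentially the same approach as the paper: both arguments observe that $\alpha_1$ and $\alpha_2$ are disjoint because $\Sigma$ is embedded, and both use the fact that the basepoint lies in neither $D_1$ nor $D_2$ to eliminate the one non-nested, non-disjoint configuration. Your explicit four-case enumeration is a harmless reframing of the paper's case analysis on whether $\alpha_2 \subset D_1$ or $\alpha_1 \subset D_2$.
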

\begin{proof}
  Since $\Sigma$ is embedded, the two circles $\alpha_1,\alpha_2$ are
  disjoint. 
  We may assume that $\alpha_1$ and $\alpha_2$ lie on the same
  boundary component, say $\sigma^+$, as otherwise the claim of the
  lemma is obvious (the inner disks are contained in disjoint spheres).

  Let $C'_1, C'_2$ be the surface pieces in $U$ which have
  $\alpha_1, \alpha_2$ as one of their boundary circles.
  Suppose by contradiction that $D_1, D_2$ are neither disjoint nor
  nested. Then $D_1\cup D_2 = \sigma^+$, and therefore the union of
  the inner sides of $C'_1$ and $C'_2$ is all of $U$. This is
  impossible, since the basepoint in $U$ is contained in neither of
  the two inner sides.
\end{proof}
We abbreviate the conclusion of this lemma by saying that all inner
disks are \emph{properly nested if they intersect}.

\medskip
Let $C$ be one of the sphere pieces of $\Sigma$ in $N$, and let 
$\alpha_1,\ldots,\alpha_k$ be its boundary components on $\partial N$. 
Note that $k$ may be arbitrary large, as opposed to the 
case of surfaces: the intersection of a simple closed curve
with an essential subsurface $Y$ is a union of arcs each of which intersects
the boundary of $Y$ in exactly two points.

Let $\hat{C}$ be the surface obtained from $C$ by gluing the inner
disk $D_i$ along $\partial D_i$ to $\alpha_i$ for each boundary
component $\alpha_i$ of $C$. Since $C$ is a bordered sphere, the  
surface $\hat{C}$ is an immersed sphere in $N$ (which may be inessential). 
We say that $\hat{C}$ is obtained from $C$ by \emph{capping off the
  boundary components.}

\begin{lemma}\label{lem:spheres-admissible-implies-embedded}
  Every sphere obtained by capping off the boundary components of a sphere
  piece is embedded up to homotopy. Furthermore, the 
  spheres obtained by capping off the boundary components of all
  sphere pieces can be embedded disjointly.
\end{lemma}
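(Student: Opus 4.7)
My plan is to construct the embedded capped spheres by pushing each closing disk slightly off $\sigma$ into the complement $N$, with push-depth determined by the nesting depth of the disk in the forest of closing disks provided by the preceding lemma.

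Fix a product collar $\sigma\times[-1,1]$ of $\sigma$ in $W_{n,1}$ compatible with the given transverse orientation. Each sphere piece $C$ lies in $N$, so near every one of its boundary circles $\alpha_i$ the piece $C$ emerges from $\sigma$ into one definite side of the collar; the closing disk $D_i$ for $\alpha_i$ will be pushed into that same side. Consider the finite collection of all closing disks of all boundary circles of all sphere pieces at once. By the preceding lemma, any two of them are either disjoint or strictly nested, so this collection is a forest under inclusion. Choose a single function assigning to each $D_i$ a height $\epsilon_i>0$ that is strictly increasing along chains of this forest, with innermost disks receiving the smallest values and all $\epsilon_i$ small enough to lie in the collar. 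Replace each $D_i$ by the embedded bucket
\[
B_i=(\alpha_i\times[0,\epsilon_i])\cup(D_i\times\{\epsilon_i\}),
\]
situated on the side of the collar designated for $\alpha_i$ and bounded by $\alpha_i\times\{0\}=\alpha_i$.

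After a small isotopy of each sphere piece $C$ near its boundary circles that bends $C$ smoothly into the walls of its buckets, the surface $C\cup B_1\cup\cdots\cup B_k$ is an embedded sphere homotopic in $W_{n,1}$ to the naive capping $C\cup D_1\cup\cdots\cup D_k$. Any two buckets $B_i$, $B_j$ are disjoint in every case: on opposite sides of $\sigma$ this is immediate; on the same side with disjoint base disks, the buckets have disjoint horizontal footprints; and on the same side with $D_i\subsetneq D_j$, the strict inequality $\epsilon_i<\epsilon_j$ together with $\alpha_j\cap D_i=\emptyset$ forces $B_i$ to lie strictly below the top $D_j\times\{\epsilon_j\}$ of $B_j$ and away from its wall $\alpha_j\times[0,\epsilon_j]$. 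Exactly the same argument across distinct sphere pieces yields global disjointness.

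The main point requiring care is the \emph{global} consistency of the height function: we need a single strictly monotone assignment $\epsilon$ covering every closing disk of every sphere piece simultaneously, rather than one chosen piece-by-piece. This is possible precisely because the nesting dichotomy established in the preceding lemma applies uniformly to any two closing disks, irrespective of which sphere pieces they come from, so the union of all closing disks really does form a single forest on which such an $\epsilon$ exists.
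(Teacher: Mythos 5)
Your argument is correct, and it follows a genuinely different route from the paper. The paper proves the lemma by \emph{induction} on the number of closing disks: it repeatedly selects a single innermost closing disk, pushes it slightly into $N$ to cap off one boundary circle of one sphere piece, observes that the new collection of sphere pieces still has properly nested closing disks, and recurses. You instead resolve \emph{all} boundary circles simultaneously by constructing a global height function $\epsilon$ on the forest of closing disks, with nested disks assigned strictly increasing heights, and capping each $\alpha_i$ with the pushed-off ``bucket'' $B_i$ at height $\epsilon_i$. Both arguments hinge on exactly the same key fact (the nesting dichotomy from the preceding lemma), but yours trades the induction for a one-shot explicit embedding, which arguably makes the final embedded picture more transparent. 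One small point you gloss over with ``exactly the same argument across distinct sphere pieces'': beyond pairwise disjointness of buckets, you also need that a bucket $B_j$ belonging to one capped sphere does not meet the interior part of a \emph{different} sphere piece $C_\ell$. This follows once all $\epsilon_i$ are chosen small enough that the collar $\sigma\times[0,\max_i\epsilon_i]$ meets each sphere piece only in vertical annular neighborhoods of its boundary circles, which you do hint at with ``small enough to lie in the collar,'' but it is worth stating explicitly since it is the place where the collar hypothesis does real work.
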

\begin{proof}
  Let $\mathcal{C}$ be the collection of sphere pieces in $N$ and let
  $\D$ be the set of all inner disks for boundary components of
  $C_i\in\mathcal{C}$. By Lemma~\ref{lem:properly-nested}, $\D$ is a
  collection of properly nested disks. If $\D$ is empty, there is
  nothing to show. 

  Otherwise, say that a disk $D \in \D$ is \emph{innermost} if 
  $D \subset D'$ for every $D' \in \D$ with $D \cap D' \neq
  \emptyset$.
  Since intersecting disks in $\D$ are properly nested, there is at
  least one innermost disk $D_1$ bounded by a curve $\alpha_1$ which
  is the boundary of a sphere piece $C_1$.

  We glue $D_1$ to the corresponding sphere piece $C_1$ and then
  slightly push $D_1$ inside $N$ with a homotopy to obtain a properly 
  embedded bordered sphere $C'_1$ in $N$. Since $D_1$ is innermost, this sphere 
  is disjoint from all sphere pieces $C_k \neq C_1$, and has one
  less boundary component than $C_1$. 

  Now let $\mathcal{C}'$ be the collection of bordered spheres
  obtained from $\mathcal{C}$ by replacing $C_1$ with $C'_1$.
  This is still a collection of disjointly embedded bordered
  spheres. Furthermore, the collection $\mathcal{D}' = \D - \{D_1\}$
  is a collection of properly nested disks, one for each boundary
  circle of a sphere in $\mathcal{C}'$.
  Thus, we can inductively repeat the construction with $\mathcal{C}'$ and
  $\mathcal{D}'$, and the lemma follows.
\end{proof}

We let ${\mathcal P}(\Sigma)$ be the collection
of disjointly embedded spheres obtained by capping off the boundary components of each
sphere piece of $\Sigma$. The set 
${\mathcal P}(\Sigma)$ may contain
inessential spheres and parallel spheres in the same homotopy
class.
We denote by $\pi_{\sigma^\pm}(\Sigma)$ the sphere system obtained as the
union of $\sigma^\pm$ with one representative for each essential
homotopy class of spheres occurring in ${\mathcal P}(\Sigma)$. 
To show that the sphere system obtained in this way from a simple sphere system 
$\Sigma$ is again simple, we require the following topological lemma.

\begin{lemma}\label{lem:pushoff}
  Let $C$ be a sphere piece in $N$ intersecting the boundary of $N$ in
  at least one curve $\alpha$. Let $D \subset \partial N$ be an innermost disk
  with $\partial D = \alpha$. Let $C'$ be the sphere piece obtained by
  gluing $D$ to $C$ and slightly pushing $D$ into $N$ (which might be
  a sphere without boundary components).

  Then every closed
  curve in $N$ which can be homotoped to be disjoint 
  from $C'$ can also be homotoped to be disjoint from $C$.
\end{lemma}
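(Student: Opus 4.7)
The plan is to exploit the fact that $C$ and $C'$ agree outside a small neighborhood of $\alpha$ and together with $D$ cobound a three-ball $B \subset N$. Concretely, $\partial B$ decomposes as $A \cup D \cup D'$, where $A \subset C$ is the annular collar of $\alpha$ sitting in $\partial B$, $D \subset \partial N$ is the capping disk, and $D' \subset C'$ is the pushed-off copy of $D$ bounded by a parallel circle $\alpha'$ of $\alpha$. First I would carry out this setup, taking the pushoff sufficiently small that $C \cap \overline B = A$ and that $C$ agrees with $C'$ off $B$.

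Given a closed curve $\gamma \subset N$ disjoint from $C'$ and transverse to $C$, it then follows that $\gamma \cap C \subset A$ and that every arc of $\gamma \cap \overline B$ has both endpoints on $A$: such an endpoint cannot lie on $D \subset \partial N$ because $\gamma$ lies in the interior of $N$, nor on $D' \subset C'$ by the disjointness hypothesis.

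The heart of the argument is a pushoff of these arcs across $A$. Fix a bicollar $A \times (-\varepsilon, \varepsilon) \subset N$ of $A$ with $A \times [0, \varepsilon) \subset B$, and set $R = \overline B \cup (A \times (-\varepsilon, 0])$. Since $R$ deformation retracts onto $\overline B$, it is contractible. For each arc $\eta$ of $\gamma \cap \overline B$ I would slightly extend $\eta$ to $\tilde \eta \subset R$ whose endpoints lie on a parallel copy $A \times \{-\delta\}$, and replace it by an arc $\tilde \eta'$ with the same endpoints lying entirely in $A \times \{-\delta\}$. Simple connectivity of $R$ yields a homotopy $\tilde \eta \simeq \tilde \eta'$ rel endpoints inside $R \subset N$; performing these replacements arc by arc produces a free homotopy of $\gamma$ in $N$ to a new curve $\gamma'$ which, in the modified region, is disjoint from $C$.

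Finally I would verify $\gamma' \cap C = \emptyset$ globally: off the modified region $\gamma'$ coincides with $\gamma$, which is already disjoint from $C = C'$ there, and on the modified region the new arcs sit on a parallel copy of $A$ at positive distance from $A \subset C$. The one subtlety, which I expect to be the main obstacle, is to choose the bicollar small enough that it meets $C$ only in $A$, so that $\tilde \eta'$ is disjoint from the rest of $C$; this has to be done with some care near the inner boundary $\alpha'$ of $A$, where $A$ is adjacent to $C \setminus A$, but is controlled by the embeddedness of $C$ together with the freedom to shrink $\varepsilon$ and $\delta$.
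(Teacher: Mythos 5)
Your proof is correct and follows essentially the same route as the paper's: both identify the solid cylinder (your $B$, the paper's $Q$) traced out by the pushoff, with boundary decomposed as $A\cup D\cup D'$, observe that any arc of the curve inside it must have both endpoints on the annulus $A$, and then use simple connectivity of the cylinder to homotope those arcs out past $A$. The only difference is presentational: you spell out the final pushoff step more carefully via the bicollar $A\times(-\varepsilon,\varepsilon)$ and the enlarged contractible region $R$, where the paper just says "slightly pushing each of these arcs off $A$."
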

\begin{proof}  
  Pushing the disk $D$ slightly inside of $N$ with a homotopy traces
  out a three-dimensional cylinder $Q$ in $N$. The boundary of $Q$
  consists of two disks (the disk $D$, and the image of $D$ under the
  homotopy) and an annulus $A$ which can be chosen to lie in $C$ (see
  Figure~\ref{fig:pushoff} for an example).

  \begin{figure}[h]
    \centering
    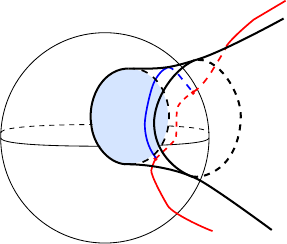  
    \caption{Reducing the number of boundary components of a sphere piece.}
    \label{fig:pushoff}
  \end{figure}

  Suppose that $\beta$ is a closed curve in $N$ which is disjoint from
  $C'$ but not from $C$. Then any intersection point between $\beta$
  and $C$ is contained in the annulus $A$. Up to 
  homotopy, the intersection between $\beta$ and $Q$ is a disjoint
  union of arcs connecting $A$ to itself.
  Since $Q$ is simply connected, each of these arcs can be moved by a
  homotopy relative to its endpoints to be contained entirely in $A$.
  Slightly pushing each of these arcs off $A$ then yields the desired
  homotopy that makes $\beta$ disjoint from $C$.
\end{proof}

\begin{lemma}\label{lem:result-simple-spheres}
 Let $\Sigma$ be a simple sphere system. Then $\pi_{\sigma^\pm}(\Sigma)$ 
  is a simple sphere system.
\end{lemma}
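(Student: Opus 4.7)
The plan is to verify that $\pi_\sigma(\Sigma)$ satisfies both defining conditions of a simple sphere system: its elements are pairwise disjoint, essential, and pairwise non-homotopic, and each complementary component in $W_{n,1}$ is simply connected. The first condition is immediate from the construction: Lemma~\ref{lem:spheres-admissible-implies-embedded} produces disjointly embedded representatives of the capped-off sphere pieces in the interior of $N$, so they are disjoint from $\sigma$, and the definition of $\pi_\sigma(\Sigma)$ discards inessential spheres while retaining one representative per essential homotopy class.

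For the simple-connectivity condition, let $U$ be a complementary component of $\pi_\sigma(\Sigma)$ in $W_{n,1}$ and let $\beta$ be a loop in $U$. I will first show that $\beta$ is null-homotopic in $W_{n,1}$, and then upgrade this to a null-homotopy inside $U$. Since $\sigma \in \pi_\sigma(\Sigma)$, we may view $\beta$ as a closed curve in $N$ disjoint from every capped-off sphere. The capping procedure in the proof of Lemma~\ref{lem:spheres-admissible-implies-embedded} is inductive, attaching innermost closing disks one at a time; reversing that order and invoking Lemma~\ref{lem:pushoff} at each step produces a sequence of homotopies of $\beta$ performed inside $N$, so $\beta$ remains disjoint from $\sigma$ throughout, and after all steps $\beta$ is disjoint from every sphere piece of $\Sigma$, hence from $\Sigma$ itself. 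Since $\Sigma$ is a simple sphere system, the resulting curve is null-homotopic in $W_{n,1}$, and therefore so is $\beta$. To promote this to a null-homotopy inside $U$, put a null-homotopy disk $f\colon D^2 \to W_{n,1}$ of $\beta$ in general position with $\pi_\sigma(\Sigma)$, so that the preimages $f^{-1}(S)$ for $S \in \pi_\sigma(\Sigma)$ form a finite disjoint union of simple closed curves in the interior of $D^2$. An innermost such curve $c$ bounds a subdisk $D' \subset D^2$ whose image $f(c)$ lies on a single sphere $S$, and $f(c)$ bounds a disk on $S$. Replacing $f|_{D'}$ by this disk, pushed slightly off $S$, strictly reduces the number of intersection curves; iterating yields a null-homotopy disk disjoint from $\pi_\sigma(\Sigma)$ and hence contained in $U$.

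The main technical obstacle is making the iterated application of Lemma~\ref{lem:pushoff} rigorous: each invocation produces a local homotopy inside a cylinder $Q \subset N$ near the chosen closing disk, and when iterating one must arrange that the cylinders used in successive steps do not interfere with the partially-capped sphere pieces that remain. The properly nested structure of closing disks established in Lemma~\ref{lem:spheres-admissible-implies-embedded} supplies the necessary compatibility: processing outermost closing disks first keeps each cylinder $Q$ in a region disjoint from the remaining capped pieces, so successive homotopies can be concatenated into a single well-defined homotopy of $\beta$ inside $N$.
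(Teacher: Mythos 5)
Your argument is correct and takes essentially the paper's route: both proofs hinge on iterating Lemma~\ref{lem:pushoff} to reduce the simplicity of $\pi_\sigma(\Sigma)$ to that of $\Sigma$. Two remarks on the details. First, the paper phrases the induction forward --- the invariant ``no essential simple closed curve in $N$ is disjoint from the system'' is preserved by each capping step --- and then asserts simplicity; you run it backward on a fixed loop $\beta$, and you additionally spell out, via an innermost-disk swap, why the resulting null-homotopy of $\beta$ can be taken inside the component $U$. That last step the paper leaves implicit (it amounts to the standard fact that $\pi_1$ of a complementary region of an embedded sphere system injects into $\pi_1$ of the ambient manifold), so your version is a bit more self-contained. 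Second, there is a small gap to fill: you assert that $\beta$ is ``disjoint from every capped-off sphere,'' but $\beta\subset U$ is a priori only disjoint from $\pi_\sigma(\Sigma)$, which has discarded the inessential and duplicate spheres of $\mathcal{S}(\Sigma)$. Before reversing the capping process you need to note (as the paper does in one sentence) that $\beta$ can first be homotoped off an inessential sphere through the ball it bounds, and off a duplicate through the product region between it and its retained parallel copy. Your closing paragraph about compatibility of the cylinders is a fair caution, but the nesting and innermost choices already keep each cylinder away from the other sphere pieces of the system at that stage, so no further argument is required there.
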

\begin{proof}
  Let $\Sigma$ be a simple sphere system. As $\pi_{\sigma^\pm}(\Sigma)$
  contains $\sigma^\pm$ by construction, it  
  suffices to show that the spheres $S \in \pi_{\sigma^\pm}(\Sigma)$ which are
  distinct from $\sigma^\pm$ decompose $N$ into simply connected regions.
 
  Since the fundamental group of $N$ injects into the fundamental
  group of $W_{n,1}$ and $\Sigma$ is a simple sphere
  system, no essential simple closed curve in $N$ is disjoint from
  $\Sigma\cap N$. In other words, no essential simple closed curve in
  $N$ is disjoint from all sphere pieces defined by $\Sigma$.

  By Lemma~\ref{lem:pushoff}, this property is preserved under capping 
  off one boundary component on a sphere piece. By induction, no
  essential simple closed 
  curve in $N$ is disjoint from all spheres $S \in \mathcal{S}(\Sigma)$.
  Removing inessential spheres and parallel copies of the same sphere
  from $\mathcal{S}(\Sigma)$ does not affect this property.

  This implies that $\pi_{\sigma^\pm}(\Sigma)$ is a simple sphere system
  as claimed.
\end{proof}

\begin{proof}[Proof of Theorem~\ref{thm:basept-sphere-retract}.]
  The image $\pi_{\sigma^\pm}({\mathcal S}(W_{n,1}))$ of the map 
  $\pi_{\sigma^\pm}$ is contained in the subgraph
  $\mathcal{S}(W_{n,1},\sigma^\pm)$, and $\pi_{\sigma^\pm}$ restricts to 
  the identity on the vertex set of $\mathcal{S}(W_{n,1},\sigma^\pm)$. It remains
  to show that it is Lipschitz. 
  
  By using Lemma~\ref{lem:inclusion-exclusion}, it suffices to
  consider the case of two sphere systems $\Sigma \subset \Sigma'$.
  Let $C$ be a sphere piece of $\Sigma$ and let $\sigma$ be the sphere
  in $\Sigma$ containing $C$. Note that the inner disks of the
  boundary circles of $C$ depend only on $\sigma$, not on $\Sigma$.
  This observation implies that
  $\pi_{\sigma^\pm}(\Sigma)\subset\pi_{\sigma^\pm}(\Sigma')$ and in particular
  $\pi_{\sigma^\pm}(\Sigma),\pi_{\sigma^\pm}(\Sigma')$ are
  disjoint. Thus, $\pi_{\sigma^\pm}$ is $K$-Lipschitz, where $K\geq 1$
  is as in Lemma~\ref{lem:inclusion-exclusion}.  
\end{proof}

\subsection{Acquiring a base point}
\label{sec:lifting}
In this section we extend the result from
Section~\ref{sec:basepoint-stab} to the case of the manifold $W_n$
without a base point. To begin, recall the short exact
sequence
\begin{equation}\label{nosplit}
1\to F_n \to \mathrm{Aut}(F_n) \to \mathrm{Out}(F_n) \to 1\end{equation}
since $F_n$ is center-free. 
Similarly, there is a natural map 
$$\mathcal{S}(W_{n,1}) \to \mathcal{S}(W_n)$$
obtained by ``forgetting the base point''. 
This map between graphs is
compatible in with the projection $\Aut(F_n)\to\Out(F_n)$ in the
following sense: by Laudenbach's
Theorems~\ref{thm:laudenbach-unpointed}
and~\ref{thm:laudenbach-pointed}, $\Aut(F_n)$ and $\Out(F_n)$ act on
$\mathcal{S}(W_{n,1}), \mathcal{S}(W_n)$ and these actions are
equivariant with respect to the forgetful map.

By a result of Mosher \cite{Mo96}, since $F_n$ is a nonelementary
word-hyperbolic group, there is a quasi-isometric section\\
\[\Out(F_n)\to\Aut(F_n).\]
Such a section is however by no means unique or canonical.
The graphs $\mathcal{S}(W_{n,1})$ and
$\mathcal{S}(W_n)$ are quasi-isometric to $\Aut(F_n)$ and $\Out(F_n)$
using the orbit map, and therefore Mosher's theorem yields a
quasi-isometric section $s: \mathcal{S}(W_n)\to \mathcal{S}(W_{n,1})$
to the natural forgetful map. 

\medskip
Let $\sigma$ be a sphere in $W_n$, and let $\sigma^\pm$ be a
basepoint sphere pair in $W_{n,1}$ both of whose spheres are homotopic
to $\sigma$ as spheres in $W_n$.

We now describe a procedure which,
intuitively speaking, removes all intersections of spheres in
$W_{n,1}$ with $\sigma^\pm$ which could be removed in $W_n$.  

\medskip
Let $\Sigma$ represent a vertex in $\mathcal{S}(W_{n,1})$. 
We say that an intersection circle $\alpha$ of $\Sigma$ with $\sigma^\pm$ is
\emph{superfluous}, if it bounds a disk $D'\subset\sigma^\pm$ and a
disk $D\subset\Sigma$ such that $D \cup D'$ is inessential in 
$W_n$ (see Figure~\ref{fig:superfluous}).
If furthermore $D$ intersects $\sigma^\pm$ in the single circle
$\partial D$, then we say that $D'$ is a \emph{superfluous surgery
  disk} with \emph{corresponding disk $D$}. The terminology is
well-defined by the lemma below. 

  \begin{figure}[h]
    \centering
    \def\svgwidth{0.7\textwidth}
    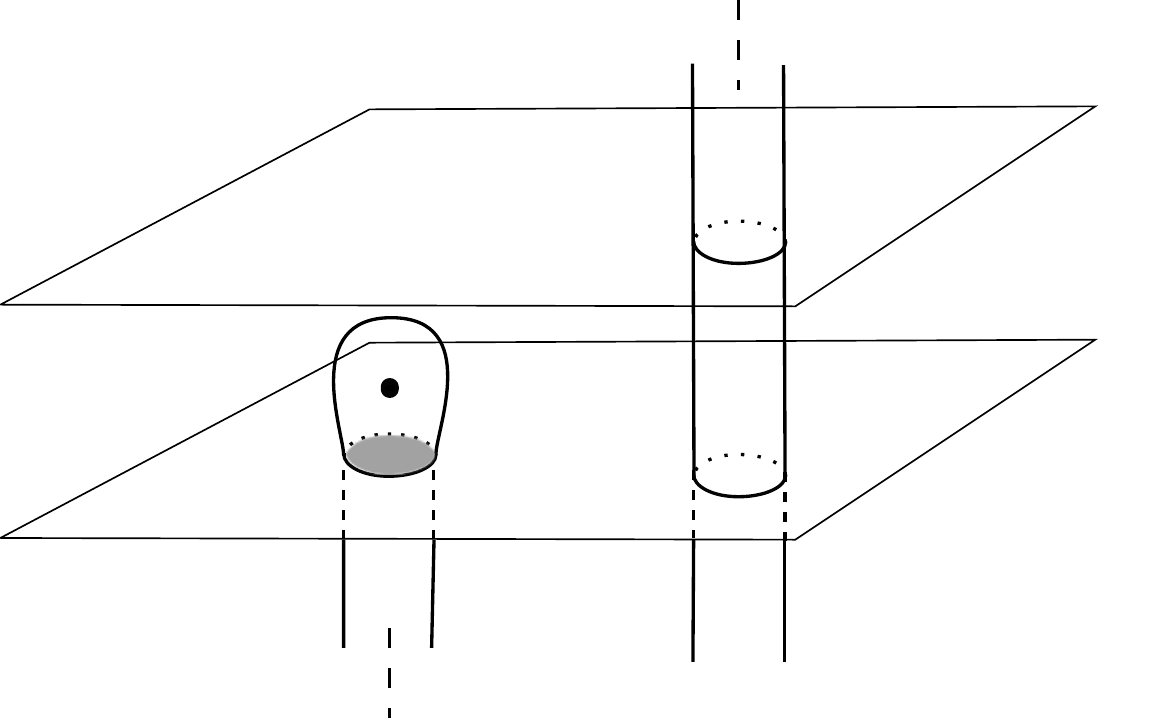  
    \caption{Superfluous circles and disks. The disk $D'$ is a
      superfluous surgery disk. The other intersection circle is also
      superfluous (the union $D\cup A$ of the disk $D$
      and the annulus $A$ yields the desired disk), but
      does not bound a superfluous surgery disk.}
    \label{fig:superfluous}
  \end{figure}

\begin{lemma}\label{lem:nesting-superfluous-disks}
  Suppose $\Sigma$ and $\sigma^\pm$ are in minimal position as sphere
  systems in $W_{n,1}$.
  \begin{enumerate}[i)]
  \item A superfluous intersection circle $\alpha$ of $\Sigma$ with
    $\sigma^\pm$ bounds at most one superfluous surgery disk
    $D'\subset\sigma^\pm$. Furthermore, the corresponding disk
    $D\subset\Sigma$ is also well-defined.

  \item Any two superfluous surgery disks are properly contained in
    each other.

  \item Any intersection circle of $\Sigma$ with $\sigma^\pm$ which is
    contained in a superfluous surgery disk also bounds a superfluous
    surgery disk.

  \item Given a superfluous intersection circle $\alpha$, both
    the superfluous surgery disk and the corresponding disk in
    $\Sigma$ bounded by $\alpha$ only depend on the isotopy class of
    the sphere in $\Sigma$ containing $\alpha$ (not the one of $\Sigma$).
  \end{enumerate}
\end{lemma}
\begin{proof}
  Let $\alpha$ be an intersection circle of $\Sigma$ with
  $\sigma^\pm$. Suppose that $D'\subset\sigma^\pm$ and
  $D\subset\Sigma$ are disks such that $D\cup D'$ is an inessential sphere in
  $W_n$. 

  Then $D\cup D'$ bounds a ball in $B$ in $W_n$. Since we assume that
  $\Sigma$ and $\sigma^\pm$ are in minimal position as sphere systems
  in $W_{n,1}$, they do not bound a ball in $W_{n,1}$ (otherwise, one
  could homotope one of the spheres through this ball to reduce
  intersection). Thus, the ball $B$, seen as a subset of $W_{n,1}$,
  contains the base point and therefore intersects $U$.

  We first show that $D \subset U$ by contradiction. If $D$ is not in
  $U$, then it is contained in $N$. By minimal position of $\Sigma$
  and $\sigma^\pm$, the disk $D$ cannot be homotoped
  relative to its boundary into $\partial N$. Thus, either $N-D$ is connected or
  both components of $N-D$ admit curves which are essential in $N$.
  Hence, for any disk $S'\subset \partial N$, the sphere $D\cup S'$ is
  also either nonseparating in $N$, or admits essential curves in both
  components of $N-(D\cup S')$. Hence, it is essential, contradicting
  the fact that $D$ corresponds to a superfluous surgery disk. 

  Now, for a disk $D$ in $U$ it is easy to see that gluing the disk
  in $\sigma^\pm$ to $D$ which is contained in 
  the outer complementary component yields a sphere which is
  inessential in $W_n$. Gluing on the disk in the inner component
  yields a sphere homotopic to one of the spheres in
  $\sigma^\pm$ (compare Figure~\ref{fig:basepoint-pair}) which is therefore
  essential in $W_n$.
  This shows the desired uniqueness statements in i). 

  The same observation also proves statement ii): the outer components are
  nested for disjoint disks in $U$.
  
  To see iii), suppose that $\hat{\alpha}$ is an intersection circle
  contained in $D'$ as above. Consider the sphere piece $\hat{D}$ in $U$ which
  contains $\hat{\alpha}$ in its boundary. $D$ separates $U$, and
  $\hat{D}$ is contained in the component containing the basepoint by
  definition of superfluous surgery disks. Thus, $\hat{D}$ is itself
  a disk (by minimal position), and the disk bounded by $\hat{\alpha}$
  in $D'$ is a superfluous surgery disk $\hat{D}'$.

  The final claim iv) of the lemma follows, since to detect if a circle or
  disk is superfluous, only information about the sphere containing it
  is needed.
\end{proof}
Next, we describe a canonical way to remove a superfluous surgery disk.
Combining parts ii) and iii) of
Lemma~\ref{lem:nesting-superfluous-disks}, there is an innermost one,
say $D'$. 

By Lemma~\ref{lem:nesting-superfluous-disks}~i) above, there is then
also a unique subdisk $D\subset \Sigma$ such that $D\cup D'$ is inessential in
$W_n$. The \emph{surgery at $D$} is the sphere system obtained
by replacing $D$ by $D'$ (thereby pushing the sphere system across
the basepoint). Note that while this changes the 
isotopy type of the sphere system in $W_{n,1}$, the result still
defines the same sphere system in $W_n$.

\medskip
We now define a ``cleanup'' map $\mathcal{C}:\mathcal{S}(W_{n,1}) \to
\mathcal{S}(W_{n,1})$ in the following way. Let $\Sigma \in \mathcal{S}
(W_{n,1})$ be given. 
Define $\mathcal{C}(\Sigma)$ to be the result of performing surgery at
an innermost superfluous surgery disk, then isotoping the system to be in
minimal position again, and repeating this process
until there are no superfluous surgery disks left. 
Since surgering a superfluous disk does
not change the homotopy type in $W_n$, the result is again a simple
sphere system.

\begin{lemma}\label{lem:r-lipschitz}
  $\mathcal{C}(\Sigma)$ is Lipschitz.
\end{lemma}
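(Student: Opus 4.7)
My plan is to prove that $\mathcal{R}$ is $1$-Lipschitz by showing that if $\Sigma,\Sigma'$ are adjacent vertices of $\mathcal{S}(W_{n,1})$, i.e.\ disjoint simple sphere systems, then $\mathcal{R}(\Sigma)$ and $\mathcal{R}(\Sigma')$ can be realized disjointly. The cornerstone of the argument is Lemma~\ref{lem:nesting-superfluous-disks}: the superfluous surgery disk associated to an intersection circle $\alpha$ depends only on the isotopy class of the sphere of $\Sigma$ containing $\alpha$, not on the ambient sphere system. This strongly suggests that the surgeries defining $\mathcal{R}(\Sigma)$ and $\mathcal{R}(\Sigma')$ can be performed simultaneously inside the combined system without interfering with each other.

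Concretely, I would regard $\Sigma \cup \Sigma'$ as a (not necessarily simple) sphere system in $W_{n,1}$ intersecting $\sigma$ minimally, and iteratively surger innermost superfluous intersections of $\Sigma \cup \Sigma'$ with $\sigma$ following the recipe used in the definition of $\mathcal{R}$. At each step, pick an innermost superfluous intersection circle $\alpha$, the associated superfluous disk $D\subset\sigma$, and the corresponding disk $D'\subset S$ on the sphere $S\in\Sigma\cup\Sigma'$ containing $\alpha$; then $D\cup D'$ bounds a ball $B$ in $W_n$, and the surgery replaces $D'$ by a parallel push-off of $D$.

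The heart of the argument is the claim that each such surgery preserves disjointness of the system $\Sigma\cup\Sigma'$. Let $S''$ be any sphere in $\Sigma\cup\Sigma'$ other than $S$. Since $S''$ is disjoint from $S$, it is disjoint from $D'$. If $S''$ met the interior of $B$ but were not contained in it, it would have to cross $\partial B = D\cup D'$ in the interior of $D$, producing an intersection circle of $\Sigma\cup\Sigma'$ with $\sigma$ inside $D$ and contradicting innermostness. On the other hand, $S''$ cannot be entirely contained in $B$, because $B$ is a ball in $W_n$ while $S''$ is essential in $W_n$. Hence $S''$ is disjoint from $B$ and from a small neighborhood of it, so the surgery leaves $S''$ untouched and the push-off of $D$ is disjoint from $S''$. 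Iterating until no superfluous intersections remain, and invoking Lemma~\ref{lem:nesting-superfluous-disks} to identify the surgeries performed on spheres of $\Sigma$ (respectively $\Sigma'$) with the ones used in the definition of $\mathcal{R}(\Sigma)$ (respectively $\mathcal{R}(\Sigma')$), one obtains a disjoint realization of $\mathcal{R}(\Sigma)\cup\mathcal{R}(\Sigma')$.

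The main obstacle I expect is the bookkeeping: when running the procedure on $\Sigma\cup\Sigma'$ the notion of ``innermost'' is a priori different from what one sees running it on $\Sigma$ or $\Sigma'$ separately, and the re-isotopy to minimal position after each surgery must be handled so that the combined output really does agree with $\mathcal{R}(\Sigma)$ and $\mathcal{R}(\Sigma')$ in the end. Once this compatibility is established, adjacent vertices are mapped to vertices whose representatives can be realized disjointly, so $\mathcal{R}$ is $1$-Lipschitz.
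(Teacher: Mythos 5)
Your approach is essentially the same as the paper's: both reduce $1$-Lipschitzness to the per-sphere independence asserted in Lemma~\ref{lem:nesting-superfluous-disks}, namely that minimal position and the superfluous surgery disks are determined by the individual sphere carrying the intersection circle, not by the ambient system. The paper states this rather tersely and leaves the geometric verification implicit; your ball argument (that the ball $B$ bounded by $D\cup D'$ cannot contain nor be crossed by any other sphere of the system, by innermostness plus essentiality) is exactly the right supplementary detail and is worth spelling out. The one thing you flag as an ``obstacle'' is in fact already resolved by the lemma you cite: since the \emph{set} of superfluous surgery disks for a fixed sphere $S$ is unchanged by passing from $\Sigma$ to $\Sigma\cup\Sigma'$, and surgering an innermost circle of one sphere does not alter the superfluous circles on a disjoint sphere, the output of the procedure on each sphere is independent of both the ambient system and the order in which innermost circles are processed. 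So the ``innermost for $\Sigma\cup\Sigma'$'' ordering differs only in bookkeeping, not in outcome, and the combined run does recover $\mathcal{R}(\Sigma)\cup\mathcal{R}(\Sigma')$ sphere by sphere. With that observation made explicit, your proof is complete and matches the paper's.
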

\begin{proof}
  By Lemma~\ref{lem:inclusion-exclusion}, it suffices to show that if
  we add or remove a sphere from $\Sigma$, 
  the result of the surgery procedure is disjoint from $\mathcal{C}(\Sigma)$.

  Hence, let $\Sigma \subset \Sigma'$ be given.
  We claim that the spheres in $\mathcal{C}(\Sigma')$ obtained by the
  procedure applied to spheres in $\Sigma$ are exactly the spheres in
  $\mathcal{C}(\Sigma)$. This claim obviously implies the lemma.
  
  The claim now follows since both the normal position
  and the choice of superfluous surgery disks do not 
  depend on the isotopy classes of the full sphere system $\Sigma$ or
  $\Sigma'$, but rather individually on the spheres contained in the
  systems.
\end{proof}

\begin{lemma}\label{lem:disjoint-gets-disjoint}
  If $\Sigma$ can be homotoped to be disjoint from $\sigma$ as a
  sphere system in $W_n$, then $\mathcal{C}(\Sigma)$ is disjoint from
  $\sigma^\pm$. 
\end{lemma}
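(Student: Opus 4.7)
The plan is to proceed by induction on the number of intersection circles $|\Sigma\cap\sigma|$ in $W_{n,1}$-minimal position. The base case $|\Sigma\cap\sigma|=0$ is trivial. For the inductive step, the key claim is that if $\Sigma\cap\sigma\neq\emptyset$ and $\Sigma$ can be $W_n$-homotoped to be disjoint from $\hat{\sigma}$, then there is an innermost superfluous intersection circle. Granted the claim, a single step of $\mathcal{R}$ surgers this circle by pushing a subdisk of $\Sigma$ through the ball bounded by $D\cup D'$ in $W_n$; in $W_n$ this is an ambient isotopy, so the surgered system is still $W_n$-homotopic to a system disjoint from $\hat{\sigma}$. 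After re-minimizing in $W_{n,1}$, the inductive hypothesis applies to a system with strictly fewer intersections, and iterating shows $\mathcal{R}(\Sigma)$ is disjoint from $\sigma$.

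To establish the claim, I would pick an intersection circle $\alpha$ bounding an innermost disk $D\subset\sigma$ (so the interior of $D$ meets no other component of $\Sigma\cap\sigma$). Write $s\in\Sigma$ for the sphere containing $\alpha$ and $D'_1,D'_2\subset s$ for the two disks with boundary $\alpha$. The target is to show that at least one of $T_i=D\cup D'_i$ is inessential in $W_n$, which is exactly the superfluousness condition for $\alpha$; note that an innermost-on-$\sigma$ choice automatically produces an innermost superfluous disk, since by the discussion preceding Lemma~\ref{lem:nesting-superfluous-disks} any intersection strictly inside $D$ would itself be superfluous with a strictly smaller surgery disk.

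The inessentiality of one of the $T_i$ comes from the standard innermost-disk reduction that underpins minimal position for sphere systems (compare \cite{Ha95,HV96}). By hypothesis, the geometric intersection of $[\Sigma]$ and $[\sigma]$ in $W_n$ is zero, so $(\Sigma,\sigma)$ is not in $W_n$-minimal position. The standard surgery lemma then guarantees that some innermost disk $D\subset\sigma$ admits a $W_n$-isotopy reducing the intersection count, realized by pushing one of $D'_1,D'_2$ across a ball in $W_n$. The existence of such a ball is exactly the statement that one of $T_1,T_2$ is inessential in $W_n$, so $\alpha$ is superfluous. The step I expect to be most delicate is ensuring that a reducing innermost disk can be arranged to lie on $\sigma$ itself (rather than on some sphere of $\Sigma$); once this symmetry point is settled via the usual Hatcher-type innermost-disk exchange argument, the claim and hence the lemma follow.
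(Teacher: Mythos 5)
Your proof proposal takes essentially the same route as the paper. The paper's own proof is just two sentences: it asserts that a sphere system in $W_{n,1}$-minimal position which is not in $W_n$-minimal position must have a superfluous intersection circle, and then notes that $\mathcal{R}$ removes all such circles. You isolate exactly this claim as the ``key claim,'' recast the argument as an induction on the number of intersection circles (which is just the termination argument for $\mathcal{R}$), and then sketch a proof of the claim via innermost disks and the standard surgery/normal-position machinery of Hatcher and Hatcher--Vogtmann. The paper leaves that claim unjustified, so you are filling in something the authors treat as folklore; both arguments have the same logical skeleton and rest on the same classical fact.

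One remark on the ``delicate point'' you flag at the end. You worry about ensuring the reducing innermost disk lies on $\sigma$ rather than on a sphere of $\Sigma$, but this is less of an issue than you suggest: the definition of a superfluous circle only requires the existence of \emph{some} pair of disks $D\subset\sigma$, $D'\subset\Sigma$ bounded by the circle whose union is inessential in $W_n$; it does not ask $D$ to be innermost on $\sigma$. So if the standard reduction hands you an innermost disk $D'\subset\Sigma$ together with its capping disk $D\subset\sigma$ bounding a ball in $W_n$, that circle is already superfluous by definition, and no symmetry exchange is needed. The genuine content you should be careful to cite is rather that failure of $W_n$-minimality implies \emph{some} intersection circle admits such an inessential cap — the converse of the easy direction of the innermost-disk lemma — which is the Hatcher/Laudenbach normal-form fact you already gesture at.
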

\begin{proof}
  Put $\Sigma$ and $\sigma^\pm$ in minimal intersection in
  $W_{n,1}$. If these representatives are not in minimal position as
  sphere systems in $W_n$, there is a superfluous intersection
  circle. Therefore, there is also a superfluous surgery disk (by
  considering the innermost superfluous intersection circle).
  Since the procedure defining $\mathcal{C}$
  successively removes all superfluous surgery disks and does not
  change the isotopy class as a sphere system in $W_n$, the result of
  applying $\mathcal{C}$ is disjoint from $\sigma^\pm$. 
\end{proof}

\begin{proof}[Proof of Theorem~\ref{thm:sphere-retract}]
  The desired coarse Lipschitz retraction is constructed by composing several
  maps. Recall that $s:\mathcal{S}(W_n) \to \mathcal{S}(W_{n,1})$ is
  the quasi-isometric section given by Mosher's theorem. In
  particular since $s$ is a section of the basepoint-forgetting-map it
  follows that for every sphere 
  system $\Sigma$ representing a vertex of $\mathcal{S}(W_n)$, the
  sphere system $s(\Sigma)$ is homotopic to $\Sigma$ in $W_n$.

  Let now
  $\sigma$ be an essential sphere in $W_n$, and let as above
  $\sigma^\pm$ be a basepoint sphere pair in $W_{n,1}$ which is homotopic to
  $\sigma$ in $W_n$, and let
  $\mathcal{C}:\mathcal{S}(W_{n,1})\to\mathcal{S}(W_{n,1})$ be the
  corresponding cleanup map defined above. Next, we need the
  Lipschitz retraction $\pi_{\sigma^\pm}$ defined in
  Theorem~\ref{thm:basept-sphere-retract}, and finally the forgetful
  map $f:\mathcal{S}(W_{n,1}) \to \mathcal{S}(W_n)$.

  The desired retraction is now defined as $r = f\circ \pi_{\sigma^\pm}\circ
  \mathcal{C}\circ s$. This map is coarsely Lipschitz, since it is the
  composition of several coarse Lipschitz maps. 
  We claim that its image lies in $\mathcal{S}(W_n,\sigma)$
  and that it restricts to the identity on that set.

  By construction, $\pi_{\sigma^\pm}$ has image in
  $\mathcal{S}(W_{n,1},\sigma^\pm)$. In particular, the image of
  $\pi_{\sigma^\pm}$ consists of sphere systems (in $W_{n,1}$) which
  are disjoint from $\sigma^\pm$. Thus, the image of $r$ consists of
  sphere systems which are disjoint from $\sigma$. Thus shows the
  first claim.

  Now let $\Sigma'$ be any sphere system in $W_n$ which is disjoint
  from $\sigma$. The sphere system $s(\Sigma')$ is homotopic to
  $\Sigma'$ in $W_{n}$ and thus by
  Lemma~\ref{lem:disjoint-gets-disjoint} the image
  $\mathcal{C}\left(s(\Sigma')\right)$ under the clainup map is
  disjoint from $\sigma^\pm$, and still homotopic to $\Sigma'$ in
  $W_n$. Thus, $\pi_{\sigma^\pm}$ fixes this sphere system. Consequently,
  $r(\Sigma') = \Sigma'$, showing the second claim.   
  %The image of
  %$\pi_{\sigma^\pm}$ consists 
  %of sphere systems which are compatible with $\sigma$. Therefore, the image
  %$r(\Sigma)$ of any sphere system has intersection $0$ with
  %$\sigma$. If $\Sigma'$ has intersection $0$ with $\sigma$ in $W_n$, then by
  %Lemma~\ref{lem:disjoint-gets-disjoint} $\mathcal{C}\left(s(\Sigma')\right)$ is
  %disjoint from $\sigma$, and homotopic to $\Sigma'$ in $W_n$. Thus,
  %$\pi_{\sigma^\pm}$ fixes this sphere system. Consequently,
  %$r(\Sigma') = \Sigma'$.   
\end{proof}

\section{Mapping class groups in $\mathrm{Out}(F_n)$}
\label{sec:bundles}

In this section we study the geometry of surface mapping class groups
inside $\mathrm{Out}(F_n)$. Let $S_g^1$ be a surface of genus $g$ with one
boundary component, and let $S_{g,1}$ be the surface obtained by
collapsing the boundary component of $S_g^1$ to a marked point. We
view the marked point as a puncture of the surface, so that the
fundamental group of $S_{g,1}$ is the free group $F_{2g}$ on $2g$ generators.

\begin{remark}
   We believe that our methods can also be used with minor
   modifications to treat the case of more than one boundary
   component, however we did not verify the details. 
\end{remark}
\medskip
A simple closed curve on $S_{g,1}$ which bounds a disk containing the
marked point defines a distinguished conjugacy class in
$\pi_1(S_{g,1})$ called the \emph{cusp class}.

The following analog of the Dehn-Nielsen-Baer theorem for punctured
surfaces is well-known (see e.g. Theorem~8.8 of \cite{FM11}).
\begin{theorem}\label{thm:dnb}
  The homomorphism
  $$\iota:\Map(S_{g,1}) \to \mathrm{Out}(F_{2g})$$
  induced by the action on the fundamental group of $S_{g,1}$ is
  injective. Its image consists of those outer automorphisms which
  preserve the cusp class.
\end{theorem}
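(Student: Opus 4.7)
The plan is to reduce Theorem~\ref{thm:dnb} to the closed-surface Dehn-Nielsen-Baer theorem via the Birman exact sequence
$$1 \to \pi_1(S_g, p) \to \Map(S_{g,1}) \to \Map(S_g) \to 1,$$
where $S_g$ is obtained from $S_{g,1}$ by filling in the puncture $p$ and the first map is point-pushing.

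For injectivity of $\iota$, let $\phi \in \ker \iota$, and pick a basepoint $q \in S_{g,1}$ and a representative $f$ of $\phi$ with $f(q) = q$. Since $\iota(\phi)$ is trivial, $f_* \colon F_{2g} \to F_{2g}$ is inner; after replacing $f$ by an isotopic homeomorphism that drags $q$ along a suitable loop, we may arrange $f_* = \id$ strictly. As $S_{g,1}$ is aspherical, $f$ is then homotopic to the identity, and the Baer-Epstein theorem for orientation-preserving self-homeomorphisms of surfaces with $\chi < 0$ promotes this homotopy to an isotopy; hence $\phi = \id$.

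For the image characterization, the forward direction is immediate: an orientation-preserving self-homeomorphism of $S_{g,1}$ extends to $S_g$ fixing $p$ and sends a positively oriented peripheral loop at $p$ to another such loop, so the cusp class is preserved. Conversely, let $\psi \in \Out(F_{2g})$ preserve the cusp class $[c]$. The normal closure of $c$ is the kernel of the surjection $F_{2g} \twoheadrightarrow \pi_1(S_g)$ and is preserved by $\psi$, so $\psi$ descends to an outer automorphism $\bar\psi$ of $\pi_1(S_g)$. The classical Dehn-Nielsen-Baer theorem yields an orientation-preserving $\bar f \in \Map(S_g)$ realizing $\bar\psi$; isotoping $\bar f$ to fix $p$ gives a lift $f \in \Map(S_{g,1})$, and by construction $\iota(f)$ and $\psi$ coincide in $\Out(\pi_1(S_g))$. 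By exactness, they differ by an element in the image of the point-pushing homomorphism $\pi_1(S_g, p) \to \Out(F_{2g})$, and composing $f$ with a suitable point-push corrects this to realize $\psi$ exactly.

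The main obstacle is justifying the last step: identifying the image of point-pushing inside $\Out(F_{2g})$ and verifying that it exactly accounts for the ambiguity between $\psi$ and any chosen lift $\iota(f)$, subject to preservation of the cusp class. A cleaner alternative is to prove the analogous statement first on the bordered surface $S_g^1$, where the target is $\Aut(F_{2g})$, basepoints are naturally fixed on the boundary, and the image admits an explicit description as the stabilizer of the boundary element; one can then descend to $S_{g,1}$ by quotienting out the Dehn twist about $\partial S_g^1$ on the mapping class side and the inner automorphisms on the automorphism side, bypassing the explicit point-pushing computation.
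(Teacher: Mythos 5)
The paper does not actually prove Theorem~\ref{thm:dnb}: it is cited as well-known, referring to Theorem~8.8 of \cite{FM11}, so there is no internal argument to compare against. Your injectivity argument is sound in outline, provided one uses a version of the Baer--Epstein theorem adapted to punctured surfaces so that the isotopy to the identity can be taken to fix the marked point throughout.

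The surjectivity argument has a genuine gap at precisely the step you flag. Having applied the closed Dehn--Nielsen--Baer theorem and produced $f \in \Map(S_{g,1})$ with $\iota(f)$ and $\psi$ descending to the same class in $\Out(\pi_1(S_g))$, you assert that ``by exactness'' the discrepancy $\iota(f)^{-1}\psi$ lies in the image of point-pushing. But the Birman sequence is an exact sequence of mapping class groups: its exactness describes the kernel of $\Map(S_{g,1}) \to \Map(S_g)$ and says nothing about which elements of $\Out(F_{2g})$ descend to the identity in $\Out(\pi_1(S_g))$. Invoking exactness here tacitly assumes $\psi \in \iota(\Map(S_{g,1}))$, which is exactly what must be shown. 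The missing content is an independent argument that every outer automorphism of $F_{2g}$ preserving the cusp class and inducing the trivial element of $\Out(\pi_1(S_g))$ is realized by a point-push; this is the substantive part of surjectivity and is not a formal consequence of the two exact sequences. Your proposed alternative --- first identifying $\Map(S_g^1)$ with the stabilizer of the boundary word $c$ in $\Aut(F_{2g})$ (where basepoint issues vanish) and then quotienting out the boundary Dehn twist on one side and inner automorphisms on the other --- is the right idea and is essentially the standard textbook route, but you do not carry it out. A smaller omission: to conclude that $\bar f$ may be chosen orientation-preserving you must show that preservation of the oriented cusp class, as opposed to its unoriented conjugacy class, forces $\bar\psi$ into the index-two orientation-preserving subgroup of $\Out(\pi_1(S_g))$.
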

In the sequel we identify $\Map(S_{g,1})$ with its image under
$\iota$. The goal of this section is to prove
\begin{theorem}\label{thm:map-in-out}
  $\Map(S_{g,1})$ is a Lipschitz retract of
  $\Out(F_{2g})$. In particular, it is undistorted (i.e. the inclusion
  map is a quasi-isometric embedding).
\end{theorem}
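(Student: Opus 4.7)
The proof follows the architecture of Section~\ref{sec:spheres}, with the distinguished sphere $\sigma$ replaced by an embedded copy $\Sigma\subset W_{2g}$ of the compact surface $S_g^1$. The first step is to realize $\Map(S_{g,1})$ topologically: fix a proper embedding $\Sigma\hookrightarrow W_{2g}$ of a genus-$g$ surface with one boundary component whose inclusion induces the standard identification $\pi_1(\Sigma)=F_{2g}=\pi_1(W_{2g})$ and whose boundary represents the cusp class. By Theorem~\ref{thm:dnb}, the stabilizer of the isotopy class of $\Sigma$ in $\Out(F_{2g})$ is precisely $\Map(S_{g,1})$. Let $\mathcal{S}(W_{2g},\Sigma)$ be the subgraph of $\mathcal{S}(W_{2g})$ spanned by simple sphere systems that can be isotoped off $\Sigma$. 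A direct adaptation of the surgery argument of \cite{HV96} will show this subgraph is connected, and the stabilizer of $\Sigma$ acts on it cocompactly; the \v{S}varc--Milnor lemma then identifies $\mathcal{S}(W_{2g},\Sigma)$ equivariantly and up to quasi-isometry with $\Map(S_{g,1})$, reducing the theorem to constructing a coarse Lipschitz retraction $\pi_\Sigma:\mathcal{S}(W_{2g})\to\mathcal{S}(W_{2g},\Sigma)$.

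The core construction is a subsurface projection from spheres to arcs on $S_{g,1}$ composed with a doubling back to spheres. Given a simple sphere system $T$ intersecting $\Sigma$ minimally, the trace $T\cap\Sigma$ is a collection of arcs and simple closed curves on $\Sigma$. Any circle of intersection bounds a disk on the corresponding sphere of $T$, and pushing across this disk removes the circle by a direct analog of Lemma~\ref{lem:pushoff}. The remaining collection of essential arcs $\{\alpha_i\}\subset\Sigma$ is canonically determined by the isotopy class of $T$. Each $\alpha_i$ doubles across $\Sigma$ to an embedded essential sphere in $W_{2g}$ disjoint from $\Sigma$; this is the arc-to-sphere embedding underlying Proposition~\ref{prop:arcgraphs}. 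Together with the sphere pieces of $T$ already lying off $\Sigma$, these doubled spheres assemble into a sphere system $\pi_\Sigma(T)\in\mathcal{S}(W_{2g},\Sigma)$ via the capping-off procedure of Section~\ref{sec:basepoint-stab}.

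It remains to verify the retraction property and the Lipschitz bound. If $T$ is already disjoint from $\Sigma$, no surgery is performed, so $\pi_\Sigma(T)=T$. For $T\subset T'$, the innermost surgery disks on $\Sigma$ chosen for $T$ are a canonical subcollection of those chosen for $T'$, so $\pi_\Sigma(T)\subset\pi_\Sigma(T')$ and in particular the two projections are disjoint, yielding the $1$-Lipschitz estimate. The main obstacle I expect is the analog of Lemma~\ref{lem:result-simple-spheres}: ensuring $\pi_\Sigma(T)$ is again \emph{simple}, i.e.\ that the doubled arcs together with the surviving sphere pieces cut $W_{2g}$ into balls. This reduces to showing that $T\cap\Sigma$ fills $\Sigma$ whenever $T$ is simple, which should follow by adapting Lemma~\ref{lem:pushoff} to the surface setting: since $\Sigma$ carries $\pi_1(W_{2g})$, a nonfilling arc system on $\Sigma$ would leave an essential loop in $W_{2g}$ disjoint from $T$, contradicting simplicity of $T$.
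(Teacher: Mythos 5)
Your proposal has a fundamental obstruction at the very first step. You define $\mathcal{S}(W_{2g},\Sigma)$ to be the subgraph of sphere systems ``that can be isotoped off $\Sigma$'', but this subgraph is empty: since the inclusion of the surface $\Sigma\cong S_g^1$ induces an isomorphism $\pi_1(\Sigma)\to\pi_1(W_{2g})$, no essential sphere can be made disjoint from $\Sigma$. (If $\sigma$ were an essential sphere disjoint from the connected surface $\Sigma$, then $\pi_1(\Sigma)\to\pi_1(W_{2g})$ would factor through the fundamental group of a complementary piece of $\sigma$, i.e.\ through a proper free factor, which is impossible.) Relatedly, the spheres obtained by doubling essential arcs $\alpha_i\subset\Sigma$ are \emph{not} disjoint from $\Sigma$: the double of $\alpha_i$ meets $\Sigma$ precisely in $\alpha_i$. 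So the entire ``\v{S}varc--Milnor on $\mathcal{S}(W_{2g},\Sigma)$'' framing collapses, and one cannot import the Section~\ref{sec:spheres} architecture of a Lipschitz retraction onto a subgraph of the sphere graph. The paper instead retracts $\mathcal{S}(W_{2g})$ onto a different model for $\Map(S_{g,1})$, namely the graph of binding arc systems on $S_g^1$, and never produces sphere systems disjoint from the surface.

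Even granting a repair (say, replacing ``isotoped off $\Sigma$'' by ``meets $\Sigma$ in a binding arc system''), your sentence asserting that the arc collection $\{\alpha_i\}$ is ``canonically determined by the isotopy class of $T$'' conceals the actual content of the theorem. The paper explicitly warns that minimal position alone does \emph{not} determine the induced arc system, and the bulk of Section~\ref{sec:bundles} (decorated ribbon graphs, the passage from ribbon to tight minimal position in Lemma~\ref{lem:existence-tight-pos}, and the hexagon-disk argument in Lemma~\ref{lem:uniqueness-tight-position}) is devoted precisely to making the induced binding arc system well-defined. Likewise, the simple observation that circles of intersection can be pushed off via Lemma~\ref{lem:pushoff} is not enough; one must control the two-dimensional geometry of the embedded surface (ribbon position, Whitehead moves, the auxiliary arc system $A_0$), and this is where the real work lies. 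Your sketch identifies the right-shaped object to build but skips over the two genuinely hard points: that the target model is nonempty and correctly chosen, and that the projection is well-defined.
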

To prove this theorem, we explicitly define a Lipschitz projection map from
$\Out(F_{2g})$ onto the image of $\iota$. This will be
done by a topological procedure in the $3$--manifold $W_{2g}$.
Intuitively speaking, given a simply sphere system $\Sigma$, we will
intersect $\Sigma$ with a nicely embedded copy $C$ of $S_g^1$. The
result $C\cap S_g^1$ is a system of arcs which (coarsely) determines
an element of the surface mapping class group.

There are two main difficulties in this approach. 
First, we need to ensure that $C\cap S_g^1$ is (coarsely) uniquely
defined by the isotopy class of $\Sigma$.
This will be done by defining a normal form for the surface $S_g^1$
with respect to a sphere system. 
Second, we have to show that for a sphere system corresponding to
an element $f$ of the subgroup $\Map(S_{g,1})$, this intersection is
uniformly close to an arc system determined by $f$. 

We now begin with the details of the proof.
\subsection{Geometric models}
\label{sec:geom-models}
Here, we describe the geometric model for the surface mapping class
group $\Map(S_{g,1})$ as a subgroup of $\Out(F_{2g})$ that we will use to prove
Theorem~\ref{thm:map-in-out}. 

\medskip
We begin with the mapping class group of
$S_{g,1}$. A \emph{binding loop system for $S_{g,1}$} is 
a collection of pairwise non-homotopic, essential embedded loops
$\{a_1,\ldots,a_m\}$ based at the marked 
point of $S_{g,1}$ which intersect only at the marked point
and which decompose $S_{g,1}$ into a disjoint union of disks.

Let $\mathcal{BL}(S_{g,1})$ be the
graph whose vertex set is the set of isotopy classes of binding loop
systems. Here, isotopies are required to fix the marked point.
Two such systems are connected by an edge if they intersect
in at most $K$ points different from the base point.
As the mapping class group of $S_{g,1}$ acts with finite quotient on
the set of isotopy classes of binding loop systems (this follows easily
from the change of coordinates principle described in \cite[Chapter~1.3]{FM11}),
we can choose the number $K>0$ such that the following lemma is true.
\begin{lemma}
  The graph $\mathcal{BL}(S_{g,1})$ is connected. The mapping class
  group of $S_{g,1}$ acts on $\mathcal{BL}(S_{g,1})$ with finite
  quotient and finite point stabilizers. 
\end{lemma}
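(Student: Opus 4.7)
The plan is to establish the three assertions of the lemma in turn: connectedness of $\mathcal{BL}(S_{g,1})$, cofiniteness of the $\Map(S_{g,1})$--action, and finiteness of point stabilizers. The choice of $K$ will be made during the connectedness argument and will be large enough to accommodate all the estimates that arise.

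For cofiniteness of the action, I would first use an Euler characteristic count to observe that a binding loop system with $n$ loops decomposes $S_{g,1}$ into $n - 2g + 1$ open disks (treating the marked point as the unique $0$-cell). A binding loop system is then determined up to homeomorphism by the combinatorial ribbon graph structure on its one-vertex graph, together with the gluing pattern of the complementary polygons. Since a given binding loop system supports a decomposition of uniformly bounded combinatorial complexity (and we may, if necessary, restrict attention to minimal systems, where $n = 2g$ and $S_{g,1}$ is cut into a single $4g$--gon), there are only finitely many such combinatorial types. Two binding loop systems of the same combinatorial type lie in the same $\Map(S_{g,1})$--orbit by the change of coordinates principle, which yields the finite quotient.

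For finiteness of point stabilizers, let $B \in \mathcal{BL}(S_{g,1})$. Any mapping class fixing $B$ up to isotopy preserves the induced cell decomposition of $S_{g,1}$, and hence induces a permutation of the finite set of loops and complementary disks of $B$. This produces a homomorphism from the stabilizer into a finite permutation group. An element of the kernel fixes each loop and each disk setwise; restricting to each complementary disk (with a finite set of marked points on its boundary coming from the traces of the loops) one obtains an element of the mapping class group of a marked disk, which is finite. Hence the stabilizer itself is finite.

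The main obstacle is connectedness, and this is where the flexibility in choosing $K$ matters. My plan is: fix a finite generating set $\{\varphi_1,\ldots,\varphi_m\}$ of $\Map(S_{g,1})$, together with finite representatives $B_1,\ldots,B_s$ of the (finitely many) orbits established above. Set
\[
K := \max\bigl\{\,|B_i \cap \varphi_j^{\pm 1}(B_i)|\,,\ |B_i \cap B_{i'}|\,\bigr\},
\]
where the maximum runs over all $i,i',j$ with $B_i$ and $B_{i'}$ chosen so as to intersect minimally. With this choice, for any binding loop system $B$ in the orbit of some $B_i$, writing $B = \psi(B_i)$ with $\psi = \varphi_{j_1}^{\epsilon_1} \cdots \varphi_{j_\ell}^{\epsilon_\ell}$, the successive images $B_i,\, \varphi_{j_1}^{\epsilon_1}(B_i),\, \varphi_{j_1}^{\epsilon_1} \varphi_{j_2}^{\epsilon_2}(B_i), \ldots, B$ give a path in $\mathcal{BL}(S_{g,1})$ because adjacent systems are translates by a single generator and so have $\leq K$ intersection points. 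Adjacency between representatives of different orbits is built into the choice of $K$. Concatenating, any two binding loop systems are joined by a path, proving connectedness.
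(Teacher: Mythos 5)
The paper does not actually supply a proof of this lemma; it is stated as a standard fact, with the sentence immediately preceding it indicating the intended argument: the $\Map(S_{g,1})$--action on isotopy classes of binding loop systems is cofinite, and $K$ is chosen large enough relative to that. Your proof is correct and fleshes out precisely this outline in the standard way — an Euler-characteristic/combinatorial-type count for cofiniteness, the Alexander method for finite stabilizers, and the Svarc--Milnor-style word-length path (with $K$ an upper bound for intersection numbers of orbit representatives with their images under a finite generating set) for connectedness — so it matches the paper's intent, just in more detail. The only point worth tightening is the implicit assumption that the loops in a binding loop system are essential and pairwise non-isotopic, which is what keeps the number $n$ of loops bounded by $6g-3$ and makes the "uniformly bounded combinatorial complexity" claim true; this convention is standard and is clearly what the paper intends, but your phrase "we may, if necessary, restrict attention to minimal systems" should be replaced by that observation, since the graph's vertex set is not restricted to minimal systems.
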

Instead of working with  binding loop systems of $S_{g,1}$ 
directly, we will frequently use \emph{binding arc systems} of $S_g^1$. By
this we mean a collection $A$ of disjointly embedded arcs $\{a_1,\ldots,a_m\}$ 
connecting the boundary component of $S_g^1$ to itself which
decompose $S_g^1$ into simply connected regions.
We will consider such binding arc systems up to isotopy of properly
embedded arcs.
A binding arc system for $S_g^1$ defines a binding loop system for
$S_{g,1}$ by collapsing the boundary component of $S_g^1$ to the
marked point.
Note that if $A_1, A_2$ are two disjoint binding arc systems for
$S_g^1$ then the corresponding binding loop systems for $S_{g,1}$
intersect only at the base point. Therefore, these binding loop
systems are adjacent in $\mathcal{BL}(S_{g,1})$.
The Dehn twist about the boundary component of $S_g^1$ acts trivially
on the isotopy class of any arc system. Thus the action of the
mapping class group of $S_g^1$ on binding arc systems factors through
an action of $\Map(S_{g,1})$.
By the \v{S}varc-Milnor lemma, $\Map(S_{g,1})$ is quasi-isometric to the
graph of binding arc systems via the orbit map.

For the rest of this section, we put $n=2g$.
As before, we use the graph of simple sphere system $\mathcal{S}(W_n)$
as a geometric model for $\mathrm{Out}(F_{2g})$.

\medskip
The inclusion map $\iota:\Map(S_{g,1})\to\Out(F_{2g})$ has a simple
topological description in terms of these models. 
Let $U_{2g} = S_g^1\times[0,1]$ be the trivial oriented interval bundle
over $S_g^1$. $U_{2g}$ is a handlebody of genus $2g$, and we identify
$W_{2g}$ with the three-manifold obtained by doubling $U_{2g}$ along
its boundary. 

Let $a$ be an essential arc on $S_g^1$. Then $a\times[0,1]$ is an
essential disk in $U_{2g}$ which doubles to an essential sphere in $W_n$. A
binding arc system for $S_g^1$ defines a simple sphere system in this
way. 
Similarly, any diffeomorphism $f$ of $S_g^1$ extends 
to a diffeomorphism $I(f)$ of $W_n$ by first extending
$f$ to $U_{2g}$ (by taking the product with the identity on $[0,1]$)
and then doubling to a diffeomorphism of $W_n$. If $f$ and $f'$ are
homotopic as diffeomorphisms of $S_g^1$, then the same is true for
$I(f)$ and $I(f')$. Indeed, the map
$I:\mathrm{Diffeo}(S_g^1)\to\mathrm{Diffeo}(W_n)$ induces the map 
$\iota:\Map(S_{g,1})\to\Out(F_n)$.

\subsection{Minimal position for arcs and curves}
\label{sec:reduc-sphere-syst}
Let $\alpha$ be a closed curve in $W_n$ and let $\Sigma$ be a simple
sphere system. Without loss of generality we always assume that all
such closed curves are embedded, and all intersections with spheres
are transverse. 

We define a minimal position of $\alpha$ with respect to $\Sigma$
as follows. 
Let $W_\Sigma$ be the complement of $\Sigma$ in the sense described
for a single sphere in Section~\ref{sec:spheres} -- that is,
$W_\Sigma$ is a compact (possibly disconnected) three-manifold whose boundary
consists of $2k$ boundary spheres
$\sigma_1^+,\sigma_1^-,\ldots,\sigma_k^+,\sigma_k^-$. The boundary
spheres $\sigma_i^+$ and $\sigma_i^-$ correspond to the two sides of
a sphere $\sigma_i\in \Sigma$.  
The construction of $W_\Sigma$ is analogous to the complements of
single spheres we denoted by $N$ in Section~\ref{sec:spheres}.

If $\alpha$ is not disjoint from $\Sigma$ then
the intersection of $\alpha$ with $W_\Sigma$ is a disjoint union of
arcs connecting the boundary components of $W_\Sigma$. We call these
arcs the \emph{$\Sigma$--arcs of $\alpha$}. An orientation of $\alpha$
induces a cyclic order on the $\Sigma$--arcs of $\alpha$.

We say that \emph{$\alpha$ intersects $\Sigma$ minimally} if no
$\Sigma$-arc of $\alpha$ connects a boundary component of $W_\Sigma$
to itself. Note that this is equivalent to the following statement:
a lift of $\alpha$ into the universal cover of $W_n$ intersects each
lift of each sphere in $\Sigma$ in at most one point.

\begin{remark}
  As the name suggests, $\alpha$ intersects $\Sigma$ minimally if and
  only if the number of intersection points between $\alpha$ and
  $\Sigma$ is minimized among all homotopic representatives of
  $\alpha$ and $\Sigma$. This point of view is not used later in this
  article, and thus we do not give a proof.
\end{remark}
To study minimal position of curves (and later arcs) it is convenient to 
work in the universal cover $\widetilde{W}_n$ of $W_n$.
Let $\widetilde{\Sigma}$ be the full
preimage of $\Sigma$. The dual graph $T_\Sigma$ to $\widetilde{\Sigma}$ is by
definition the graph which has a vertex for each complementary
component of $\widetilde{\Sigma}$, and an edge for each connected
component of $\widetilde{\Sigma}$. It is easy to see that $T_\Sigma$
is in fact a tree. Furthermore, we choose an equivariant retraction $r$
of the manifold $\widetilde{W}_n$ onto $T_\Sigma$. To see that this exists,
consider embedded product neighborhoods $N(\sigma) = \sigma\times(0,1)$ in
$\widetilde{W}_n$ of each sphere $\sigma \in
\widetilde{\Sigma}$. Define a retraction $r$ by mapping each complementary
region of $\cup N(\sigma)$ to the vertex defined by the
corresponding complementary region of $\widetilde{\Sigma}$, and mapping
each region $N(\sigma)$ to the edge in $T_\Sigma$ defined by
$\sigma$ (linearly parametrized by the product coordinate).

If $\alpha$ is any curve or arc, then one can ensure by applying a homotopy 
that every lift $\widetilde{\alpha}$ has the following property: 
each connected component of $\widetilde{\alpha} \cap N(\sigma)$ has the 
form $\{p\}\times(0,1)$ for some $p\in\sigma$. If $\alpha$ was in minimal 
position, then this homotopy furthermore does not change which complementary
components of $\widetilde{\Sigma}$ the lift $\widetilde{\alpha}$ intersects.

\smallskip
We adopt the convention that when working with the tree $T_\Sigma$ we
will always assume that curves have this form. This assumption ensures
that $r(\widetilde{\alpha})$ is a simplicial path in $T_\Sigma$.

\begin{lemma}\label{lem:existence-uniqueness-minimal-intersection-curves}
  \begin{enumerate}[i)]
  \item Every closed curve $\alpha$ in $W_n$ can be modified by a
    homotopy to intersect $\Sigma$ minimally.
  \item Let $\alpha$ and $\alpha'$ be two closed curves which
    are freely homotopic and which intersect $\Sigma$ minimally.  Then
    there is a bijection 
    $f$ between the $\Sigma$--arcs of $\alpha$ and the $\Sigma$--arcs
    of $\alpha'$ such that for each
    $\Sigma$--arc $a$ of $\alpha$ the arc $f(a)$ is homotopic to $a$
    through $\Sigma$-arcs. 

    If orientations of $\alpha$ and
    $\alpha'$ are chosen appropriately, $f$ may be chosen to respect
    the cyclic orders on the $\Sigma$--arcs.
  \end{enumerate}
\end{lemma}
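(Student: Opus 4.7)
The plan is to lift the problem to the universal cover $\widetilde{W}$ of $W=W_n$. Because $\Sigma$ is a \emph{simple} sphere system, each component of $\widetilde{W}\setminus\widetilde{\Sigma}$ is an open $3$-ball. The dual graph $T$---with vertices the complementary balls and edges the lifts of spheres in $\Sigma$---is then a tree, namely the Bass--Serre tree of the free product decomposition of $F_n=\pi_1(W)$ dual to $\Sigma$, on which $F_n$ acts with trivial stabilizers; in particular, every nontrivial $g\in F_n$ acts hyperbolically on $T$ with a unique axis $L_g$. In this dictionary, a closed curve $\alpha$ transverse to $\Sigma$ lifts to a path $\widetilde{\alpha}$ in $\widetilde{W}$ that projects to an edge-path in $T$, and a $\Sigma$-arc of $\alpha$ joins a boundary component of $W_\Sigma$ to itself precisely when that edge-path backtracks at the corresponding edge.

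For existence, whenever such a backtrack occurs, the detour segment of $\widetilde{\alpha}$ lies in a ball $B$ with both endpoints on a single sphere lift $\widetilde{\sigma}\subset\partial B$. Since $B$ is simply connected, the segment is homotopic rel endpoints into $\widetilde{\sigma}$, and a small push to the opposite side of $\widetilde{\sigma}$ removes two intersection points of $\widetilde{\alpha}$ with $\widetilde{\Sigma}$ without creating any new ones. Performing this homotopy $F_n$-equivariantly and projecting to $W$ gives a homotopy of $\alpha$ strictly reducing the number of intersections with $\Sigma$; iterating terminates at minimal position.

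For uniqueness, suppose $\alpha$ and $\alpha'$ are freely homotopic simple closed curves in minimal position, representing the conjugacy class of some $g\in F_n$. A free homotopy lifts to a $\langle g\rangle$-equivariant map $\mathbb{R}\times[0,1]\to\widetilde{W}$ whose boundary lines are $g$-invariant lifts $\widetilde{\alpha},\widetilde{\alpha'}$. Each descends to a $g$-invariant, backtrack-free edge-path in $T$, hence to a bi-infinite geodesic; uniqueness of the axis $L_g$ among $g$-invariant geodesics in $T$ forces the two projected paths to coincide. Thus $\widetilde{\alpha}$ and $\widetilde{\alpha'}$ cross the same ordered sequence of sphere lifts, and passing to an $\langle g\rangle$-fundamental domain gives the bijection $f$. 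Corresponding arcs lie in a common complementary ball of $W_\Sigma$ and connect the same pair of boundary components, so are properly homotopic there; the cyclic order on $\Sigma$-arcs is the cyclic order on the edges of $L_g/\langle g\rangle$, which is intrinsic to $g$, so with compatibly chosen orientations $f$ preserves it.

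The main obstacle I expect is the careful setup of the tree-of-balls dictionary, and in particular checking the existence step: one must verify that the homotopy across $\widetilde{\sigma}$ really introduces no new intersections with other sphere lifts. This works because the modification is supported in a single ball, so nothing outside $\overline{B}$ is touched and the new segment lies on one side of $\widetilde{\sigma}$ in the interior of that ball.
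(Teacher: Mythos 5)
Your proof is correct and follows essentially the same route as the paper: lift to the universal cover, pass to the dual tree of $\widetilde{\Sigma}$, observe that minimal position corresponds to the lifted path being a (backtrack-free) geodesic, and invoke uniqueness of geodesics/axes in trees. The only difference is presentational---you phrase uniqueness via the Bass--Serre axis of $g$ and work upstairs for the existence step, whereas the paper does the existence step directly in $W_\Sigma$ and appeals to uniqueness of geodesics without naming the axis---but the underlying argument is identical.
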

\begin{proof}
  Since $W_\Sigma$ is simply connected, an arc in $W_\Sigma$
  connecting a boundary component to itself can be homotoped through that
  boundary component, reducing the number of intersection points. This
  shows the statement i). 
  
  To see ii), we use the tree $T_\Sigma$.
  Every lift $\widetilde{\alpha}$ of $\alpha$ defines a bi-infinite path
  $r(\widetilde{\alpha})$ in the tree $T_\Sigma$. If $\alpha$
  intersects $\Sigma$ minimally, then $r(\widetilde{\alpha})$ is a
  geodesic in this tree, since it is a path without backtracking. 
  If one modifies $\alpha$ by a homotopy, the chosen 
  lift $\widetilde{\alpha}$ changes by a homotopy as well, and the
  endpoints at infinity of $r(\widetilde{\alpha})$ do not change.
  Since geodesics with given endpoints at
  infinity in a tree are unique, the geodesic $r(\widetilde{\alpha})$
  therefore also does not change.
  Since this geodesic in turn completely determines the intersection 
  pattern and the $\Sigma$-arcs of $\alpha$, the desired uniqueness 
  and part~ii) follows.
\end{proof}

We also need a similar minimal position for arcs with endpoints
sliding on a curve $\delta$. To fix notation, let $\delta$ be an embedded closed
curve in $W_n$. An \emph{arc relative to $\delta$} is an embedded arc $a$
in $W_n$
both of whose endpoints lie on $\delta$. A homotopy of such an arc
will always mean a homotopy though arcs relative to $\delta$. The arc
$a$ is called \emph{essential}, if it is not homotopic to a subset of $\delta$.

\begin{defi}\label{def:minimal-position-arc}
  We say that the arc $a$ is in \emph{minimal position (relative to
    $\delta$)} with respect to a sphere system $\Sigma$ if for one
  (and hence, any) lift $\widetilde{a}$ of $a$ into the universal
  cover of $W_n$ the following hold.
  \begin{enumerate}[i)]
  \item $\widetilde{a}$ intersects each lift $\widetilde{\sigma}$ of a
    sphere in $\Sigma$ in at most one point.
  \item If $\widetilde{\delta}$ is a lift of $\delta$ containing an
    endpoint of $\widetilde{a}$, then it intersects no sphere that
    $\widetilde{a}$ intersects.
  \end{enumerate}
  If the curve $\delta$ is understood, we will simply speak of minimal
  position of $a$.
\end{defi}
Note that part~ii) in particular implies that an endpoint of $a$ is 
not contained in any of the spheres in $\Sigma$.

  \begin{figure}
    \centering
    \def\svgwidth{\textwidth}
    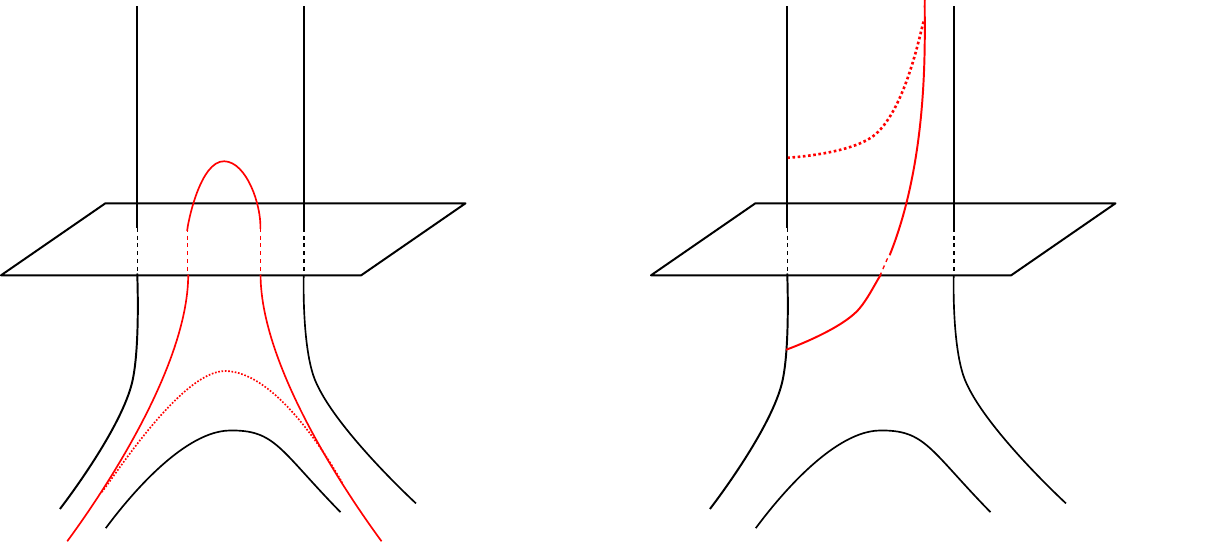  
    \caption{Ways in which arcs can fail to be in minimal position.}
    \label{fig:slide-arc}
  \end{figure}
Figure~\ref{fig:slide-arc} shows the two ways in which an arc
can fail to be in minimal position. 
It is true that $a$ is in minimal
position relative to $\delta$ with respect to $\Sigma$ if and only if
the number of intersections between $a$ and $\Sigma$ is minimized
among all arcs homotopic to $a$. This statement can be shown similarly
to Lemma~\ref{lem:minpos-arcs}, but we do not provide details
since we do not use this characterization of minimal position.

To study minimal position of arcs, we again use the tree $T_\Sigma$ 
defined above. As for curves, we assume that all arcs intersect the
product regions $N(\sigma)$ in straight segments $\{p\}\times(0,1)$.

\begin{lemma}\label{lem:minpos-arcs}
  Let $\Sigma$ be a simple sphere system and $\delta$ be a closed
  curve in minimal position with respect to $\Sigma$. Let $a$ be an
  essential arc relative to $\delta$. Then $a$ can be changed by a
  homotopy to be in minimal position with respect to $\Sigma$.

  On the other hand, suppose that $a$ is in minimal position with
  respect to $\Sigma$ and suppose that it does intersect $\Sigma$.

  Let $\widetilde{a}$ be a lift of $a$ to $\widetilde{W}_n$, and let   
  $\widetilde{U}$ be the complementary component of the full
  preimage $\widetilde{\Sigma}$ of $\Sigma$ which contains the 
  initial point of $\widetilde{a}$.
 
  Then the complementary components of $\widetilde{\Sigma}$ which 
  $\widetilde{a}$ crosses are determined by the homotopy classes 
  of $a$ and $\Sigma$ and the component $\widetilde{U}$. 
\end{lemma}
\begin{proof}
  To see that $a$ can be put in minimal position, we argue in the
  universal cover $\widetilde{W}_n$. The proof is by induction on the
  intersection number between $a$ and $\Sigma$.
  Suppose that $a$ is not in
  minimal position. Then a lift $\widetilde{a}$ violates either
  condition i) or ii) of
  Definition~\ref{def:minimal-position-arc}. Suppose that the first
  condition is violated and 
  $\widetilde{a}$ intersects a lift $\widetilde{\sigma}$ of a sphere
  in $\Sigma$ in at least two points. Then there is a subarc of
  $\widetilde{a}$ which returns to the same side of
  $\widetilde{\sigma}$ (since this sphere is separating in
  $\widetilde{W}_n$). Since each complementary component of
  $\widetilde{\sigma}$ is simply-connected, there is a homotopy of
  $\widetilde{a}$ which pushes this subarc through
  $\widetilde{\sigma}$, thereby reducing the number of intersection
  points.

  Similarly, if condition ii) is violated, then an initial segment of
  $\widetilde{a}$ together with a segment in a lift of $\delta$
  defines an arc that returns to the same side of
  $\widetilde{\sigma}$. Then one slides the initial segment of
  $\widetilde{a}$ to the other side of $\widetilde{\sigma}$, reducing
  the number of intersections.

\medskip
  To show the uniqueness statement, let $\delta_1$ be the lift of
  $\delta$ to $\widetilde{W}_n$ which 
  contains the initial point of $\widetilde{a}$. Similarly, let
  $\delta_2$ be the lift of $\delta$ intersecting $\widetilde{a}$ in
  its endpoint.

  We use the same tree $T_\Sigma$ and retraction $r:\widetilde{W}_n\to
  T_\Sigma$ as in the proof of
  Lemma~\ref{lem:existence-uniqueness-minimal-intersection-curves}. 
  Since $\delta$ is in minimal position with respect to
  $\Sigma$, $r(\delta_1)$ and $r(\delta_2)$ are bi-infinite geodesics
  in $T_\Sigma$. 

  First note that the geodesics $r(\delta_1)$ and
  $r(\delta_2)$ cannot intersect in the tree $T_\Sigma$. Namely, if
  they would intersect, then $\delta_1$ and $\delta_2$ intersect the
  same complementary component of $\widetilde{\Sigma}$ in $\widetilde{W}_n$.
  Then $\widetilde{a}$ could also be homotoped into this complementary
  component, violating the assumption that $a$ intersects $\Sigma$ essentially.

  Since $a$ is in minimal position with respect to $\Sigma$, the retraction
  $r(\widetilde{a})$ is a finite geodesic in $T_\Sigma$ (as it is a
  path without backtracking) or a single point. Furthermore,
  $r(\widetilde{a})$ connects $r(\delta_1)$ to $r(\delta_2)$.

  Therefore $r(\delta_1)$ and $r(\delta_2)$ are disjoint bi-infinite
  geodesics and $r(\widetilde{a})$ is a geodesic connecting the
  $r(\delta_1)$ and $r(\delta_2)$ in $T_\Sigma$. Since geodesics in a 
  tree are unique, the desired statement follows.
\end{proof}

\subsection{Ribbon and minimal position}
\label{sec:ribb-minim-posit}
Let $\varphi_0:S_g^1 \to W_n$ be the embedding of $S_g^1$ into $W_n$
defined by the doubling procedure.
Let $\beta$ be the boundary curve of $S_g^1$. The image
$\varphi_0(\beta)$ is an embedded closed curve in $W_n$ which maps to the
cusp class in $\pi_1(S_{g,1})=\pi_1(W_n)$. 

Next we describe a good position of the surface $\varphi_0(S_g^1)$
with respect to a sphere system.
In fact, we consider the more general case of a surface $\varphi(S_g^1)$, where
$\varphi:S_g^1\to W_n$ is any embedding of $S_g^1$ into $W_n$ which is
homotopic to $\varphi_0$ (note that such an embedding need
not be isotopic to $\varphi_0$). Up to modifying $\varphi$ with a 
small isotopy, we may assume that
$\Sigma$ intersects the surface $\varphi(S_g^1)$ transversely and we
will always do so.
Then the preimage $\varphi^{-1}(\Sigma)$ is a one-dimensional
submanifold of $S_g^1$, and hence it is 
a disjoint union of simple closed curves and properly embedded arcs.

\begin{defi}\label{def:ribbon-position}
  We say that $\varphi$ is in \emph{ribbon position with respect to
    $\Sigma$} if each component of $\varphi^{-1}(\Sigma)$ is a
  properly embedded arc. It is said to be in \emph{minimal position}
  if in addition $\varphi(\beta)$ is in minimal position with respect
  to $\Sigma$. In either case, we call the preimage
  $\varphi^{-1}(\Sigma)$ the \emph{arc system induced by $\Sigma$ and
    $\varphi$}.
\end{defi}

Note that a priori the
homotopy class of the arc system induced by $\Sigma$ and $\varphi$ need
not be determined by the isotopy class of $\Sigma$ even if $\varphi$
is in minimal position with respect to $\Sigma$. We will address this
problem below.

First, we show that $\varphi$ may always be put in minimal
position.
For this we use an inductive
method which is described in the next lemma. In the proof, we need
the following observation which also motivates the terminology ``ribbon
position''.

\medskip
Fix a simple sphere system $\Sigma$ and suppose that $\varphi$ is in
ribbon position with respect to the sphere system $\Sigma$. 
We will develop a convenient combinatorial way to describe the
components of $W_\Sigma \cap \varphi(S_g^1)$.

The intersection of $\varphi(S_g^1)$ with $W_\Sigma$ is a
union of surfaces $P_1,\ldots,P_k$. We call the $P_i$ the
\emph{polygonal disks defined by $\varphi$ relative to $\Sigma$}.
As $\Sigma$ is a simple sphere system, the arc system
$\varphi^{-1}(\Sigma)$ on $S_g^1$ is binding and hence
each of the surfaces $P_i$ is a disk whose
boundary is not completely contained in a boundary component of
$W_\Sigma$ (for the definition of $W_\Sigma$ compare the first
paragraph of Section~\ref{sec:reduc-sphere-syst}).

The disk $P_i$ is already determined by a spine embedded in it, as we
will explain below. This point of view will allow us to modify the
$P_i$ (and therefore the map 
$\varphi$) as if they were one-dimensional objects. Since $W_\Sigma$ is
three-dimensional, this will give us the desired freedom to put $\varphi$ in
a particularly convenient position. 

\medskip
Pick one polygonal disk, say $P_i$, and consider its boundary curve
$\delta_i$. We can write this curve in the form
$$\delta_i = a_1*b_1*\dots*a_r*b_r$$
where each $a_i$ is an arc contained in one of the boundary spheres
of $W_\Sigma$, and each $b_i \subset \varphi(\beta)$ is a properly
embedded arc in $W_\Sigma$ (compare Figure~\ref{fig:embedded-graph}).
 \begin{figure}[h]
    \centering
    \def\svgwidth{0.7\textwidth}
    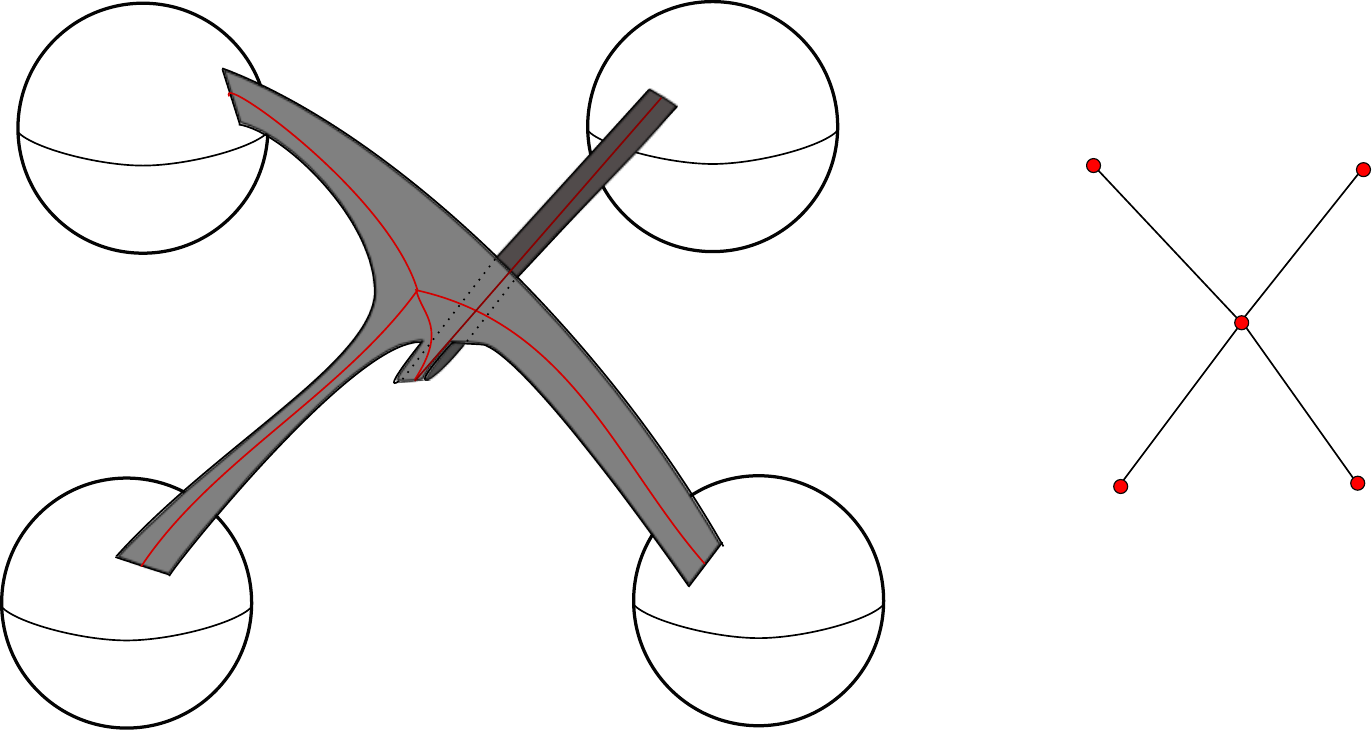  
    \caption{On the left: A polygonal disk $P_i$ (light gray on top,
      dark gray on the bottom) with its embedded copy of
      $\Gamma_i$. On the right: the corresponding graph
      $\Gamma_i$ with the cyclic order (ribbon structure) at the central vertex}
    \label{fig:embedded-graph}
  \end{figure}

We fix the set of arcs $\{a_i,1\leq i\leq r\}$ for the moment and equip each
$a_i$ with an orientation. We call this set of oriented arcs 
the \emph{boundary arcs} of the disk $P_i$. We want to describe the way 
that $P_i$ joins the boundary arcs in $W_\Sigma$ in a combinatorial way.

To this end, let $\Gamma_i \subset P_i$ be an embedded graph in $P_i$ defined in
the following way. The graph $\Gamma_i$ has one distinguished vertex
$v_0$ (called the \emph{interior vertex}) contained in the interior of
$P_i$ and one vertex $v_r$ 
contained in each boundary arc $a_r$ (called \emph{boundary vertices}). 
Each vertex $v_r$ $(r\geq 1)$ is connected by an edge to the vertex $v_0$. 
The oriented surface $P_i$ determines a 
\emph{ribbon structure} on $\Gamma_i$. Recall that a ribbon structure on 
a graph is a cyclic order of the half-edges at each vertex.

A ribbon graph defines a surface by replacing each vertex by a small disk, 
each edge by a small rectangle, and gluing these rectangles to the disks
given by the cyclic order on the vertices. However, in our case
we cannot yet reconstruct $P_i$ (even up to homotopy) from the ribbon graph $\Gamma_i$
for the following reason: there are up to homotopy two possibilities how to glue
in a band between, say, a boundary arc and a disk (compare Figure~\ref{fig:decorated}).
\begin{figure}[h]
  \centering
  \def\svgwidth{0.7\textwidth}
  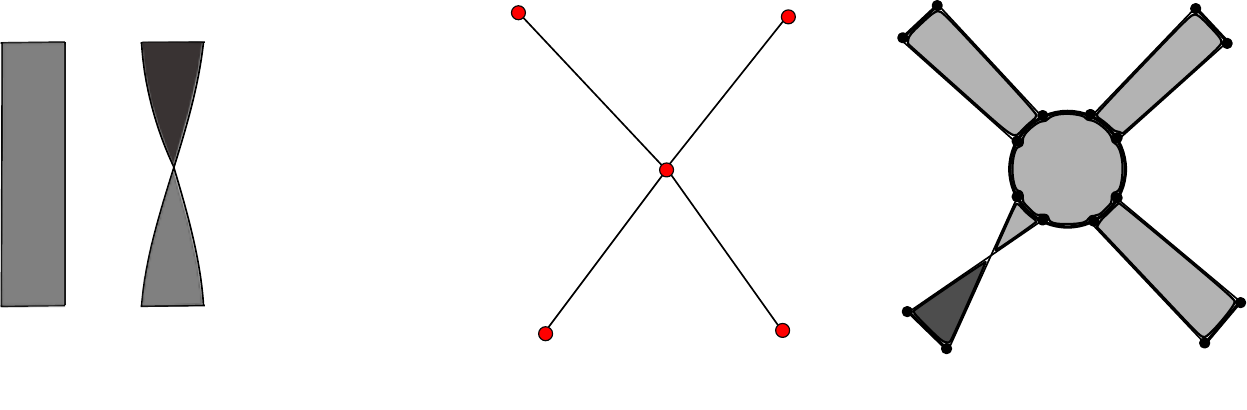  
  \caption{On the left: The two ways of gluing a band between two
    intervals. On the right: An example of a decorated ribbon graph and the
    corresponding surface}
  \label{fig:decorated}
\end{figure}

To avoid this issue, we use the following graphs to model the
polygonal disks $P_i$.
\begin{defi}
  A \emph{decorated ribbon graph} (for the sphere system $\Sigma$ and
  the boundary arcs $\{a_i\}$)
  is an embedded graph $\Gamma$ in $W_\Sigma$ satisfying
  \begin{enumerate}[i)]
  \item $\Gamma$ is a tree which has a valence-$1$ boundary
    vertex contained in each of the $a_i$.
  \item At every interior vertex of $\Gamma$, the adjacent half-edges
    are cyclically ordered (ribbon structure)
  \item Every edge of $\Gamma$ is labeled by a sign $+$ or $-$ (twist datum).
  \end{enumerate}
\end{defi}

\medskip
The \emph{surface associated to a decorated ribbon graph $\Gamma$}
is defined in the following way. Put small embedded 
oriented disks $D_v$ at the interior vertices $v$ of $\Gamma$ which
contain the star of $v$ in $\Gamma$ and such
that the cyclic order of edges at $v$ agrees with the positive
orientation given by $D_v$.

Suppose an edge $e$ connects an interior vertex $v$ to a boundary vertex
corresponding to a boundary arc $a_r$.
We then connect $a_r$ to the disk $D_v$ with a \emph{band} $B_r$, i.e.
an embedded product of two intervals $[0,1]\times[0,1]$ in $W_n$, as follows. 
One of the sides of $B_r$ is the arc
$a_r$, and the opposite side $a'_r$ is contained in $\partial D_v$. We call
these sides the \emph{horizontal sides}. Correspondingly, the
\emph{vertical sides} are properly embedded arcs in $W_n$.
The orientation of $\partial D_v$ 
determines a left and right endpoint
of each of the $a'_r$.

If the edge $e$ is decorated with a $+$, we match
the left endpoint of $a_r$ with the left endpoint of the interval on
$\partial D$, otherwise we pair the left with the right endpoint
(compare Figure~\ref{fig:decorated}).

Similarly, we glue bands between disks corresponding to two different interior
vertices according to the sign on the connecting edge.

\medskip
We will also need to modify decorated ribbon graphs. On the one hand, we will consider
\emph{homotopies of decorated ribbon graphs} which do not change the
combinatorics of the graph.

The other modifications are \emph{split} and \emph{collapse
  moves}. Namely, let $\Gamma$ be a decorated ribbon graph.
A collapse move requires two interior vertices $v_1,v_2$ which are
joined by an edge $e$. The decorated ribbon graph $\Gamma'$ obtained
by collapsing $e$ is the following: 
\begin{enumerate}[i)]
\item The underlying graph of $\Gamma'$ is obtained from $\Gamma$ by collapsing $e$
  to a vertex $\hat{v}$. In particular, every vertex of $\Gamma$ which
  is not equal to $v_1$ or $v_2$ is  
  also a vertex of $\Gamma'$. Every edge of $\Gamma$ except for $e$
  corresponds to a single 
  edge in $\Gamma'$. 
\item The ribbon structure at each vertex different from $\hat{v}$ is
  the same as in $\Gamma$. 
\item The ribbon structure at $\hat{v}$ is obtained in the following
  way. The edges at $\hat{v}$ are all  
  the edges in $\Gamma$ connected to either $v_1$ or $v_2$ (except for $e$).
  Let $e_1,\ldots,e_r,e$ be the cyclic order of edges at $v_1$, and
  $e,e'_1,\ldots,e'_s$ the cyclic 
  order of edges at $v_2$. If the edge $e$ was labeled by a $+$, then the cyclic
  order at $\hat{v}$ is 
  $$e_1,\ldots,e_r,e'_1,\ldots,e'_s$$
  and otherwise it is
  $$e_1,\ldots,e_r,e'_s,\ldots,e'_1$$
\item The twist data at all edges are as in $\Gamma$.
\end{enumerate}
A split move is an inverse to a collapse move. A split move is
defined by splitting the link of an interior vertex into two groups,
each of which is connected 
in the cyclic order. Then an additional edge with sign $+$ is
generated, such that the two groups are contained in the links of two different
vertices. 

\begin{lemma}\label{lem:ribbon-control}
  Fix a simple sphere system $\Sigma$ and boundary arcs $a_i$ in
  $\partial W_\Sigma$. 
  \begin{enumerate}[i)]
  \item Let $P_i$ be any polygonal disk, and $\Gamma_i$ the
    corresponding embedded ribbon graph. Then $\Gamma_i$ admits a
    twisting datum such that $P_i$ is homotopic,
    relative to its boundary, to the surface defined by $\Gamma_i$.
  \item Let $\Gamma$ and $\Gamma'$ be two decorated ribbon graphs
    which are homotopic (as decorated ribbon graphs). Then the
    surfaces defined by $\Gamma$ and $\Gamma'$ are homotopic relative
    to the boundary arcs $a_i$.
  \item Let $\Gamma'$ be obtained from $\Gamma$ by a collapse or 
    split move. Then the surfaces defined by $\Gamma$ and $\Gamma'$ 
    are homotopic relative to their horizontal sides in $\Sigma$.
  \end{enumerate}
\end{lemma}
\begin{proof}
\medskip
  The ribbon graph $\Gamma_i$ contained in $P_i$ has a
  single interior vertex. Therefore, part i) simply follows from the
  fact that there are up to homotopy only two ways to join the central
  disk to each of the boundary arcs. 

  Part ii) simply follows by extending
  a homotopy of a decorated ribbon graph $\Gamma$ to a homotopy of a small 
  regular neighborhood. Since the surface defined by $\Gamma$ may be assumed
  to lie in such a neighborhood, and is uniquely determined (up to homotopy) 
  by the decorated ribbon structure, the claim follows. Part iii) follows
  directly from the definitions of collapse and split moves.
\end{proof}

\begin{lemma}\label{lem:keeping-minimal}
  Let $\Sigma$ be a simple sphere system.
  Suppose that $\varphi$ is in minimal position with respect to
  $\Sigma$. Let $\sigma'$ be an embedded sphere
  disjoint from $\Sigma$ and let $\Sigma'$ be a simple sphere system 
  obtained from $\Sigma$ by either adding $\sigma'$, or by removing 
  one sphere $\sigma\in\Sigma$.

  Then there is an embedding
  $\varphi':S_g^1\to W_n$ with the following properties.
  \begin{enumerate}[i)]
  \item $\varphi'$ is homotopic to $\varphi$.    
  \item $\varphi'$ is in minimal position with respect to $\Sigma'$.
  \end{enumerate}
\end{lemma}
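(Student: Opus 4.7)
The two cases will be treated separately. If $\Sigma' = \Sigma \setminus \{\sigma\}$, I would simply set $\varphi' = \varphi$: ribbon position holds because $\varphi^{-1}(\Sigma') \subset \varphi^{-1}(\Sigma)$ is still a union of properly embedded arcs, and the universal-cover characterization of minimal position in Lemma~\ref{lem:existence-uniqueness-minimal-intersection-curves} (each lift of $\varphi(\beta)$ meets each lift of each sphere in the system at most once) is monotone under passage to sub-sphere-systems.

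For $\Sigma' = \Sigma \cup \{\sigma'\}$ with $\sigma' \cap \Sigma = \emptyset$, I would modify $\varphi$ in two stages. In Stage~1, after putting $\varphi$ transverse to $\sigma'$, the preimage $\varphi^{-1}(\sigma')$ decomposes into properly embedded arcs and closed circles, and I would eliminate the circles by innermost-disk surgery. The key input is that $\varphi \simeq \varphi_0$ induces an isomorphism on $\pi_1$, so any closed circle $c \subset \varphi^{-1}(\sigma')$ is null-homotopic on $S_g^1$ (its image bounds a disk on the sphere $\sigma'$, hence is null-homotopic in $W$) and therefore bounds a disk $D' \subset S_g^1$. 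Because $c$ lies in the interior of $S_g^1$ and the arcs of $\varphi^{-1}(\Sigma)$ meet $\beta$ but cannot cross $c$ (as $\sigma' \cap \Sigma = \emptyset$), the interior of $D'$ is disjoint from $\varphi^{-1}(\Sigma)$. I would pick $c$ bounding an innermost disk $D_a \subset \sigma'$ (interior disjoint from $\varphi(S_g^1)$) and replace $\varphi|_{D'}$ by a small push-off of $D_a$ to the side of $\sigma'$ opposite $\varphi(D')$; innermostness of $D_a$ together with transversality of $\varphi(S_g^1)$ along $c$ ensures, possibly after a small corner-smoothing, that the push-off is disjoint from $\varphi(S_g^1)\setminus\varphi(D')$, so $\varphi'$ is still an embedding. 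The modification is supported in the interior of $S_g^1$ away from $\varphi^{-1}(\Sigma)$, so ribbon position with respect to $\Sigma$ and minimal position of $\varphi(\beta)$ with respect to $\Sigma$ persist, while the number of closed components of $\varphi^{-1}(\sigma')$ strictly decreases, forcing termination.

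The main conceptual point is that each such surgery realizes a homotopy of $\varphi$, and this is where I would appeal to the asphericity of $S_g^1 = K(F_{2g},1)$; this sidesteps the delicate question of whether the sphere $\varphi(D') \cup D_a$ bounds a ball in $W$ (which it need not in general, since the lens between $\varphi(D')$ and $D_a$ may contain boundary spheres of the complementary component of $\Sigma$). Since $\varphi$ and $\varphi'$ agree outside the disk $D'$ and the inclusion $S_g^1 \setminus D' \hookrightarrow S_g^1$ is surjective on $\pi_1$, the two maps induce the same conjugacy class of homomorphisms $F_{2g} \to \pi_1(W)$, and by asphericity the class in $[S_g^1, W] = \mathrm{Hom}(F_{2g},\pi_1(W))/\mathrm{conj}$ forces $\varphi' \simeq \varphi$. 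Once Stage~1 terminates with ribbon position for $\Sigma'$, Stage~2 ensures minimality of $\varphi(\beta)$ with respect to $\sigma'$: any failing $\Sigma'$-arc of $\varphi(\beta) \cap W_{\Sigma'}$ connects a boundary sphere of $W_{\Sigma'}$ to itself and therefore bounds a disk in its simply connected complementary component, which I would eliminate by a bigon move of $\varphi(\beta)$ extended via the homotopy extension property to a homotopy of $\varphi$ supported in a collar of $\beta$ in $S_g^1$. These collar moves can at worst re-route the interior arcs of $\varphi^{-1}(\Sigma')$ but cannot introduce closed components, so ribbon position is preserved and the procedure finishes with $\varphi'$ as required.
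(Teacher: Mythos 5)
Your proposal splits the argument differently from the paper but arrives at the same two sub-goals: first achieve ribbon position with respect to $\Sigma'=\Sigma\cup\{\sigma'\}$, then achieve minimal position of $\varphi(\beta)$. Your Stage~1 is genuinely different: you put $\varphi$ transverse to $\sigma'$ and remove the closed components of $\varphi^{-1}(\sigma')$ by innermost-disk surgery, whereas the paper never lets circles appear in the first place -- it collapses each disk $P_i = \varphi(S_g^1)\cap W_\Sigma$ to a decorated ribbon graph $\Gamma_i$, notes that $\sigma'$ meets $\Gamma_i$ in finitely many points, and reconstructs the surface in a small enough neighborhood of $\Gamma_i$ that the intersection with $\sigma'$ is automatically a union of arcs. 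Your surgery argument is fine, including your observation that the interior of $D'\subset S_g^1$ is disjoint from $\varphi^{-1}(\Sigma)$ and your worry-and-resolution about why each surgery is a homotopy. One small imprecision: saying ``$\varphi'\simeq\varphi$ by asphericity of $S_g^1$'' is not quite the right justification, since for a $K(\pi,1)$ domain one does not in general have $[X,Y]\cong\mathrm{Hom}(\pi_1X,\pi_1Y)/\mathrm{conj}$ unless $Y$ is also aspherical (and $W_{2g}$ is not). What you actually need is that $S_g^1$ is homotopy equivalent to a one-dimensional complex (a wedge of $2g$ circles), so that a map out of it is determined up to free homotopy by the induced conjugacy class of homomorphisms on $\pi_1$, for \emph{any} target. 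With that restated, Stage~1 is correct and arguably more elementary than the ribbon-graph collapse.

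Stage~2 has a genuine gap. You eliminate a problematic $\Sigma'$-arc $b$ of $\varphi(\beta)$ by a bigon move and then assert that extending this to a homotopy of $\varphi$ ``supported in a collar of $\beta$'' cannot introduce closed components of $\varphi^{-1}(\Sigma')$, ``so ribbon position is preserved.'' This is precisely the step the paper's ribbon-graph machinery is built to control, and it is not automatic. The bigon $B$ bounded by $b$ and an arc on $\sigma'$ lives in a simply connected complementary region $U$ of $W_{\Sigma'}$, but other disks of $\varphi(S_g^1)\cap U$ (and the disk $P_i$ containing $b$ itself, away from its collar of $\beta$) may pass through $B$; when one tries to push the collar of $\beta$ across $\sigma'$, there is no a priori reason the resulting embedded surface meets $\sigma'$ only in arcs. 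The paper handles this by collapsing the relevant $P_i$ to its decorated ribbon graph $\Gamma_i$, performing the homotopy on the one-complex $\Gamma_i$ inside $W_\Sigma$ (where there is plenty of room and no embeddedness constraint), and then reconstructing the embedded surface in a thin neighborhood of the new graph $\Gamma_i'$ -- which, being thin, meets spheres only in arcs. Without some device of this kind (or at least an argument that the bigon move can be realized by an ambient isotopy of $W$ supported near $\sigma'$, which you would still need to justify), the assertion that ribbon position survives the collar homotopy is unsupported.
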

\begin{proof}
  Note first that removing a sphere $\sigma$ from $\Sigma$ preserves
  minimal position. Hence, we only need to consider the case 
  that $\Sigma' = \Sigma \cup \{\sigma'\}$.

  By assumption, $\varphi$ is in minimal position with respect to
  $\Sigma$. We will therefore only work in $W_\Sigma$ and consider
  $\sigma'$ as a fixed embedded sphere in $W_\Sigma$.
  To put $\varphi$ in minimal position with respect to $\Sigma'$, we  
  need to control how the polygonal disks of $\varphi$ in $W_\Sigma$
  intersect $\sigma'$. 

  To get started, we modify $\varphi$ so that the polygonal disks of
  $\varphi$ in $W_\Sigma$ have a particularly convenient form.
  Namely, let $P_1,\ldots,P_k$ be these polygonal disks (i.e. 
  the components of $\varphi(S_g^1) \cap W_\Sigma$) and let $\Gamma_i
  \subset P_i$ be the embedded decorated ribbon graphs described above.

  By Lemma~\ref{lem:ribbon-control}~i), we may modify $\varphi$ by a
  homotopy, such that each $P_i$ is the surface associated to the
  decorated ribbon graph $\Gamma_i$ without changing
  $\varphi^{-1}(\Sigma)$. 

  We may also assume that after this homotopy $P_i$ is contained in a
  small regular neighborhood of $\Gamma_i$. Intuitively, each $P_i$
  now looks as depicted in Figure~\ref{fig:embedded-graph}.

  \medskip
  As the next step, we apply an isotopy supported in $W_\Sigma$ to $\varphi$
  which puts $P_i$ and $\Gamma_i$ in general position with respect to
  $\sigma'$. More specifically, we can ensure:
  \begin{enumerate}
  \item In a neighborhood of the boundary of $W_\Sigma$, $\varphi$ is
    unchanged. 
  \item $\sigma'$ intersects each $\Gamma_i$ transversely in finitely
    many points.
  \item No intersection point of $\sigma'$ with $\Gamma_i$ is a vertex
    of $\Gamma_i$. 
  \item The intersection between $P_i$ and $\sigma'$ consists of a
    disjoint union of arcs. Each of these arcs corresponds to an
    intersection point of $\Gamma_i$ with $\sigma'$.
  \end{enumerate}

  As a result of this isotopy, each component of $\varphi(S_g^1) \cap
  W_{\Sigma'}$ is a disk whose boundary contains a subarc of $\varphi(\beta)$ 
  and hence $\varphi$ is in ribbon position with respect to
  $\Sigma\cup\{\sigma'\}$. 

  \medskip
  To complete the proof, it remains to show that $\varphi$ can be
  changed by a homotopy to put it in minimal position with respect to
  $\Sigma'$. This will be done inductively, reducing the number of intersections
  of $\varphi(\beta)$ with $\sigma'$. 

  To construct this homotopy of $\varphi$ we will use
  Lemma~\ref{lem:ribbon-control} 
  to homotope the polygonal disks $P_i$ in $W_\Sigma$ relative to
  their boundary arcs. Informally speaking, we will make the graphs
  $\Gamma_i$ (which are one-dimensional objects, and therefore
  completely flexible) intersect $\sigma'$ minimally, and then make
  the $P_i$ follow along. We now give the formal details of 
  this argument.

  \medskip
  Let $b$ be a $\Sigma'$--arc of $\varphi(\beta)$. 
  Assume first that $b$ also is a $\Sigma$--arc. 
  Then $b$ has both endpoints on a sphere distinct from 
  $\sigma'$. By assumption on $\Sigma$, the arc $b$ does not 
  connect the same boundary
  component of $W_{\Sigma}$ to itself. This then also
  holds true for $b$ viewed as a $\Sigma^\prime$-arc. 
 
  If $b$ is not of this form, at least one of its endpoints is
  contained in the sphere $\sigma'$. Suppose that both endpoints of
  $b$ are contained on the same side of $\sigma'$ (alternatively, on
  the same boundary component of $W_{\Sigma'}$). We call such subarcs
  of $\varphi(\beta)$ \emph{problematic}. Note that a problematic subarc is
  disjoint from $\Sigma$. 

  The condition that $\varphi(\beta)$ is in minimal position with
  respect to $\Sigma'$ exactly means that there are no problematic
  arcs. We therefore aim to modify $\varphi$ by a homotopy that
  eliminates all problematic arcs. We will do so inductively, reducing
  the number of problematic arcs. During this induction, we will
  assume that the disks $P_i$ are surfaces defined by decorated ribbon
  graphs $\Gamma_i$. Initially, this is the case, as explained
  above.

\medskip
  Let $P_i$ be the component of $\varphi(S_g^1)\cap W_\Sigma$
  containing $b$ in its boundary. Since $P_i$ is assumed to
  be the surface associated to the decorated ribbon graph $\Gamma_i$,
  the arc $b$ also defines a path $\gamma_b$ in the graph
  $\Gamma_i$. We distinguish two cases.

  First, assume that $\gamma_b$ does not intersect any vertices of $\Gamma_i$.
  Then there is an edge $e$ of $\Gamma_i$ which itself joins the same side of
  $\sigma'$ to itself. Since $W_\Sigma-\sigma'$ is simply connected, there is a
  homotopy of $\Gamma_i$ which moves $e$ to the other side of $\sigma'$.
  By Lemma~\ref{lem:ribbon-control} one can then also homotope $P_i$ 
  (and therefore $\varphi$) such that the number of problematic arcs decreases.
  
  The other case is that $\gamma_b$ intersects at least one vertex of
  $\Gamma_i$. Note that then 
  these vertices are interior vertices of $\Gamma_i$. This case is depicted
  in Figure~\ref{fig:ribbon-to-minimal}.

  Applying collapse moves to $\Gamma_i$ if necessary, we may assume that 
  the arc $\gamma_b$ intersects a single interior vertex $v$ of $\Gamma_i$.
  Furthermore, applying a split move if necessary, we may assume that
  this vertex is trivalent. Namely,  
  there are two edges $e_1,e_2$ meeting at $v$, such that $\gamma_b$
  is a subarc of $e_1\cup e_2$.
  If $v$ is more than trivalent, we apply a split move to generate a new vertex $v'$
  which is adjacent to $e_1, e_2$ and a new edge $e$. Now, $\gamma_b$
  only passes through the trivalent vertex $v'$.

  Since each complementary component of $\sigma'$ in $W_\Sigma$ is
  simply connected, we can 
  homotope the trivalent vertex and $e_1, e_2$ to the other side of $\sigma'$.
  This procedure reduces the number of intersections of $\Gamma_i$ with $\sigma'$.
  Again using Lemma~\ref{lem:ribbon-control}, there is then a homotopy
  of $P_i$ which reduces the number of problematic arcs.

  \begin{figure}[h]
    \centering
    \def\svgwidth{0.7\textwidth}
    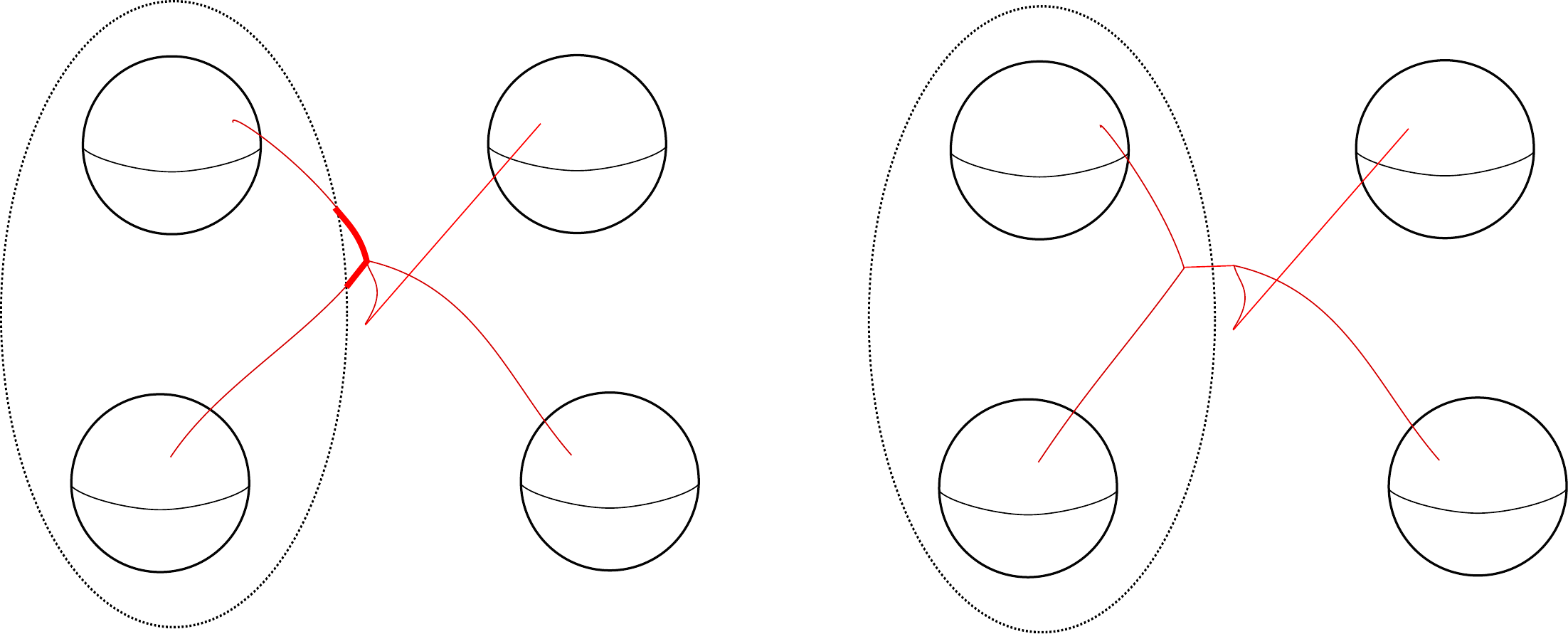  
    \caption{Reducing the number of problematic arcs. On the left: the
    situation before homotopy. The bold part of the graph
    corresponds to $\gamma_b$ for a problematic arc $b$ connecting the
    (left) side of the 
  sphere $\sigma'$ to itself. On the right: the modified graph removing
  this problematic arc.}
    \label{fig:ribbon-to-minimal}
  \end{figure}
  At the end of the induction, there are no problematic arcs, and
  therefore $\varphi$ is in minimal position.
\end{proof}
\begin{cor}\label{cor:minimal-position-exists}
  For every sphere system $\Sigma$, the map $\varphi_0$ may be
  homotoped so that it is in minimal position with respect to $\Sigma$.
\end{cor}
\begin{proof}
  We begin by constructing a sphere system $\Sigma_0$ in the following
  way. Let $A_0$ be a binding arc system of the surface $S_g^1$ with a
  single complementary component. As
  described in Section~\ref{sec:bundles}, the product
  $A_0\times[0,1]$ in the handlebody $U_{2g}$ doubles to a sphere
  system in $W_n$ which we denote by $\Sigma_0$. By construction,
  $\varphi_0$ is in minimal position with respect to
  $\Sigma_0$. Indeed, there is a single polygonal disk corresponding
  to the complementary component of $A_0$.

  Now the corollary follows by induction on the distance of $\Sigma$
  to $\Sigma_0$, using Lemma~\ref{lem:keeping-minimal} and
  Lemma~\ref{lem:inclusion-exclusion}. 
\end{proof}
Next, we strengthen the notion of minimal position further in order to
make the induced arc system unique.
In order to do so, we fix a number of base data once and for all:
\begin{enumerate}
\item An orientation of the boundary curve $\beta\subset S_g^1$.
\item A maximal binding arc system $A_0$ of $S_g^1$.
\item An orientation for each $a\in A_0$.
\end{enumerate}
The additional requirement that will make the position of $\varphi$
unique concerns the position of the arcs $\varphi(a)$ for $a\in
A_0$. Recall from Lemma~\ref{lem:minpos-arcs} that an arc whose
endpoints are allowed to slide on a curve has a well-defined unique minimal
position with respect to a sphere system $\Sigma$ \emph{if it
  intersects $\Sigma$}. If the arc can be made disjoint from $\Sigma$
by a homotopy of arcs relative to the curve, then the complementary
component it is contained in is not well-defined (see
Figure~\ref{fig:minimal-arc}).
  \begin{figure}[h]
    \centering
    \def\svgwidth{0.7\textwidth}
    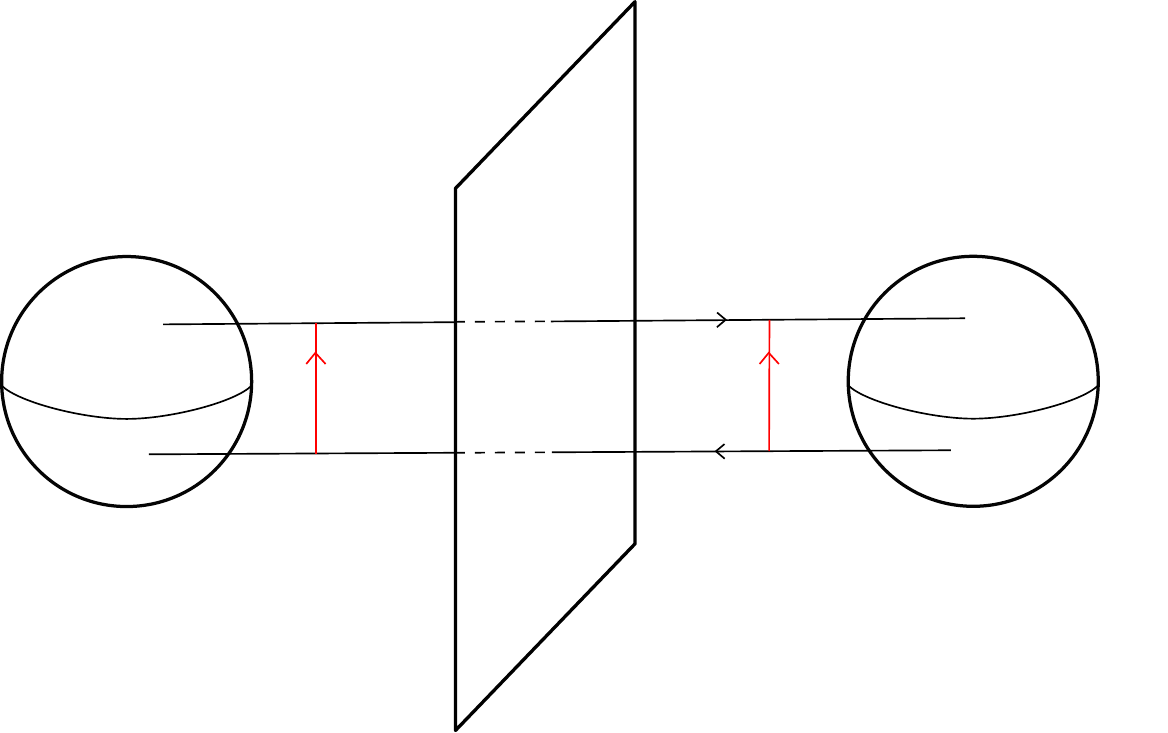  
    \caption{Disjoint arcs do not have a unique minimal position. The arcs
    $a_1$ and $a_2$ are homotopic as arcs relative to
    $\varphi(\beta)$, but are contained in different complementary
    components of $\Sigma$.}
    \label{fig:minimal-arc}
  \end{figure}
To give these arcs a unique position, we use the orientations of the
arcs: we will consider the arc $a_1$ in Figure~\ref{fig:minimal-arc}
preferable to $a_2$, since its initial point is further along
$\varphi(\beta)$ (with its orientation).
\begin{defi}
  We say that $\varphi$ is in \emph{($A_0$-)tight minimal position} if
  it is in minimal position with respect to $\Sigma$, and if
  additionally the following hold.
  \begin{enumerate}[i)]
  \item Each $\varphi(a), a\in A_0$, is in minimal position with
    respect to $\Sigma$ as an arc relative to $\varphi(\beta)$.
  \item If $\varphi(a)$ is disjoint from $\Sigma$, then the initial
    point of $\varphi(a)$ is at the furthest position along
    $\varphi(\beta)$ among all arcs still satisfying i).
  \end{enumerate}
\end{defi}
Note that condition ii) makes sense since $S_g^1$ is not an annulus,
and therefore one cannot push the arc along the boundary indefinitely. We also
remark that condition ii) is not really necessary to make the
arguments work, but just helps to avoid some case distinctions.

We will now show that ($A_0$-)tight minimal position exists and is well-defined.
\begin{lemma}\label{lem:existence-tight-pos}
  For each sphere system $\Sigma$, $\varphi$ may be homotoped to be in
  ($A_0$-)tight minimal position with respect to $\Sigma$.
\end{lemma}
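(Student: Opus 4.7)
The plan is to achieve the three defining properties of tight minimal position in three successive stages, taking care that each stage preserves the properties established in the previous one.

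Stage 1. We first arrange $\varphi$ to be in plain minimal position with respect to $\Sigma$. Since the standard embedding $\varphi_0$ is trivially in minimal position with respect to the empty sphere system, we may apply Lemma~\ref{lem:keeping-minimal} inductively, adding the spheres of $\Sigma$ one at a time. This produces an embedding $\varphi_1$, homotopic to $\varphi$, in minimal position with respect to all of $\Sigma$.

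Stage 2. We enforce condition (i) one arc at a time. Since the arcs of $A_0$ are pairwise disjoint on $S_g^1$, we may treat them independently. Fix $a\in A_0$. Pass to the universal cover $\widetilde{W}$, where $\widetilde{\Sigma}$ has a dual tree and each lift of $\varphi_1(\beta)$ is a line in that tree by minimal position of $\varphi_1(\beta)$. As in the argument of Lemma~\ref{lem:minpos-arcs}, a lift of $\varphi_1(a)$ joins two such lines, and there is a unique shortest connection between them in the dual tree; we homotope $\varphi_1(a)$ (sliding endpoints on $\varphi_1(\beta)$) to realize this shortest connection, which is then in arc-minimal position. We extend this to a homotopy of the embedding $\varphi_1$ supported in a tubular neighborhood of $a$ in $S_g^1$, chosen small enough to be disjoint from the other arcs of $A_0$ and to meet $\beta$ only near the endpoints of $a$. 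Outside this neighborhood, $\varphi_1^{-1}(\Sigma)$ is unchanged; inside, the modification only removes intersections and can be arranged to keep $\varphi_1^{-1}(\Sigma)$ a union of arcs. Hence ribbon position and the minimal position of $\varphi(\beta)$ are retained. Iterating over all $a\in A_0$ yields an embedding $\varphi_2$ satisfying (i).

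Stage 3. For each $a\in A_0$ with $\varphi_2(a)$ disjoint from $\Sigma$, both endpoints of $\varphi_2(a)$ lie on a common component $I$ of $\varphi_2(\beta)\setminus\Sigma$. Call a position of the initial point of $\varphi_2(a)$ on $I$ \emph{admissible} if the resulting arc is still in arc-minimal position with respect to $\Sigma$; the admissible set is a subarc of $I$. Since $I$ is a finite segment with a fixed orientation, this subarc has a well-defined supremum in the direction of $\beta$, and we slide the initial point there via a homotopy of $\varphi_2$ supported in a small disk on $S_g^1$ near the endpoint. Because this homotopy is disjoint from $\Sigma$, all previous conditions are preserved.

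The main obstacle I expect is Stage~2: a priori an arc-by-arc homotopy of the individual curves $\varphi_1(a)$ could create closed components of $\varphi_1^{-1}(\Sigma)$ or interfere with the minimal position of neighboring arcs or of $\varphi_1(\beta)$. The key technical point is therefore to localize the support of each arc homotopy in a tubular neighborhood disjoint from all other arcs of $A_0$ and from $\beta$ away from the two endpoints, and to verify that the induced change in $\varphi_1^{-1}(\Sigma)$ inside this neighborhood consists only of modifying arcs, never of introducing closed curves; once this is established, all three conditions can be met simultaneously.
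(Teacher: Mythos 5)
Stage 1 and Stage 3 are fine and match the paper's approach. The gap is in Stage 2, and you have in fact identified the danger yourself but then assumed it away rather than resolving it. The arc $\varphi_1(a)$ is not a free arc in $W$: it lies on the embedded surface $\varphi_1(S_g^1)$, and any homotopy of it must be realized by a homotopy of the whole embedding. You assert that the move bringing $\varphi_1(a)$ to minimal position can be realized by a homotopy of $\varphi_1$ supported in a thin tubular neighborhood of $a$ in $S_g^1$, disjoint from the other arcs of $A_0$ and meeting $\beta$ only near $\partial a$. That is precisely what fails in general.

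The reason is visible in the paper's own argument. Intersections of $\varphi_1(S_g^1)$ with $W_\Sigma$ are disks $P_i$, and the relevant combinatorics is captured by collapsing each $P_i$ to a decorated ribbon graph $\Gamma_i$. When an excess intersection of $\varphi_1(a)$ with $\Sigma$ comes from a subarc $a'\subset P_i$ whose endpoints lie on the same boundary sphere but do not connect adjacent boundary components of $P_i$, removing it requires a \emph{Whitehead move} on $\Gamma_i$: one splits the central vertex and slides one of the two resulting sub-stars of edges across the sphere. This move necessarily repositions several bands of $P_i$ simultaneously, not just the band containing $a$, so the supporting region of the homotopy is essentially all of $P_i$, not a thin strip around $a$. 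Moreover one must then \emph{check} (as the paper does, using that $A_0$ is embedded, so no segment of $\varphi(A_0)$ meets both sides of the split) that this global move does not create new intersections for the other arcs or destroy minimal position of $\beta$. Your proposal has neither the Whitehead move nor this verification, so the induction on intersection number you implicitly rely on is not established. To repair the argument you would need to replace the ``localize in a tubular neighborhood of $a$'' step with the paper's ribbon-graph collapse and Whitehead-move mechanism, together with the two checks that (a) which spheres the segments of $\varphi(\beta)$ cross is unchanged and (b) the move does not create new intersections of $\varphi(A_0)$ with $\Sigma$.
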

\begin{proof}
  By Corollary~\ref{cor:minimal-position-exists}, we may assume that
  $\varphi$ is in minimal position with respect to $\Sigma$.
  To clear up notation, we will refer to ($A_0$-)tight minimal
  position simply as tight minimal position in this proof and consider
  $A_0$ as fixed.

  We will now improve $\varphi$ to be in tight minimal
  position. 
  We first focus on property~i) of tight minimal position, and
  inductively show that it can be ensured. 
  The idea is that the homotopies needed to put
  $\varphi(A_0)$ in minimal position with respect to $\Sigma$ can be
  done by homotopies of $\varphi$. The main issue is to guarantee that
  $\varphi$ stays an embedding. This requires again the notions of
  polygonal disks and decorated ribbon graphs as in the proof of
  Lemma~\ref{lem:keeping-minimal}. 

  Formally, we will induct on the number of intersection points
  between $\Sigma$ and $\varphi(A_0)$.

  \medskip
  Suppose that $\varphi(A_0)$ is not in minimal position. Then there
  is an arc $a \in A_0$ such that $\varphi(a)$ violates one of the two
  conditions of minimal position of arcs
  (Definition~\ref{def:minimal-position-arc}). 

  \medskip
  Assume that $\varphi(a)$ violates condition~i) of
  Definition~\ref{def:minimal-position-arc}. Then there is a lift
  $\widetilde{a}$ of $\varphi(a)$ to the universal cover
  $\widetilde{W}_n$ with the following property: there is a lift
  $\widetilde{\sigma}$ of a sphere in $\Sigma$ and a subarc
  $\widetilde{b}\subset \widetilde{a}$ which begins and ends on the same side of
  $\widetilde{\sigma}$ and does not intersect any other sphere in
  $\widetilde{\Sigma}$.  

  We now consider the image $b$ of $\widetilde{b}$ in $W_\Sigma$. This
  is an arc connecting one of the boundary components of $W_\Sigma$ to
  itself. Let $P$ be the polygonal disk containing $b$, and let
  $\Gamma$ be the decorated ribbon graph defining $P$.

  We distinguish two cases.
  First suppose that $b$ connects the same
  boundary arc of $P$ to itself. In that case, there is an interval
  $I$ in that boundary arc, such that $I\cup b$ bounds a disk $D$ in
  $P$. In this situation, there is an isotopy of $\varphi$ which does 
  not change the image $\varphi(S_g^1)$, and such that after this 
  isotopy $\varphi^{-1}(b)$ is mapped to $I$. Then one can apply a further 
  isotopy moving $I$ to the side of $\sigma$ not containing $b$
  (in total, one pushes the disk $D$ to the other side of the 
  boundary component; compare the left side of Figure~\ref{fig:slide-arc} 
  for this situation). These isotopies reduce the number of intersections 
  of $\varphi(A_0)$ with $\Sigma$. We may thus assume by induction that 
  we have removed each subarc $b$ of this form. 

  The second case is that $b$ connects two different boundary
  arcs of $P$ to each other. As in the proof of
  Lemma~\ref{lem:keeping-minimal}, such an arc then defines a path
  $\gamma_b$ in $\Gamma$ which contains the central vertex. More precisely,
  $\gamma_b$ consists of two edges $e_1,e_2$ of $\Gamma$. 
  Note that since $\varphi$ is in minimal position, the edges $e_1,e_2$ are 
  not adjacent in the cyclic order given by the ribbon structure, since both of
  them have an endpoint on the same boundary component of $W_\Sigma$.
  Thus, the graph $\Gamma$ has edges both to the right and to the 
  left of $\gamma_b$.

  As a first step to modify $\varphi$, we will perform at most 
  two split moves to ensure that $\gamma_b$ intersects exactly one
  vertex $v$ which then has valence $4$. Namely, if there is more
  than one edge to the right of $\gamma_b$, perform a split separating
  these off to a new vertex. We do the same for the left side (this
  procedure is depicted in the left and middle of
  Figure~\ref{fig:whitehead}). 
 \begin{figure}
    \centering
    \def\svgwidth{\textwidth}
    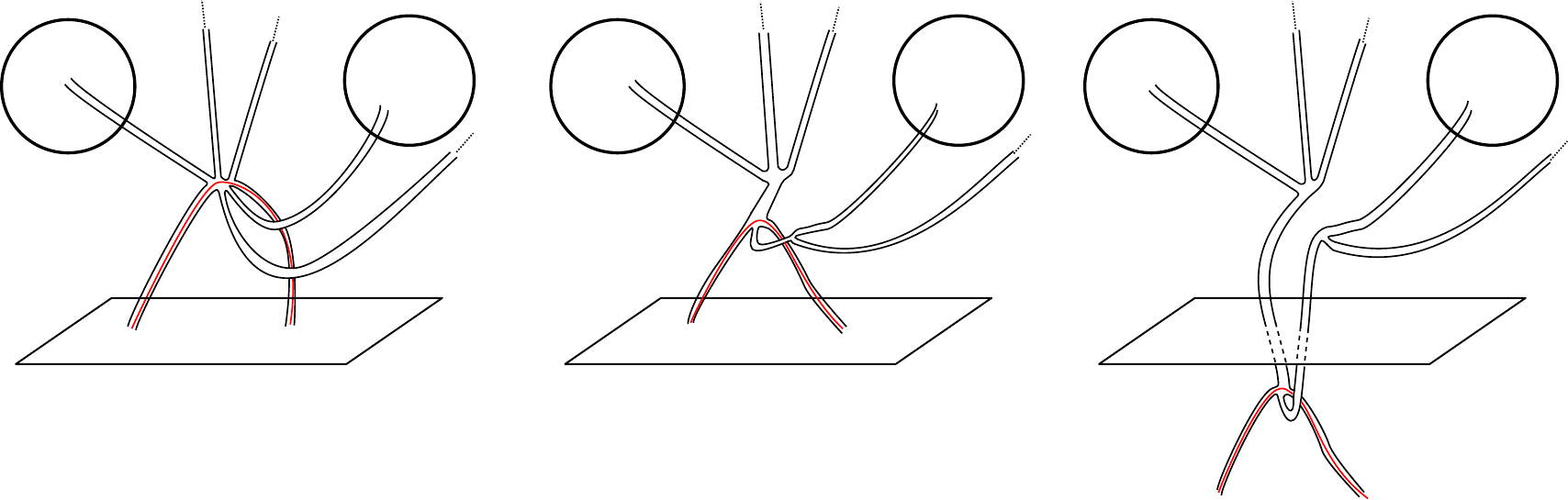  
    \caption{Moving saddles. All three surfaces are in minimal
      position, but the intersection pattern changes.}
    \label{fig:whitehead}
  \end{figure} 

  Furthermore, the following is true: let $b'$ be any component
  of $\varphi(A_0) \cap P$ and let $\gamma_{b'}$ be the arc in $\Gamma$
  it defines. Then $\gamma_{b'}$ shares an endpoint with $\gamma_b$ if
  it intersects $v$. This is due to the fact that $A_0$ is embedded in $S_g^1$
  and therefore $\gamma_{b'}$ does not intersect both an edge to the
  left and to the right of $\gamma_b$.

  Now we can modify $\varphi$ by pushing the vertex $v$ to the other
  side of $\sigma$ (as before, homotoping the graph first, and then
  using Lemma~\ref{lem:ribbon-control} to extend that homotopy to one
  of $\varphi$). As a result of this procedure, the image of the
  arc $b$ does not intersect $\sigma$ anymore (compare the middle and
  right of Figure~\ref{fig:whitehead}).

  To finish the inductive step, we have to show that no new intersections
  of $\varphi(A_0)$ with $\Sigma$ have been generated during this
  homotopy. The only possibility for new intersection points is on the
  image of $P$, and then they would need to be due to arcs $b'$ whose
  corresponding $\gamma_{b'}$ intersects $v$. 
  But, as remarked above, any such $\gamma_{b'}$
  shares an endpoint with $\gamma_b$. Thus, either $b'$ does not
  intersect $\sigma$ after the homotopy (if it was equal to
  $\gamma_b$), or $b'$ intersects $\sigma$ in a single point both
  before and after the homotopy.

  \medskip
  The case of a subarc $b$ which violates condition~ii) of
  Definition~\ref{def:minimal-position-arc} can be handled in the same
  way: either it and a segment in $\varphi(\beta)$ bound a disk in
  some polygonal disk, or it defines a path $\gamma_b$ in some
  $\Gamma$ which connects a boundary vertex to the central
  vertex. As above, one can then apply split moves and push the
  resulting vertex to the other side of $\sigma$.
  By induction, we may therefore assume that $\varphi(A_0)$ is in
  minimal position with respect to $\Sigma$.

  \medskip
  Condition~ii) of tight minimal position can then simply be
  guaranteed by moving those arcs in $\varphi(A_0)$ which are disjoint
  from $\Sigma_0$ along $\varphi(S_g^1)$ to be in the correct position.
\end{proof}

\subsection{Uniqueness of position}
We now show the uniqueness statement for tight minimal position which
is the central tool in the proof of Theorem~\ref{thm:map-in-out}. 

\begin{lemma}\label{lem:uniqueness-tight-position}
  Suppose that $\varphi$ is in ($A_0$-)tight minimal position with respect to
  $\Sigma$. Then the homotopy class of the  binding arc system
  $\varphi^{-1}(\Sigma)$ 
  is determined by the homotopy classes of $\Sigma$ and $\varphi$.
\end{lemma}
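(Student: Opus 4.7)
The plan is to show that the combinatorial intersection pattern of $\varphi^{-1}(\Sigma)$ with the reference binding arc system $A_0\cup\beta$ is determined by the homotopy classes of $\Sigma$ and $\varphi$, and that this data suffices to reconstruct $\varphi^{-1}(\Sigma)$ up to isotopy of $S_g^1$.

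First, I would establish that the intersection points on $A_0\cup\beta$ and their labels by spheres are canonical. Fix a lift $\widetilde{\varphi}\colon\widetilde{S_g^1}\to\widetilde{W}_n$ of $\varphi$. Since $\varphi(\beta)$ is in minimal position with respect to $\Sigma$, Lemma~\ref{lem:existence-uniqueness-minimal-intersection-curves} determines the cyclic sequence of components of $\widetilde{\Sigma}$ crossed by the lift of $\varphi(\beta)$. For each $a\in A_0$, condition i) of tight minimal position lets Lemma~\ref{lem:minpos-arcs} apply to $\varphi(a)$ relative to $\varphi(\beta)$, so the sequence of complementary components of $\widetilde{\Sigma}$ crossed by the chosen lift $\widetilde{\varphi(a)}$ is likewise determined by the homotopy classes of $\varphi(a)$ and $\Sigma$. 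If $\varphi(a)$ is disjoint from $\Sigma$, condition ii) pins down which complementary component $\widetilde{\varphi(a)}$ lies in. Pulling back, on each $a\in A_0$ and on $\beta$ we obtain a canonical ordered set of marked points, each labeled by the specific sphere of $\widetilde{\Sigma}$ above it.

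Second, I would extract the pairing of marked points into arc-ends. Each component of $\varphi^{-1}(\Sigma)$ is a properly embedded arc on $S_g^1$ whose preimage in $\widetilde{S_g^1}$ splits into components, each of which equals $\widetilde{\varphi}^{-1}(\widetilde{\sigma})$ for a single sphere $\widetilde{\sigma}\in\widetilde{\Sigma}$. Consequently, two marked points on $A_0\cup\beta$ belong to the same component of $\varphi^{-1}(\Sigma)$ if and only if their chosen lifts lie on the preimage of the same sphere in $\widetilde{\Sigma}$. Since the sphere labeling is already determined in step one, the pairing of marked points into arc-ends is determined by the homotopy classes of $\Sigma$ and $\varphi$ alone.

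Finally, since $A_0$ is a maximal binding arc system, $S_g^1\setminus A_0$ is a disjoint union of polygonal disks. In each such disk, the intersection with $\varphi^{-1}(\Sigma)$ is a collection of pairwise disjoint properly embedded chords whose endpoints and non-crossing pairing on the boundary are now determined. Such a chord diagram in a disk is unique up to ambient isotopy fixing the boundary, so assembling the isotopies disk-by-disk shows that $\varphi^{-1}(\Sigma)$ is determined up to isotopy of $S_g^1$. I expect the main obstacle to be step two: verifying cleanly that the \emph{same-lifted-sphere} relation on marked points, extracted from Lemmas~\ref{lem:existence-uniqueness-minimal-intersection-curves} and~\ref{lem:minpos-arcs}, coincides with the arc-connectedness relation on $S_g^1$. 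This requires that $A_0\cup\beta$ cut each component of $\varphi^{-1}(\Sigma)$ into precisely the segments recorded by the marked points, which is exactly what the ribbon/minimal-position setup of $\varphi$ provides.
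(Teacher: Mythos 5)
Your outline mirrors the paper's overall strategy fairly closely -- both arguments decompose $S_g^1$ along the fixed maximal system $A_0$, record which spheres the lifts of $\varphi(\beta)$ and $\varphi(A_0)$ cross (via Lemmas~\ref{lem:existence-uniqueness-minimal-intersection-curves} and~\ref{lem:minpos-arcs}), and then try to reconstruct the arc system from that data -- and your final ``chord diagram in each complementary disk'' step is a legitimate alternative to the paper's pair-of-pants argument for recovering the isotopy class. But there is a genuine gap at step two, and your diagnosis of it understates what is missing.

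The claim ``two marked points belong to the same component of $\varphi^{-1}(\Sigma)$ iff their lifts lie on (the preimage of) the same sphere in $\widetilde{\Sigma}$'' is not automatic, and in fact is literally false without further work: a single complementary disk $P$ of $A_0\cup\beta$, lifted via $\widetilde{\varphi}$, can a priori meet a fixed lifted sphere $\widetilde{\sigma}$ in several disjoint intervals, producing four or more marked points on $\partial P$ all labeled by $\widetilde{\sigma}$, and then the non-crossing condition does not uniquely determine the pairing (for cyclically ordered labels $\widetilde{\sigma},\widetilde{\sigma},\widetilde{\sigma},\widetilde{\sigma}$ both non-crossing matchings are combinatorially possible). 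The statement you need, and which the paper proves as its central technical observation, is that every lift of a complementary ``hexagon disk'' of $A_0\cup\beta$ meets each component of $\widetilde{\Sigma}$ in \emph{at most one} interval; equivalently, each sphere label occurs at most twice on $\partial P$, which then forces the pairing. This is precisely where the embeddedness of $\varphi(\beta)$ and $\varphi(A_0)$ enters (the paper argues that four consecutive intersection points on the boundary of a lifted hexagon would give four pairwise disjoint lifts of sides, which cannot all bound a hexagon). Your closing remark that the needed compatibility is ``exactly what the ribbon/minimal-position setup provides'' is therefore off target: ribbon and minimal position give transversality and the uniqueness of the crossing sequences, but not the one-interval-per-sphere-per-hexagon bound, which requires a separate argument. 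A secondary imprecision is the phrase ``each of which equals $\widetilde{\varphi}^{-1}(\widetilde{\sigma})$'': a lifted component is only \emph{a} component of $\widetilde{\varphi}^{-1}(\widetilde{\sigma})$, and the possibility of several such components in one lifted disk is exactly the issue above.
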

In the situation described in the lemma we call $\varphi^{-1}(\Sigma)$
the \emph{induced arc system}.
\begin{proof}
  To clear up notation, we will refer to $(A_0)$-tight minimal
  position simply as tight minimal position in this proof, considering
  $A_0$ as fixed.
  By Lemma~\ref{lem:existence-uniqueness-minimal-intersection-curves},
  the homotopy classes of $\Sigma$ and $\varphi$ determine a unique
  minimal position of $\varphi(\beta)$.
  By Lemma~\ref{lem:minpos-arcs}, all $\varphi(a)$ ($a\in A_0$) which intersect
  $\Sigma$ also have a unique minimal position (in the sense of which
  complementary components of $\widetilde{W}_n$ they cross). By condition ii) of
  tight minimal position the same is true for those $\varphi(a)$ which
  are disjoint from $\Sigma$.

\medskip
  These uniqueness statements do not yet suffice to immediately show that
  $\varphi^{-1}(\Sigma)$ is also determined. The missing piece of data
  is which intersection points of $\varphi(\beta)$
  with $\Sigma$ are joined by arcs in $\varphi(S_g^1)$.

  The strategy of the proof is therefore first to show that (assuming tight
  minimal position) this matching of intersection points is determined
  by the homotopy class of $\Sigma$. This involves
  the study of how images of the complementary pieces of $A_0$ can
  lift to the universal cover of $W_n$.
  Then, one can use the uniqueness of the minimal position of
  $\varphi(\beta)$ to reconstruct the arc system
  $\varphi^{-1}(\Sigma)$ out of the isotopy 
  class of $\Sigma$. We now give the formal details.

\medskip
  A \emph{hexagon disk} $H$ is the image under $\varphi$ of a disk bounded
  by three segments of $\beta$ and three arcs in $A_0$. Since
  $\varphi$ is an embedding, $\varphi(S_g^1)$ is a union of hexagon
  disks which only intersect in their boundaries. In other words,
  $\varphi(S_g^1)\setminus(\varphi(A_0)\cup\varphi(\beta))$ is a
  disjoint union of the interiors of the hexagon disks.
  We first show that every lift $\widetilde{H}$ of a hexagon disk to
  the universal cover $\widetilde{W}_n$ intersects each
  lift $\widetilde{\sigma}$ of a sphere in $\Sigma$ in at most one interval.

  Namely, suppose not. Let $p_1,p_2,p_3,p_4$ be four intersection
  points of the boundary of $\widetilde{H}$ with
  $\widetilde{\sigma}$. For each $i=1,\ldots,4$ let $c_i$ be a lift of the
  corresponding arc in $\varphi(A_0)$ or curve $\varphi(\beta)$
  through $p_i$. 
  Note that since both the arc system $\varphi(A_0)$ and the curve
  $\varphi(\beta)$ are in minimal position, all four of these lifts
  are distinct (each lift may intersect the sphere
  $\widetilde{\sigma}$ at most once).

  Actually, no two of the $c_i$ may intersect in the universal cover
  $\widetilde{W}_n$. Namely, suppose that $c_1$ and $c_2$ intersect.
  Since both $\varphi(\beta)$ and $\varphi(A_0)$ are embedded,
  this means (up to relabeling) that $c_1$ is a lift of
  $\varphi(\beta)$ and $c_2$ is a lift of a component $a$ of $\varphi(A_0)$.
  Let $q$ be the intersection point of $c_1$ and $c_2$.

  Then there is a subarc of $c_1 \cup c_2$ which connects
  $\widetilde{\sigma}$ to itself. This contradicts minimal position of
  $a$ with respect to $\Sigma$.

  Thus, the four $c_i$ are disjoint. The hexagon disk $H$ lifts
  homeomorphically to a disk $\widetilde{H}$ in $\widetilde{W}_n$
  whose boundary is the union of six intervals, four of which would be
  disjoint (as they are contained in the different $c_i$). This is
  clearly impossible. 

\medskip  
  Thus, the lift $\widetilde{H}$ of a hexagon disk
  $H$ intersects the lift $\widetilde{\sigma}$ (in a single arc) if
  and only if one (hence two) of its sides do.

  Note that this condition depends only on the homotopy classes of $\Sigma$
  and $\varphi$ due to the uniqueness of minimal position for curves
  and arcs.

  Hence, the homotopy classes of $\varphi$ and $\Sigma$ determine
  which intersection points of $\varphi(\beta)$ with $\Sigma$ are
  joined by an interval in $\varphi(S_g^1)$ -- namely exactly those
  for which there is a sequence of hexagon disks connecting them.
  This is the desired uniqueness of how the intersection points of
  $\Sigma$ with $\varphi(\beta)$ are matched.

  \medskip
  Let now $x$ and $y$ be two such points which are connected on the
  sphere $\sigma$ by an arc in $\varphi(S_g^1)$.
  Denote by $\beta^1$ and $\beta^2$ the two subarcs of
  $\varphi(\beta)$ defined by these 
  intersection points. The homotopy classes of these subarcs (with
  endpoints sliding on the corresponding sphere of $\Sigma$) are
  completely determined by the sequence of spheres in $\Sigma$ they
  intersect, and thus they are determined by the isotopy class of
  $\Sigma$ and the homotopy class of $\varphi$.

  Let $a\subset S_g^1$ be the preimage of the arc on $\sigma$
  connecting $x$ and $y$.
  The boundary of a regular neighborhood of $\beta\cup a$ in $S_{g}^1$  
  is the union of two simple closed curves
  $d^1,d^2$ and the boundary curve
  $\beta$.

  Up to exchanging $d^1$ and $d^2$, 
  the curve $\varphi(d^k)\subset W_n$ is freely homotopic 
  to a curve $\delta^k = \beta^k * \alpha$ obtained by concatenating
  $\beta^k$ and an embedded arc $\alpha$ on $\sigma$.

  Since $\sigma$ is simply connected, the free homotopy classes of
  the curves $\delta^k$ are thus also determined by the isotopy class of
  $\Sigma$ and the homotopy class of $\varphi$.

  Since $\varphi$ induces an isomorphism on the
  level of fundamental groups, this implies that also 
  the simple closed curves $d^k$ are determined by this data.

  The curves $\beta$, $d^1$ and $d^2$ bound a pair of pants $P$ on
  $S_g^1$. The arc $a$ is up to isotopy the unique essential embedded
  arc in $P$ connecting $\beta$ to itself.
  Thus the isotopy class of the arc $a$ is determined by the isotopy class of
  $\Sigma$ and the homotopy class of $\varphi$. 

  Since this argument applies to all arcs
  $a\subset\varphi^{-1}(\Sigma)$, this proves the desired uniqueness.
\end{proof}

We now have collected all the necessary tools to prove the main
theorem of this section.
\begin{proof}[Proof of Theorem~\ref{thm:map-in-out}]
  To prove the theorem, we fix a maximal binding arc system $A_0$, and
  orientations of the boundary $\beta$ as well as each arc $a\in A_0$
  as before. Let $\Sigma_0$ be the sphere system obtained by doubling
  the arc system $A_0$ (as described in the last paragraph of
  Section~\ref{sec:bundles}) 

  We define a $1$--Lipschitz projection $P$ of the graph of simple sphere
  systems to the graph of binding arc systems as follows.

  For a simple sphere system $\Sigma$, modify $\varphi_0$ by a homotopy
  to a map $\varphi$ in $(A_0)$-tight minimal position, and let $P(\Sigma) =
  \varphi^{-1}(\Sigma)$ be the induced 
  binding arc system. By Lemma~\ref{lem:uniqueness-tight-position} the
  result is determined by the homotopy class of $\Sigma$.

  This map is $1$-Lipschitz since disjoint sphere systems are mapped
  to disjoint binding arc systems. Namely, apply
  Lemma~\ref{lem:existence-tight-pos} to the union of two
  disjoint sphere systems to see that there is a simultaneous
  tight position for both of them.

  Let now $f\in\Map(S_{g,1})$ be given and let $\Sigma=\iota(f)(\Sigma_0)$.
  By doubling a representative $I(f)$ we find that $f(A_0)$ is the
  intersection of $I(f)(\Sigma)$ and $\varphi$. In particular, it is
  in tight position.
  Thus, $P$ restricts to the identity on the graph of binding arc
  systems. This shows the theorem.
\end{proof}

\subsection{Arc graphs}
\label{sec:arcgraph}
The method employed in the proof of Theorem \ref{thm:map-in-out} can
also be used to relate the arc graph of a punctured surface to the
sphere graph of $W_n$.

To be precise, recall that the \emph{arc graph $\mathcal{AG}(S_g^1)$
  of $S_g^1$} is the graph whose vertex set is the set of isotopy classes
of embedded essential arcs connecting the boundary of $S_g^1$ to
itself. 
Again, isotopies are only required to fix the boundary component setwise.
Two such vertices are joined by an edge if the corresponding
arcs can be embedded disjointly. 
Similarly,
define the \emph{sphere graph $\mathcal{SG}(W_n)$ of $W_n$} to be the graph whose vertex set
is the set of isotopy classes of essential $2$-spheres in $W_n$. Two
such vertices are connected by an edge if the corresponding 
spheres can be realized disjointly.

Let $a$ be an arc representing a vertex of the arc graph of $S_g^1$.
The interval bundle over $a$ is a disk $D(a)$ in the handlebody
$U_{2g}=S_g^1\times [0,1]$. 
The isotopy class of this disk only depends on the isotopy class
of $a$, since the Dehn twist around the boundary of $S_g^1$ is
contained in the kernel of the map $\Map(S_g^1)\to\Map(U_{2g})$.
We let $\sigma(a)$ be the essential sphere in $W_n$ which is obtained by
doubling $D(a)$ along $\partial U_{2g}$.

The following lemma is folklore, but since we were not able
to find a proof in the literature, we include a proof.
\begin{lemma}
  The construction above identifies the arc graph of $S_g^1$ with a
  subgraph of the sphere graph of $W_n$.   
\end{lemma}
\begin{proof}
  The only thing that requires an argument is the injectivity of the
  map. First note that $\sigma(a)$ is nonseparating if and only if $a$
  is nonseparating. 

  Let $a, a'$ be two non-isotopic, nonseparating arcs on
  $S_g^1$. Let $G$ (resp. $G'$) be the corank--$1$ subgroup of $\pi_1(S_{g,1},p)$
  of those loops which are disjoint from $a$ (resp. $a'$). Since $a$
  and $a'$ are non-isotopic, we claim that these groups are not
  equal. Namely, if a 
  loop can be made disjoint from $a$ and $a'$ individually, then it
  can also be made disjoint from $a \cup a'$. Since the rank of the
  fundamental group of the complement decreases strictly when adding
  more arcs, the claim follows.

  Since the map $\varphi_0$ induces an isomorphism on $\pi_1$, the image groups
  $(\varphi_0)_*(G), (\varphi_0)_*(G')$ are therefore also different corank--$1$ subgroups.
  The subgroup of loops in $W_n$ which can be made disjoint from $\sigma(a)$ 
  (resp. $\sigma(a')$) is a corank--$1$ subgroup which contains $(\varphi_0)_*(G)$, and
  is therefore equal to $(\varphi_0)_*(G)$ (resp. $(\varphi_0)_*(G')$).

  Hence, $\sigma(a)$ and $\sigma(a')$ are not isotopic, since that would
  imply $G=G'$.

  The case of non-isotopic separating arcs $a,a'$ can be proved
  similarly by considering both complementary components
  simultaneously (instead of a corank--$1$ subgroup one then considers
  a free splitting of the fundamental group).
\end{proof}

\begin{prop}\label{prop:arcgraph}
  The arc graph of $S_g^1$ is a 1-Lipschitz retract of the sphere
  graph of $W_n$. In particular, it is undistorted.
\end{prop}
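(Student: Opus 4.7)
The strategy parallels the construction in the proof of Theorem~\ref{thm:map-in-out}, but applied to individual spheres and arcs rather than full sphere systems. The plan is to define a projection $P$ from the sphere graph of $W_n$ to the arc graph of $S_g^1$ and verify that it is $1$-Lipschitz and restricts to the identity on the embedded arc graph. Given an essential sphere $\sigma$ in $W_n$, I would first put the standard embedding $\varphi_0:S_g^1\to W_n$ into tight minimal position with respect to $\sigma$, regarded as a one-element sphere system. The proofs of Lemma~\ref{lem:existence-tight-pos} and Lemma~\ref{lem:uniqueness-tight-position} do not in an essential way use that the ambient sphere system is simple; they should apply to yield a preimage $\varphi_0^{-1}(\sigma)$ which is a disjoint union of essential properly embedded arcs on $S_g^1$ whose isotopy class is determined by the isotopy class of $\sigma$. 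The projection $P(\sigma)$ can then be defined as any arc of $\varphi_0^{-1}(\sigma)$; canonicity may be achieved by using the fixed orientation of $\beta$ to select, say, the arc whose initial endpoint on $\beta$ comes first.

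For the $1$-Lipschitz property, suppose $\sigma_1,\sigma_2$ are two disjoint essential spheres. Applying the existence of tight minimal position to the sphere system $\{\sigma_1,\sigma_2\}$ yields an embedding homotopic to $\varphi_0$ which is simultaneously in tight minimal position with both spheres. Then $\varphi_0^{-1}(\sigma_1)$ and $\varphi_0^{-1}(\sigma_2)$ are disjoint arc systems on $S_g^1$, so the arcs chosen to represent $P(\sigma_1)$ and $P(\sigma_2)$ may be taken to be disjoint. For the retract property, observe that $\sigma(a)$ is constructed by doubling the product disk $D(a)=a\times[0,1]$ along $\partial V$, so $\varphi_0^{-1}(\sigma(a))$ consists of exactly the single arc $a$; this position is already tight and minimal, and hence $P(\sigma(a))=a$.

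I expect the principal obstacle to be verifying that the existence and uniqueness statements for tight minimal position genuinely extend from simple sphere systems to the case of a single (possibly non-simple) sphere. In particular, the hexagon-disk and pair-of-pants analysis at the end of the proof of Lemma~\ref{lem:uniqueness-tight-position} uses both the binding arc system $A_0$ and the curve $\beta$, and one must check that none of the steps there secretly requires $\varphi_0^{-1}(\sigma)$ itself to be a binding arc system on $S_g^1$. A related minor point is to confirm that the arcs in $\varphi_0^{-1}(\sigma)$ are essential as arcs on $S_g^1$: any inessential arc would, in ribbon position, give a bigon permitting a homotopy that reduces $|\varphi_0^{-1}(\sigma)|$, contradicting minimal position and so ruling this out. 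Once these two points are dispatched, the argument is a direct transcription of the corresponding parts of the proof of Theorem~\ref{thm:map-in-out}.
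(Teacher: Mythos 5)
Your proposal deviates from the paper's argument in a way that creates a genuine gap. You attempt to put $\varphi_0$ in tight minimal position with respect to the \emph{single} sphere $\sigma$, treated as a one-element sphere system, and you yourself flag that it is not clear the existence and uniqueness machinery of Lemmas~\ref{lem:existence-tight-pos} and~\ref{lem:uniqueness-tight-position} extends to this setting. It does not, at least not without substantial new work. Throughout Section~\ref{sec:ribb-minim-posit} the proofs use in an essential way that $\Sigma$ is a \emph{simple} sphere system: the complement $W_\Sigma$ is simply connected (which underlies the tree-dual argument for minimal position of curves and arcs), and the pieces $P_i = \varphi(S_g^1)\cap W_\Sigma$ are disks (which is what makes the decorated ribbon-graph machinery and the Whitehead-move surgeries available). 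For a single sphere $\sigma$, the complement is not simply connected, the pieces $P_i$ are surfaces of positive genus, $\varphi_0^{-1}(\sigma)$ need not be binding, and the hexagon-disk argument in Lemma~\ref{lem:uniqueness-tight-position} has no obvious analogue. You explicitly label this a ``principal obstacle'' but do not dispatch it; that is precisely the place where your argument stops short.

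The paper avoids this entirely by a different decomposition of the problem: given $\sigma$, first \emph{extend} it to a simple sphere system $\Sigma$, apply the already-established tight-minimal-position theory for $\Sigma$, and define $a(\sigma)\subset P(\Sigma)$ as the preimage of $\sigma$ in the induced binding arc system. The additional step is then to show that $a(\sigma)$ is independent of the choice of extension. This is handled by the connectedness of $\mathcal{S}(W_n,\sigma)$ established in Section~\ref{sec:spheres}: any two extensions differ by a sequence of additions and removals of spheres other than $\sigma$, and (by the same reasoning as in Theorem~\ref{thm:map-in-out}) each such move does not change the preimage of $\sigma$. The Lipschitz property is then obtained by choosing a simple sphere system $\Sigma$ containing both $\sigma_1$ and $\sigma_2$ when they are disjoint, which keeps everything inside the simple-sphere-system framework. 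This is the key idea your proposal is missing; without it, the reduction to the earlier lemmas is not valid as stated.
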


\begin{proof}[Proof of Proposition~\ref{prop:arcgraph}]
  We define the Lipschitz retraction in a similar way as in the proof
  of Theorem~\ref{thm:map-in-out}. 

  Let $\sigma$ be an essential sphere in $W_n$. Extend
  $\sigma$ to a simple sphere system $\Sigma$. Put $\Sigma$ in tight
  minimal position with respect to $\varphi$.
  Let $a(\sigma) \subset P(\Sigma)$ be the part of the induced arc
  system which is the preimage of $\sigma$. Note that this is a
  nonempty set of essential arcs. Namely, if $a(\Sigma)$ were empty,
  then the full fundamental group of $S_g^1$ would inject in the
  fundamental group of the complement of $\sigma$, which is impossible
  since $\sigma$ is essential.

  We claim that $a(\sigma)$ does not depend on the choice of
  $\Sigma$. Any two possible choices $\Sigma,\Sigma'$ of extensions
  differ by a sequence of moves, each of which adds or removes a
  sphere different from $\sigma$. This is an immediate consequence of
  the fact that the graph $\mathcal{S}(W_n,\sigma)$ defined in
  Section~\ref{sec:spheres} is connected.

  Now, arguing as in the proof of Theorem~\ref{thm:map-in-out}, each
  such move does not change the preimage of $\sigma$ in $P(\Sigma)$.
  Thus, $a(\Sigma)$ is a well-defined arc in $S_g^1$, and $a$ defines
  a map from the sphere graph of $W_n$ to the arc graph of $S_g^1$. It
  is clear that this map restricts to the identity on the arc graph.

  If $\sigma_1$ and $\sigma_2$ are two disjoint essential spheres,
  then one can find a simple sphere system $\Sigma$ containing both
  $\sigma_1$ and $\sigma_2$. Thus, $a(\sigma_1)$ and $a(\sigma_2)$ are
  both contained in $P(\Sigma)$ and thus in particular disjoint.
  This shows that $a$ is $1$-Lipschitz.
\end{proof}

\end{document}